\newtheorem{theorem}{Theorem}[section]
\newtheorem{lemma}[theorem]{Lemma}
\newtheorem{proposition}[theorem]{Proposition}
\newtheorem{corollary}[theorem]{Corollary}
\newtheorem{remark}[theorem]{\it Remark}
\newcounter{paraga}[section]
\renewcommand{\theparaga}{{\bf\arabic{paraga}.}}
\newcommand{\paraga}{\medskip \addtocounter{paraga}{1} 
\noindent{\theparaga\ } }
\begin{document}

\def\MP{\,{<\hspace{-.5em}\cdot}\,}
\def\SP{\,{>\hspace{-.3em}\cdot}\,}
\def\PM{\,{\cdot\hspace{-.3em}<}\,}
\def\PS{\,{\cdot\hspace{-.3em}>}\,}
\def\EP{\,{=\hspace{-.2em}\cdot}\,}
\def\PP{\,{+\hspace{-.1em}\cdot}\,}
\def\PE{\,{\cdot\hspace{-.2em}=}\,}
\def\N{\mathbb N}
\def\C{\mathbb C}
\def\Q{\mathbb Q}
\def\R{\mathbb R}
\def\T{\mathbb T}
\def\A{\mathbb A}
\def\Z{\mathbb Z}
\def\demi{\frac{1}{2}}

\begin{titlepage}
\author{Abed Bounemoura~\footnote{abedbou@gmail.com, Centre de Recerca Matemàtica, Campus de Bellaterra, Edifici C, 08193, Bellaterra}}
\title{\LARGE{\textbf{Normal forms, stability and splitting of invariant manifolds I. Gevrey Hamiltonians}}}
\end{titlepage}

\maketitle

\begin{abstract}
In this paper, we give a new construction of resonant normal forms with a small remainder for near-integrable Hamiltonians at a quasi-periodic frequency. The construction is based on the special case of a periodic frequency, a Diophantine result concerning the approximation of a vector by independent periodic vectors and a technique of composition of periodic averaging. It enables us to deal with non-analytic Hamiltonians, and in this first part we will focus on Gevrey Hamiltonians and derive normal forms with an exponentially small remainder. This extends a result which was known for analytic Hamiltonians, and only in the periodic case for Gevrey Hamiltonians. As applications, we obtain an exponentially large upper bound on the stability time for the evolution of the action variables and an exponentially small upper bound on the splitting of invariant manifolds for hyperbolic tori, generalizing corresponding results for analytic Hamiltonians.
\end{abstract}
 
\section{Introduction and main results}\label{s1}

\paraga Let $n\geq 2$ be an integer, $\T^n=\R^n/\Z^n$ and $B_R$ be the closed ball in $\R^n$, centered at the origin, of radius $R>0$ with respect to the supremum norm. For $\varepsilon\geq 0$, we consider a near-integrable Hamiltonian of the form
\begin{equation*}
\begin{cases} 
H(\theta,I)=h(I)+f(\theta,I) \\
|f| \leq \varepsilon <\!\!<1
\end{cases}
\end{equation*}
where $(\theta,I) \in \mathcal{D}_R=\T^n \times B_R$ are angle-action coordinates for the integrable part $h$ and $f$ is a small perturbation in some suitable topology defined by a norm $|\,.\,|$. The phase space $\mathcal{D}_R$ is equipped with the symplectic structure induced by the canonical symplectic structure on $\T^n \times \R^n=T^*\T^n$. For $\varepsilon=0$, the system is trivially integrable: the action variables are integrals of motion and their level sets $\{I=I_0\}$, $I_0 \in B_R$, are invariant embedded tori on which the flow is linear with frequency $\nabla h(I_0)$. An important subject of study is the dynamical properties of these systems when $\varepsilon>0$. 

Here we will be interested in the local dynamics of the perturbed system around an invariant torus of the integrable system, which, without loss of generality, we assume is located at $I=0$. The qualitative and quantitative properties of this invariant torus with a linear flow is then determined by its frequency vector $\omega=\nabla h(0) \in \R^n$. Let us say that a vector subspace of $\R^n$ is rational if it has a basis of vectors with rational (or equivalently, integer) components, and we let $F=F_\omega$ be the smallest rational subspace of $\R^n$ containing $\omega$. If $F=\R^n$, the vector $\omega$ is said to be non-resonant and the dynamics on the invariant torus $\mathcal{T}_0=\{I=0\}$ is then minimal and uniquely ergodic. If $F$ is a proper subspace of $\R^n$ of dimension $d$, the vector $\omega$ is said to be resonant and $d$ (respectively $m=n-d$) is the number of effective frequencies (respectively the multiplicity of the resonance): the invariant torus $\mathcal{T}_0$ is then foliated into invariant $d$-dimensional tori (which are just images of translates of $F$ under the canonical projection $\pi_0 : \R^n \rightarrow \mathcal{T}_0$) on which the dynamics is again minimal and uniquely ergodic. The example of a resonant vector to keep in mind, and to which the general case can be actually reduced, is $\omega=(\varpi,0)\in \R^n=\R^d \times \R^m$ with $\varpi \in \R^d$ non-resonant, for which $F=\R^d \times \{0\}$. From now on, we will assume that $1\leq d \leq n$ (the case $d=0$ corresponds to an invariant torus which consists uniquely of equilibrium solutions), and the particular case $d=1$ will play a special role: in this case, writing $\omega=v$, the torus is foliated by periodic orbits and if $T$ denotes the minimal common period, then $Tv \in \Z^n$, and such a vector $v$ will be called periodic.

In order to study the local dynamics of the perturbed system around such an invariant torus, it is important to quantify the minimal character of this linear flow with frequency $\omega$ (on the whole invariant torus if $d=n$ or on each leaf if $d<n$). In general, to such a vector $\omega$ one can associate a constant $Q_\omega>0$ and a real-valued function $\Psi_\omega$ defined for all real numbers $Q\geq Q_\omega$, which is non-decreasing and unbounded,  by
\begin{equation}\label{psi}
\Psi_\omega(Q)=\max\left\{|k\cdot\omega|^{-1} \; | \; k\in \Z^n \cap F , \; 0<|k|\leq Q \right\}
\end{equation}
where $\cdot$ denotes the Euclidean scalar product and $|\,.\,|$ is the supremum norm for vectors (see for instance \cite{BF12} for much more detailed information, where this function is denoted by $\Psi'_\omega$). Note that by definition, we have
\[ |k\cdot\omega|\geq \frac{1}{\Psi_\omega(Q)}, \quad k\in \Z^n \cap F, \; 0<|k|\leq Q. \]
Let us say that a vector $\omega$ is Diophantine if there exist constants $\gamma>0$ and $\tau \geq d-1$ such that $\Psi_\omega(Q)\leq \gamma^{-1}Q^\tau$, and let us denote by $\Omega_d(\gamma,\tau)$ the set of such vectors. For $d=1$, $\omega=v$ is $T$-periodic and $F=F_v$ is just the real line generated by $v$, so that for any non-zero vector $k \in \Z^n \cap F$, as $k$ is collinear to $v$ and $k$ and $Tv$ are non-zero integer vectors, we have
\[  |k.v|^{-1}=|k|^{-1}|v|^{-1}=T|k|^{-1}|Tv|^{-1}\leq T \]
hence $\Psi_v(Q)\leq T$ and therefore any $T$-periodic vector belongs to $\Omega_1(T^{-1},0)$.

\paraga For an analytic Hamiltonian system, it is well-known that in the neighbourhood of an unperturbed invariant torus with a Diophantine frequency vector, the system can be analytically conjugated to a simpler system where the perturbation has been split into two parts: a resonant part, which captures the important features of the system and whose size is still comparable to $\varepsilon$, and a non-resonant part, whose size can be made exponentially small with respect to $\varepsilon^{-a}$, where the exponent $a>0$ depends only on the Diophantine exponent $\tau$. The result can also be extended to an arbitrary vector $\omega\in \R^n$, in which case the non-resonant part is exponentially small with respect to some function of $\varepsilon^{-1}$, this function depending only on $\Psi_\omega$ (see \cite{Bou12} for $d=n$). Such simpler systems are usually called resonant formal forms, and were first obtained in this context by Nekhoroshev (see \cite{Nek77}, or \cite{Pos93} for a nicer exposition) with the aim of establishing exponentially long (and global) stability estimates for the evolution of the action variables, under some geometrical assumptions on the integrable part. Apart from deriving such stability estimates, these resonant normal forms can also be used to prove the existence of invariant hyperbolic objects such as tori or cylinders of lower dimension. For hyperbolic tori (also called whiskered tori), which by definition possess stable and unstable manifolds (also known as whiskers), these normal forms can be used to estimate the ``angle" between these invariant manifolds at an intersection point: this is usually called the splitting of invariant manifolds (or ``separatrices"). 

The aim of this paper is to construct resonant normal forms with a small remainder in the broader class of Gevrey Hamiltonians, for an arbitrary frequency vector $\omega$. The case $d=1$ is known and our main theorem extends this result for any $1\leq d \leq n$. Our proof uses a method of periodic approximations: it is based on the simple case $d=1$, a Diophantine result on the approximation of a vector by independent periodic vectors proved in \cite{BF12} and a technique of composition of periodic averaging first used in \cite{BN09}. Note that our main result answers a question which was asked in \cite{LMS03}, concerning the ``interaction of Gevrey conditions with arithmetic properties in normal forms". In a second paper \cite{BouII12}, we will prove the corresponding result for finitely differentiable Hamiltonians using a similar method (but with fairly different technical details).

We give two applications of our main result. As a first application, we prove local and exponentially long stability estimates for the evolution of the action variables for perturbations of non-linear integrable systems, without any condition on the integrable part. We also prove global and exponentially long stability estimates for perturbations of integrable linear systems, and show that the latter result is essentially optimal. If one is interested in global stability estimates for perturbations of non-linear integrable systems, one needs to impose a geometric condition on the integrable part. It is known that for convex integrable systems, an essentially optimal result can be obtained using only the case $d=1$ (see \cite{Loc92} for analytic systems, \cite{MS02} for Gevrey systems and \cite{Bou10} for finitely differentiable systems). However, it seems that the general case $1\leq d \leq n$ is needed if one is interested in such global estimates for non-convex integrable Hamiltonians. In \cite{BN09} and \cite{Bou11}, special normal forms adapted to the problem were constructed using only periodic approximations: however, the quantitative results obtained there were very far from being optimal, and in particular our main theorem in this paper gives much more general and precise normal forms that should be useful in trying to also obtain essentially optimal results for perturbations of non-convex integrable systems.  As a second application, we will prove an exponentially small upper bound for the splitting of invariant manifolds for a hyperbolic torus.

\paraga Let us now state precisely our results, starting with the regularity assumption. Recall that $n \geq 2$ and $R>0$ have been fixed, and that our phase space is $\mathcal{D}_R=\T^n \times B_R$, where $B_R$ is the closed ball in $\R^n$, centered at the origin, of radius $R$ with respect to the supremum norm. 

Given $\alpha \geq 1$ and $L>0$, a real-valued function $f\in C^{\infty}(\mathcal{D}_R)$ is $(\alpha,L)$-Gevrey if, using the standard multi-index notation, we have
\begin{equation}\label{norm}
|f|_{G^{\alpha,L}(\mathcal{D}_R)}=\sup_{l\in \N^{2n}}|f|_{\alpha,L,l,R} < \infty, \quad |f|_{\alpha,L,l,R}=L^{|l|\alpha}(l!)^{-\alpha}|\partial^l f|_{C^{0}(\mathcal{D}_R)} 
\end{equation}
where $|\,.\,|_{C^{0}(\mathcal{D}_R)}$ is the usual supremum norm for functions on $\mathcal{D}_R$. The space of such functions, with the above norm, is a Banach space that we denote by $G^{\alpha,L}(\mathcal{D}_R)$. In the sequel, we shall forget the dependence on the domain in the notation and simply write $|\,.\,|_{\alpha,L}=|\,.\,|_{G^{\alpha,L}(\mathcal{D}_R)}$ and $|\,.\,|_{\alpha,L,l}=|\,.\,|_{\alpha,L,l,R}$. Next, for $p\geq 1$, a vector-valued function $F\in C^{\infty}(\mathcal{D}_R,\R^p)$ is $(\alpha,L)$-Gevrey if $F=(f_1,\dots,f_p)$ with $f_i \in G^{\alpha,L}(\mathcal{D}_R)$ for $1\leq i \leq p$. The space of such functions, that we denote by $G^{\alpha,L}(\mathcal{D}_R,\R^p)$, is also a Banach space with the norm
\[ |F|_{G^{\alpha,L}(\mathcal{D}_R,\R^p)}=\max_{1\leq i \leq p}|f_i|_{G^{\alpha,L}(\mathcal{D}_R)}.\]  
Similarly, we shall simply write $|\,.\,|_{\alpha,L}=|\,.\,|_{G^{\alpha,L}(\mathcal{D}_R,\R^p)}$. By identifying functions with values in $\T$ with $1$-periodic real-valued functions, we can also define $G^{\alpha,L}(\mathcal{D}_R,\T^n \times \R^n)$ and $|\,.\,|_{\alpha,L}=|\,.\,|_{G^{\alpha,L}(\mathcal{D}_R,\T^n \times \R^n)}$. Finally, we will also use the notation $G^{\alpha}=\bigcup_{L>0}G^{\alpha,L}$ to denote the space of functions which are $(\alpha,L)$-Gevrey for some $L>0$, on appropriate domains.

Note that analytic functions are a particular case of Gevrey functions, as one can check that $G^{1,L}(\mathcal{D}_R)$ is exactly the space of bounded real-analytic functions on $\mathcal{D}_R$ which extend as bounded holomorphic functions on the complex domain
\[ V_L(\mathcal{D}_{R})=\{(\theta,I)\in(\C^n/\Z^n)\times \C^{n} \; | \; |\mathcal{I}(\theta)|<L,\;d(I,B_R)<L\}, \] 
where $\mathcal{I}(\theta)$ is the imaginary part of $\theta$, $|\,.\,|$ the supremum norm on $\C^n$ and $d$ the associated distance on $\C^n$.

Since we will be only interested in local properties, it will be enough to consider first linear integrable Hamiltonians, that is $h(I)=l_\omega(I)=\omega\cdot I$. As we will see, our result for an arbitrary non-linear integrable Hamiltonian $h$ will be obtained from the linear case by a straightforward localization procedure. Therefore we shall first consider a Hamiltonian $H \in G^{\alpha,L}(\mathcal{D}_R)$ of the form
\begin{equation}\label{Hlin}
\begin{cases} \tag{G1}
H(\theta,I)=l_\omega(I)+f(\theta,I), \quad (\theta,I)\in \mathcal{D}_R, \\
|f|_{\alpha,L} \leq \varepsilon. 
\end{cases}
\end{equation}
We denote by $\{\,.\,,\,.\,\}$ the Poisson bracket associated to the symplectic structure on $\mathcal{D}_R$. For any vector $w \in \R^n$, let $X_w^t$ be the Hamiltonian flow of the linear integrable Hamiltonian $l_w(I)=w\cdot I$, and given any $g\in C^{1}(\mathcal{D}_R)$, we define the average (along the linear flow of frequency $w$) of $g$ by
\begin{equation}\label{ave}
[g]_w=\lim_{s\rightarrow +\infty}\frac{1}{s}\int_{0}^{s}g\circ X_{w}^{t}dt.
\end{equation}
Note that $\{g,l_w\}=0$ if and only if $g \circ X^t_w=g$ if and only if $g=[g]_w$. Recall that the function $\Psi_\omega$ has been defined in~\eqref{psi}, then we define the functions
\[ \Delta_\omega(Q)=Q\Psi_\omega(Q), \; Q \geq Q_\omega, \quad \Delta_{\omega}^*(x)=\sup\{Q \geq Q_\omega\; | \; \Delta_{\omega}(Q)\leq x\}, \; x \geq \Delta_\omega(Q_\omega). \]
Our first result is the following.

\begin{theorem}\label{thmlin}
Let $H$ be as in~\eqref{Hlin}. There exist positive constants $c$, $c_1$, $c_2$, $c_3$, $c_4$, $c_5$ and $C$ that depend only on $n,R,\omega,\alpha$ and $L$ such that if 
\begin{equation}\label{thr1}
\Delta_{\omega}^*(c\varepsilon^{-1}) \geq c_1,
\end{equation}
then there exists a symplectic map $\Phi \in G^{\alpha,\tilde{L}}(\mathcal{D}_{R/2},\mathcal{D}_R)$, where $\tilde{L}=CL$, such that 
\[ H \circ \Phi = l_\omega +[f]_\omega+ g + \tilde{f}, \quad \{g,l_\omega\}=0  \]
with the estimates 
\begin{equation}\label{estlindist}
|\Phi-\mathrm{Id}|_{\alpha,\tilde{L}} \leq c_2 \Delta_{\omega}^*(c\varepsilon^{-1})^{-1}
\end{equation} 
and
\begin{equation}\label{estlin}
|g|_{\alpha,\tilde{L}}\leq c_3 \varepsilon\Delta_{\omega}^*(c\varepsilon^{-1})^{-1}, \quad |\tilde{f}|_{\alpha,\tilde{L}}\leq c_4 \varepsilon \exp\left(-c_5 \Delta_{\omega}^*(c\varepsilon^{-1})^{\frac{1}{\alpha}}\right).
\end{equation}
\end{theorem}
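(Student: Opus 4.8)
\noindent\emph{Proof strategy.} Since the integrable part is already linear there is nothing to localise, and the whole point is to average out of $f$, along the flow of $l_\omega$, the Fourier modes $k\in\Z^n$ with $k\cdot\omega\ne 0$. The plan is to reduce this to a finite composition of the \emph{periodic} case $d=1$ (which is Theorem~\ref{thmlin} itself when $F$ is one-dimensional, and is the known result we are extending), by approximating $\omega$ by independent periodic vectors as in~\cite{BF12}. Set the threshold $Q:=\Delta_\omega^*(c\varepsilon^{-1})$, so that $Q\Psi_\omega(Q)=\Delta_\omega(Q)\le c\varepsilon^{-1}$ and, by~\eqref{thr1}, $Q$ exceeds any prescribed constant. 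Apply~\cite{BF12}: there are periodic vectors $v_1,\dots,v_d$ forming a basis of $F=F_\omega$, with periods $T_i$ of order $\Psi_\omega(Q)$ and approximating $\omega$ in the sharp sense $|\omega-v_i|$ of order $\Delta_\omega(Q)^{-1}$, in particular $|\omega-v_i|\lesssim\varepsilon$. As the $v_i$ span $F$, for $k\in\Z^n$ one has $k\cdot v_i=0$ for all $i$ iff $k\cdot\omega=0$; hence the resonant projections $P_ig:=[g]_{v_i}$ commute and $P_d\cdots P_1=[\,\cdot\,]_\omega$.

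The construction is an induction on $i=0,\dots,d$ producing symplectic maps $\Phi_i$, with $\Phi=\Phi_1\circ\cdots\circ\Phi_d$. Write the Hamiltonian after $i$ steps as $l_\omega+p_i$ and split $p_i=q_i+s_i$ with $q_i=P_i\cdots P_1\,p_i$ its $v_1,\dots,v_i$-resonant part, maintaining $|p_i|_{\alpha,L_i}\lesssim\varepsilon$ and $|s_i|_{\alpha,L_i}\lesssim\varepsilon\exp(-c_5Q^{1/\alpha})$, with $L_i\ge L/2$; initially $p_0=q_0=f$, $s_0=0$. At step $i$, apply the $d=1$ case along $v_i$ to $l_{v_i}+(l_{\omega-v_i}+q_{i-1})$, whose perturbation is Gevrey of size $\lesssim\varepsilon$ (using $|\omega-v_i|\lesssim\varepsilon$); since $v_i$ is $T_i$-periodic the relevant threshold reads $\varepsilon T_i\lesssim\varepsilon\Psi_\omega(Q)\lesssim Q^{-1}$, ensured by~\eqref{thr1}, and one gets $\Phi_i$ with $|\Phi_i-\mathrm{Id}|\lesssim\varepsilon T_i\lesssim Q^{-1}$ and
\[
\big(l_{v_i}+l_{\omega-v_i}+q_{i-1}\big)\circ\Phi_i=l_\omega+[q_{i-1}]_{v_i}+g_i'+\tilde f_i,\qquad\{g_i',l_{v_i}\}=0,
\]
with $|g_i'|\lesssim\varepsilon^2T_i\lesssim\varepsilon Q^{-1}$ and $|\tilde f_i|\lesssim\varepsilon\exp(-c_5Q^{1/\alpha})$, using $\Delta_{v_i}^*(c\varepsilon^{-1})\ge c/(\varepsilon T_i)\gtrsim Q$. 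Applying $\Phi_i$ to all of $l_\omega+q_{i-1}+s_{i-1}$ only adds the exponentially small $s_{i-1}\circ\Phi_i$, so $p_i=[q_{i-1}]_{v_i}+g_i'+\tilde f_i+s_{i-1}\circ\Phi_i$. The crucial point is that the perturbation $l_{\omega-v_i}+q_{i-1}$ fed to the $d=1$ case is $v_1,\dots,v_{i-1}$-resonant, so the mode-wise construction there produces $g_i',\tilde f_i$ that are $v_1,\dots,v_{i-1}$-resonant too; combined with $\{g_i',l_{v_i}\}=0$ this makes $g_i'$ even $v_1,\dots,v_i$-resonant, and since $P_i$ commutes with $P_1,\dots,P_{i-1}$ one gets $q_i=[q_{i-1}]_{v_i}+g_i'$ up to an exponentially small error put into $s_i$. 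The invariants persist.

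After $d$ steps, $H\circ\Phi=l_\omega+q_d+s_d$, and unwinding the recursion, $q_d=P_d\cdots P_1f+\sum_{i=1}^dP_d\cdots P_{i+1}g_i'+(\text{exp.\ small})$. The first term is $[f]_\omega$; each $P_d\cdots P_{i+1}g_i'$ is $v_1,\dots,v_d$-resonant, hence $\omega$-resonant (the $v_i$ span $F$), of size $\lesssim\varepsilon Q^{-1}$. Setting $g:=\sum_{i=1}^dP_d\cdots P_{i+1}g_i'$ gives $\{g,l_\omega\}=0$ and $|g|_{\alpha,\tilde L}\lesssim\varepsilon Q^{-1}=c_3\varepsilon\Delta_\omega^*(c\varepsilon^{-1})^{-1}$, while $\tilde f:=(q_d-[f]_\omega-g)+s_d$ satisfies $|\tilde f|_{\alpha,\tilde L}\lesssim\varepsilon\exp(-c_5Q^{1/\alpha})=c_4\varepsilon\exp(-c_5\Delta_\omega^*(c\varepsilon^{-1})^{1/\alpha})$; this is the announced normal form. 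Finally $|\Phi-\mathrm{Id}|_{\alpha,\tilde L}\lesssim\sum_i|\Phi_i-\mathrm{Id}|\lesssim dQ^{-1}\lesssim\Delta_\omega^*(c\varepsilon^{-1})^{-1}$, and since $d$, the total loss of Gevrey width, and the total loss of domain radius over the $d$ steps are all bounded in terms of $n,R,\omega,\alpha,L$ (the $|v_i|$ being bounded, the $d=1$ constants are uniform), one gets $\Phi\in G^{\alpha,\tilde L}(\mathcal{D}_{R/2},\mathcal{D}_R)$ with $\tilde L=CL$, after a harmless rescaling so the radii decrease from $R$ to $R/2$ in $d$ equal steps.

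I expect the main difficulty to lie in two places. First, the hard Gevrey analysis is already in the $d=1$ case: extracting the \emph{exponentially} small remainder $\tilde f_i=O(\varepsilon e^{-c_5Q^{1/\alpha}})$ from a single periodic averaging requires iterating the averaging $\sim Q^{1/\alpha}$ times and budgeting the Gevrey width across these steps by combinatorial estimates of the type in~\cite{MS02}. Second, keeping the resonant remainder $g$ at the polynomially small size $\varepsilon Q^{-1}$ rather than the trivial size $\varepsilon$ relies on the \emph{sharp} approximation $|\omega-v_i|\lesssim\Delta_\omega(Q)^{-1}$ of~\cite{BF12}, and the compatibility of the composition scheme of~\cite{BN09} with the nested resonant structure — that averaging along $v_i$ inside the $v_1,\dots,v_{i-1}$-resonant subspace does not reawaken the already-normalised directions — has to be verified through the explicit mode-wise description of the $d=1$ construction.
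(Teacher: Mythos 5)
Your strategy is the paper's strategy: approximate $\omega$ by $d$ periodic vectors $v_1,\dots,v_d$ with $T_1v_1,\dots,T_dv_d$ a $\Z$-basis of $\Z^n\cap F$ (the Diophantine result of \cite{BF12}, Proposition~\ref{dio}), then build $\Phi$ as a composition of $d$ periodic averagings along the $v_i$, keeping track of the nested resonant structure at each step, and invoking the algebraic compatibility (Proposition~\ref{anaad}, from \cite{BN09}) to see that averaging along $v_i$ does not destroy the Poisson-commutativity with $l_{v_1},\dots,l_{v_{i-1}}$ already obtained. Your bookkeeping ($p_i=q_i+s_i$, $q_i$ the $v_1,\dots,v_i$-resonant part) differs cosmetically from the paper's ($[f]_{v_1,\dots,v_j}+g_j+f_j$), and you correctly identify that the exponential gain and the Gevrey width control are already concentrated in the $d=1$ case of \cite{MS02}.

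There is, however, one concrete error that would break the sharp remainder estimate as you've argued it: the claim that $|\omega-v_i|\lesssim\varepsilon$. Proposition~\ref{dio} gives $|\omega-v_i|\lesssim(T_iQ)^{-1}$ with $1\lesssim T_i\lesssim\Psi_\omega(Q)$, so the integrable correction $l_{\omega-v_i}$ has size $\nu_i\eqcolon(T_iQ)^{-1}$, which ranges between $\Delta_\omega(Q)^{-1}$ and $Q^{-1}$; since the threshold \eqref{condlin} only gives $\varepsilon\lesssim\Delta_\omega(Q)^{-1}\leq\nu_i$, you have $\nu_i\gtrsim\varepsilon$, not the reverse. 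If you feed $l_{v_i}+(l_{\omega-v_i}+q_{i-1})$ to a periodic normal form that treats the whole perturbation as one block of size $\nu_i$, you only get a remainder $\lesssim\nu_i\exp(-c(T_i\nu_i)^{-1/\alpha})\eqsim\nu_i\exp(-cQ^{1/\alpha})$, which is weaker than the required $\varepsilon\exp(-cQ^{1/\alpha})$ by a factor $\nu_i/\varepsilon$ that cannot be absorbed in general. The fix is exactly what the paper does: use the refined periodic statement (Proposition~\ref{ana}, i.e.\ Proposition~3.2 of \cite{MS02}) with two independent sizes, an integrable part $s$ of size $\nu$ and a genuine perturbation $u$ of size $\varepsilon\lesssim\nu$, which delivers $|u'|\lesssim\varepsilon T\nu$ and $|\tilde u|\lesssim\varepsilon\exp(-c(T\nu)^{-1/\alpha})$, proportional to $\varepsilon$ rather than to $\nu$. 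With that refinement in place your argument goes through exactly as the paper's; your stated $|g_i'|\lesssim\varepsilon^2 T_i$ should also read $\lesssim\varepsilon T_i\nu_i$, but the consequence $\lesssim\varepsilon Q^{-1}$ you actually use is the correct one.
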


On the domain $\mathcal{D}_{R/2}$, the above theorem states the existence of a symplectic Gevrey conjugacy, close to identity, between the original Hamiltonian and a Hamiltonian which is the sum of the integrable part, the average of the perturbation whose size is of order $\varepsilon$, a resonant part which by definition Poisson commutes with the integrable part and whose size is of order $\varepsilon(\Delta_{\omega}^*(c\varepsilon^{-1}))^{-1}$, and a general part whose size is now exponentially small with respect to $\Delta_{\omega}^*(c\varepsilon^{-1})^{\frac{1}{\alpha}}$. The first terms of this Hamiltonian, namely $l_\omega+[f]_\omega+g$, is what is called a resonant normal form, and the last term $\tilde{f}$ is a ``small" remainder.

Concerning the size of this remainder, in \cite{Bou12} an example in the analytic case for $d=n$ is given to show that this result is ``essentially" optimal: we will show below in \S\ref{s2} that this example can be easily adapted to the Gevrey case and for any $1\leq d \leq n$, and that our estimate for the remainder in the Gevrey case is also ``essentially" optimal. The word ``essentially" should be understood as follows: given $\omega$, one can always construct a sequence of positive real numbers $\varepsilon_j=\varepsilon_j(\omega)$, going to zero as $j$ goes to infinity, and a sequence of $\varepsilon_j$-perturbations $f_j$, such that the estimate for the remainder $\tilde{f}_j$ cannot be improved (see the next section for a precise statement). This, of course, does not preclude this estimate to be improved for other values of $\varepsilon$.

Now in the Diophantine case, the estimates of Theorem~\ref{thmlin} can be made more explicit. Indeed, in this case we have the upper bound $\Psi_\omega (Q)\leq \gamma^{-1}Q^{\tau}$ which gives the lower bound $\Delta_{\omega}^*(c\varepsilon^{-1}) \geq (c\gamma\varepsilon^{-1})^{\frac{1}{1+\tau}}$. The following corollary is then straightforward.

\begin{corollary}\label{thmlinD}
Let $H$ be as in~\eqref{Hlin}, and $\omega \in \Omega_d(\gamma,\tau)$. There exist positive constants $c$, $c_1$, $c_2$, $c_3$, $c_4$, $c_5$ and $C$ that depend only on $n,R,\omega,\alpha$ and $L$ such that if 
\begin{equation*}
\varepsilon \leq cc_1^{-(1+\tau)}\gamma,
\end{equation*}
then there exists a symplectic map $\Phi \in G^{\alpha,\tilde{L}}(\mathcal{D}_{R/2},\mathcal{D}_R)$, where $\tilde{L}=CL$, such that 
\[ H \circ \Phi = l_\omega +[f]_\omega+ g + \tilde{f}, \quad \{g,l_\omega\}=0  \]
with the estimates 
\begin{equation*}
|\Phi-\mathrm{Id}|_{\alpha,\tilde{L}} \leq c_2 (c^{-1}\gamma^{-1}\varepsilon)^{\frac{1}{1+\tau}}
\end{equation*} 
and
\begin{equation*}
|g|_{\alpha,\tilde{L}}\leq c_3 \varepsilon(c^{-1}\gamma^{-1}\varepsilon)^{\frac{1}{1+\tau}}, \quad |\tilde{f}|_{\alpha,\tilde{L}}\leq c_4 \varepsilon \exp\left(-c_5 (c\gamma\varepsilon^{-1})^{\frac{1}{\alpha(1+\tau)}}\right).
\end{equation*}
\end{corollary}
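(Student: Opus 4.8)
The plan is to derive the corollary directly from Theorem~\ref{thmlin} by substituting the Diophantine estimate $\Psi_\omega(Q)\le \gamma^{-1}Q^\tau$ into the various quantities appearing in the statement, and checking that the hypothesis \eqref{thr1} is implied by the cleaner condition $\varepsilon \le cc_1^{-(1+\tau)}\gamma$.

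First I would work out the key lower bound on $\Delta_\omega^*$ already announced in the text before the corollary. From the definition $\Delta_\omega(Q)=Q\Psi_\omega(Q)$ and the Diophantine bound we get $\Delta_\omega(Q)\le \gamma^{-1}Q^{1+\tau}$ for all $Q\ge Q_\omega$. Hence, for a given $x$, the inequality $\gamma^{-1}Q^{1+\tau}\le x$, i.e. $Q\le (\gamma x)^{1/(1+\tau)}$, is a \emph{sufficient} condition for $\Delta_\omega(Q)\le x$; taking the supremum over admissible $Q$ in the definition of $\Delta_\omega^*$ yields $\Delta_\omega^*(x)\ge (\gamma x)^{1/(1+\tau)}$ (one should note this holds once $x$ is large enough that $(\gamma x)^{1/(1+\tau)}\ge Q_\omega$, which will be guaranteed by the smallness condition on $\varepsilon$). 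Applying this with $x=c\varepsilon^{-1}$ gives
\[
\Delta_\omega^*(c\varepsilon^{-1}) \;\ge\; (c\gamma\varepsilon^{-1})^{\frac{1}{1+\tau}}.
\]

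Next I would verify that the threshold \eqref{thr1}, namely $\Delta_\omega^*(c\varepsilon^{-1})\ge c_1$, follows from $\varepsilon\le cc_1^{-(1+\tau)}\gamma$: indeed this last inequality is equivalent to $(c\gamma\varepsilon^{-1})^{1/(1+\tau)}\ge c_1$, and combined with the lower bound just obtained it gives $\Delta_\omega^*(c\varepsilon^{-1})\ge c_1$. (If necessary one enlarges $c_1$ so that it also dominates $Q_\omega$, so that the lower bound on $\Delta_\omega^*$ is legitimately available; since all constants are allowed to depend on $\omega$ this is harmless.) Then Theorem~\ref{thmlin} applies and produces the symplectic map $\Phi\in G^{\alpha,\tilde L}(\mathcal{D}_{R/2},\mathcal{D}_R)$ with $\tilde L=CL$ and the splitting $H\circ\Phi=l_\omega+[f]_\omega+g+\tilde f$, $\{g,l_\omega\}=0$.

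Finally I would substitute the lower bound on $\Delta_\omega^*(c\varepsilon^{-1})$ into the three estimates \eqref{estlindist}–\eqref{estlin}. Since $\Delta_\omega^*(c\varepsilon^{-1})^{-1}\le (c\gamma\varepsilon^{-1})^{-1/(1+\tau)}=(c^{-1}\gamma^{-1}\varepsilon)^{1/(1+\tau)}$, the bound on $|\Phi-\mathrm{Id}|_{\alpha,\tilde L}$ becomes $c_2(c^{-1}\gamma^{-1}\varepsilon)^{1/(1+\tau)}$ and the bound on $|g|_{\alpha,\tilde L}$ becomes $c_3\varepsilon(c^{-1}\gamma^{-1}\varepsilon)^{1/(1+\tau)}$. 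For the remainder, monotonicity of $\exp(-c_5 x^{1/\alpha})$ together with $\Delta_\omega^*(c\varepsilon^{-1})\ge(c\gamma\varepsilon^{-1})^{1/(1+\tau)}$ gives
\[
\exp\!\left(-c_5\,\Delta_\omega^*(c\varepsilon^{-1})^{\frac1\alpha}\right)\;\le\;\exp\!\left(-c_5\,(c\gamma\varepsilon^{-1})^{\frac{1}{\alpha(1+\tau)}}\right),
\]
so $|\tilde f|_{\alpha,\tilde L}\le c_4\varepsilon\exp(-c_5(c\gamma\varepsilon^{-1})^{1/(\alpha(1+\tau))})$, as claimed. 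The argument is essentially a one-line substitution; the only mild subtlety — hardly an obstacle — is bookkeeping the role of $Q_\omega$ so that the elementary lower bound for $\Delta_\omega^*$ is valid in the regime of $\varepsilon$ under consideration, which is absorbed into the $\omega$-dependence of the constants.
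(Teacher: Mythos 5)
Your proof is correct and is exactly the argument the paper intends — the text preceding the corollary states the lower bound $\Delta_\omega^*(c\varepsilon^{-1})\ge(c\gamma\varepsilon^{-1})^{1/(1+\tau)}$ derived from $\Psi_\omega(Q)\le\gamma^{-1}Q^\tau$ and then declares the corollary "straightforward", which is the substitution you carry out. Your extra bookkeeping about $Q_\omega$ (enlarging $c_1$ so that the elementary bound on $\Delta_\omega^*$ is available in the regime where the threshold holds, avoiding any circularity) is precisely the detail left implicit in the paper.
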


Even though the size of the remainder is essentially optimal as we already explained, in the Diophantine case we believe that the other estimates, concerning the size of the transformation $\Phi$ and the resonant term $g$, can be improved. As a matter of fact, these improvements are known in the analytic case for $d=n$ (see the comments at the end of this section). 

\paraga We can now state a local result for a perturbation of a general non-linear integrable Hamiltonian system, that is for a Hamiltonian $H \in G^{\alpha,L}(\mathcal{D}_R)$ of the form
\begin{equation}\label{Hnonlin}
\begin{cases} \tag{G2}
H(\theta,I)=h(I)+f(\theta,I), \quad (\theta,I)\in \mathcal{D}_R, \\
\nabla h(0)=\omega, \quad |h|_{\alpha,L} \leq 1, \quad |f|_{\alpha,L} \leq \varepsilon. 
\end{cases}
\end{equation} 
For a ``small" parameter $r>0$, we will focus on the domain $\mathcal{D}_r=\T^n \times B_r$, which is a neighbourhood of size $r$ of the unperturbed torus $\mathcal{T}_0=\T^n \times \{0\}$. 

Since we are interested in $r$-dependent domains in the space of action, the estimates for the derivatives with respect to the actions will have different size than the one for the derivatives with respect to the angles. To distinguish between them, we will split multi-integers $l\in \N^{2n}$ as $l=(l_1,l_2)\in \N^n \times \N^n$ so that $\partial^l =\partial_\theta^{l_1}\partial_I^{l_2}$ and $|l|=|l_1|+|l_2|$. Let us denote by $\mathrm{Id}_I$ and $\mathrm{Id}_\theta$ the identity map in respectively the action and angle space, and for a function $F$ with values in $\T^n \times \R^n$, we shall write $F=(F_\theta,F_I)$.

\begin{theorem}\label{thmnonlin}
Let $H$ be as in~\eqref{Hnonlin}. There exist positive constants $c$, $c_6$, $c_7$, $c_8$, $c_{9}$ and $c_{10}$ that depend only on $n,R,\omega,\alpha$ and $L$, such that if 
\begin{equation}\label{thr2}
\sqrt{\varepsilon} \leq r \leq R, \quad \Delta_{\omega}^*(cr^{-1}) \geq c_6,
\end{equation}
then there exists a symplectic map $\Phi \in G^{\alpha}(\mathcal{D}_{r/2},\mathcal{D}_r)$ such that 
\[ H \circ \Phi = h + g +[f]_\omega+ \tilde{f}, \quad \{g,l_\omega\}=0.  \]
Moreover, for $\tilde{L}=C L$ and any $l=(l_1,l_2)\in \N^{2n}$, we have the estimates 
\begin{equation}\label{estnonlindist}
|\Phi_I-\mathrm{Id}_I|_{\alpha,\tilde{L},l} \leq c_7 rr^{-|l_2|}\Delta_{\omega}^*(cr^{-1})^{-1}, \quad |\Phi_\theta-\mathrm{Id}_\theta|_{\alpha,\tilde{L},l} \leq c_7 r^{-|l_2|}\Delta_{\omega}^*(cr^{-1})^{-1}
\end{equation}
and
\begin{equation}\label{estnonlin}
|g|_{\alpha,\tilde{L},l}\leq c_8 r^2r^{-|l_2|}\Delta_{\omega}^*(cr^{-1})^{-1}, \quad |\tilde{f}|_{\alpha,\tilde{L},l}\leq c_{9} r^2r^{-|l_2|}\exp\left(-c_{10} \Delta_{\omega}^*(cr^{-1})^{\frac{1}{\alpha}}\right).
\end{equation}
\end{theorem}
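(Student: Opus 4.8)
The plan is to derive the non-linear result from the linear one (Theorem \ref{thmlin}) by a rescaling and localization procedure centered at the unperturbed torus $\mathcal{T}_0=\T^n\times\{0\}$. First I would Taylor-expand $h$ at $I=0$: writing $h(I)=h(0)+\omega\cdot I+h_2(I)$ with $h_2$ collecting the quadratic and higher terms, the constant $h(0)$ is dynamically irrelevant and $|h_2|$ is controlled on $B_r$ by $|h|_{\alpha,L}\leq 1$, so that $\sup_{B_r}|h_2|=O(r^2)$ together with the corresponding Gevrey seminorm bounds. The point of the hypothesis $\sqrt{\varepsilon}\leq r$ in \eqref{thr2} is precisely that on the domain $\mathcal{D}_r$ the ``new perturbation'' $h_2+f$, when measured after rescaling the actions to unit size, has size $O(r^2)+O(\varepsilon)=O(r^2)$, i.e. it is governed by $r^2$ rather than by $\varepsilon$; this is why every estimate in \eqref{estnonlin} carries a factor $r^2$ and why the exponential in the remainder involves $\Delta_\omega^*(cr^{-1})$ rather than $\Delta_\omega^*(c\varepsilon^{-1})$.

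Next I would perform the symplectic rescaling $\psi_r:(\theta,I)\mapsto(\theta,rI)$, which maps $\mathcal{D}_1$ (or $\mathcal{D}_R$ after also rescaling appropriately) onto $\mathcal{D}_r$ and multiplies the Hamiltonian by a constant; dividing by $r$ one is reduced to a Hamiltonian of the form $l_\omega(I)+r^{-1}\big(h_2(rI)+f(\theta,rI)\big)$ on a fixed domain, whose perturbing part has $(\alpha,L')$-Gevrey norm $\leq c'\,r$ for a suitable $L'$ comparable to $L$ (the rescaling in $I$ by $r\leq R$ only improves, or at worst mildly changes, the Gevrey constant in the action direction — this is a routine but slightly delicate seminorm computation using the multi-index definition \eqref{norm} and the splitting $l=(l_1,l_2)$). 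Now Theorem \ref{thmlin} applies with $\varepsilon$ replaced by $c'r$: its threshold \eqref{thr1}, $\Delta_\omega^*(c(c'r)^{-1})\geq c_1$, is exactly the second condition in \eqref{thr2} up to renaming constants, and it produces a symplectic $\bar\Phi\in G^{\alpha,\widetilde L}(\mathcal{D}_{1/2},\mathcal{D}_1)$ conjugating the rescaled Hamiltonian to $l_\omega+[\,\cdot\,]_\omega+\bar g+\bar{\tilde f}$ with $|\bar\Phi-\mathrm{Id}|$, $|\bar g|$ of order $r\,\Delta_\omega^*(cr^{-1})^{-1}$ and $|\bar{\tilde f}|$ of order $r\exp(-c_5\Delta_\omega^*(cr^{-1})^{1/\alpha})$. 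Conjugating back by $\psi_r$, i.e. setting $\Phi=\psi_r\circ\bar\Phi\circ\psi_r^{-1}$, and multiplying the Hamiltonian back by $r$, gives $H\circ\Phi=h+g+[f]_\omega+\tilde f$; the extra factor $r$ from undoing the division by $r$ combines with the $r$ already present to yield the $r^2$ in \eqref{estnonlin}. The powers $r^{-|l_2|}$ appearing throughout \eqref{estnonlindist}–\eqref{estnonlin} are precisely the chain-rule cost of the $I$-rescaling: each derivative $\partial_I$ brings a factor $r$ when passing through $\psi_r$, so a seminorm $|\,\cdot\,|_{\alpha,\widetilde L,l}$ on $\mathcal{D}_r$ picks up $r^{-|l_2|}$ relative to the corresponding seminorm on $\mathcal{D}_1$. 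One also has to check that $\{\bar g,l_\omega\}=0$ is preserved under $\psi_r$ — it is, since $\psi_r$ commutes with the flow $X_\omega^t$, so $\{g,l_\omega\}=0$ as claimed. Finally, since $[f]_\omega$ already Poisson-commutes with $l_\omega$ one could fold it into $g$, but keeping it separate (as the statement does) just requires tracking that averaging commutes with the rescaling, which it does.

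The main obstacle I expect is bookkeeping rather than conceptual: controlling precisely how the Gevrey seminorms \eqref{norm} transform under the anisotropic rescaling $\psi_r$ and under Taylor truncation of $h$, so that the constant $C$ (hence $\widetilde L=CL$) and the $c_i$'s genuinely depend only on $n,R,\omega,\alpha,L$ and not on $r$ or $\varepsilon$. In particular one must verify that truncating $h$ at second order and estimating the remainder $h_2$ on $B_r$ respects the Gevrey class with a uniform constant — this uses that $(\alpha,L)$-Gevrey functions have all derivatives controlled, so Taylor's formula with integral remainder keeps one inside $G^{\alpha,L'}$ — and that the factor-of-$r$ gains in the $I$-direction are never turned into losses. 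Once these uniform-constant seminorm estimates are in place, the deduction from Theorem \ref{thmlin} is essentially formal.
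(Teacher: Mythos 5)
Your plan is exactly the paper's argument: the anisotropic rescaling $\sigma:(\theta,I)\mapsto(\theta,rI)$ followed by division by $r$, the second-order Taylor expansion of $h$ at $I=0$ (with the remainder controlled via the Gevrey bound on $h$, i.e. Lemma~\ref{deriv}), the application of the linear result (the paper invokes the $Q$-parametrized Proposition~\ref{proplin}, but that is just Theorem~\ref{thmlin} with $Q=\Delta_\omega^*(\cdot r^{-1})$) to the rescaled Hamiltonian whose perturbation has size $\MP r$ thanks to $\varepsilon\leq r^2$, and the conjugation back by $\sigma$ which produces the $r^{-|l_2|}$ factors and the overall $r^2$ scale. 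Correct, and essentially identical to the paper's proof.
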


The proof of Theorem~\ref{thmnonlin} is an easy consequence of Theorem~\ref{thmlin}: the Hamiltonian~\eqref{Hnonlin} on the domain $\mathcal{D}_r$ is, after scaling, equivalent to the Hamiltonian~\eqref{Hlin} on the domain $\mathcal{D}_1$, and due to the non-linearity, the small parameter is here naturally $r\geq\sqrt{\varepsilon}$ instead of $\varepsilon$. Note that no assumptions on $h$ is required, since no information on the vector $\nabla h(I)$ for $I \in B_r$ is used except of course at $I=0$. 

Let us note that the transformation $\Phi$, and the functions $g$ and $\tilde{f}$ are not $(\alpha,\tilde{L})$-Gevrey, because of the factors $r^{|l_2|}$ appearing in the estimates (nevertheless, they are still $(\alpha,\tilde{L}_r)$-Gevrey for some $\tilde{L}_r$ depending on $\tilde{L}$ and $r$, but this will not be used in the sequel). Even though these factors come from our method of proof (which consists in scaling the system so that it reduces to a perturbation of a linear integrable Hamiltonian), we believe this is not an artefact (similar factors appear in the analytic case when one uses Cauchy estimates to control the norm of the derivatives of a function from the norm of the function on a domain of size $r$). 

Let us also note that in the statement of Theorem~\ref{thmnonlin}, one has the freedom to choose any $r$ such that $ \sqrt{\varepsilon} \leq r \leq R$, provided $r$ is sufficiently small so that the second part of \eqref{thr2} is satisfied. In the sequel, we shall use the statement only for $r=\sqrt{\varepsilon}$ which appears to be the most natural value, but for some other purposes, for instance if one is interested in global stability estimates for non-linear systems (as in the Nekhoroshev theorem), it is useful (and perhaps necessary) to use the statement for bigger values of $r$.  

Now as before, in the Diophantine case we can give a more concrete statement.

\begin{corollary}\label{thmnonlinD}
Let $H$ be as in~\eqref{Hnonlin}, and $\omega \in \Omega_d(\gamma,\tau)$. There exist positive constants $c$, $c_6$, $c_7$, $c_8$, $c_{9}$ and $c_{10}$ that depend only on $n,R,\omega,\alpha$ and $L$, such that if 
\begin{equation}\label{thr3}
\sqrt{\varepsilon} \leq r \leq R, \quad r \leq cc_6^{-(1+\tau)}\gamma,
\end{equation}
then there exists a symplectic map $\Phi \in G^{\alpha}(\mathcal{D}_{r/2},\mathcal{D}_r)$ such that 
\[ H \circ \Phi = h + g +[f]_\omega+ \tilde{f}, \quad \{g,l_\omega\}=0.  \]
Moreover, for $\tilde{L}=C L$ and any $l=(l_1,l_2)\in \N^{2n}$, we have the estimates 
\begin{equation*}
|\Phi_I-\mathrm{Id}_I|_{\alpha,\tilde{L},l} \leq c_7 rr^{-|l_2|}(c^{-1}\gamma^{-1}r)^{\frac{1}{1+\tau}}, \quad |\Phi_\theta-\mathrm{Id}_\theta|_{\alpha,\tilde{L},l} \leq c_7 r^{-|l_2|}(c^{-1}\gamma^{-1}r)^{\frac{1}{1+\tau}}
\end{equation*}
and
\begin{equation*}
|g|_{\alpha,\tilde{L},l}\leq c_8 r^2r^{-|l_2|}(c^{-1}\gamma^{-1}r)^{\frac{1}{1+\tau}}, \quad |\tilde{f}|_{\alpha,\tilde{L},l}\leq c_{9} r^2r^{-|l_2|}\exp\left(-c_{10} (c\gamma r^{-1})^{\frac{1}{\alpha(1+\tau)}}\right).
\end{equation*}
\end{corollary}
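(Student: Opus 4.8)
The plan is to derive Corollary~\ref{thmnonlinD} directly from Theorem~\ref{thmnonlin} by specializing the abstract quantity $\Delta_\omega^*$ to the Diophantine setting, exactly as was done to pass from Theorem~\ref{thmlin} to Corollary~\ref{thmlinD}. The only new input is the defining inequality of $\Omega_d(\gamma,\tau)$, namely $\Psi_\omega(Q)\leq\gamma^{-1}Q^\tau$ for all $Q\geq Q_\omega$.

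First I would convert this bound into a lower bound for $\Delta_\omega^*$. Since $\Delta_\omega(Q)=Q\Psi_\omega(Q)\leq\gamma^{-1}Q^{1+\tau}$, the set $\{Q\geq Q_\omega\mid\Delta_\omega(Q)\leq x\}$ contains every $Q$ with $\gamma^{-1}Q^{1+\tau}\leq x$, whence
\[ \Delta_\omega^*(x)\geq(\gamma x)^{\frac{1}{1+\tau}} \]
once $x$ is large enough that $(\gamma x)^{1/(1+\tau)}\geq Q_\omega$, a restriction that is absorbed into the constants. Taking $x=cr^{-1}$ gives $\Delta_\omega^*(cr^{-1})\geq(c\gamma r^{-1})^{\frac{1}{1+\tau}}$.

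Next I would check that the hypothesis~\eqref{thr3} implies the hypothesis~\eqref{thr2}: the condition $r\leq cc_6^{-(1+\tau)}\gamma$ is equivalent to $(c\gamma r^{-1})^{1/(1+\tau)}\geq c_6$, which by the previous step forces $\Delta_\omega^*(cr^{-1})\geq c_6$ (the condition $\sqrt\varepsilon\leq r\leq R$ being common to both statements). Hence Theorem~\ref{thmnonlin} applies with the same constants $c,c_6,\dots,c_{10},C$, producing the symplectic map $\Phi\in G^\alpha(\mathcal{D}_{r/2},\mathcal{D}_r)$ and the normal form $H\circ\Phi=h+g+[f]_\omega+\tilde f$ with $\{g,l_\omega\}=0$.

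Finally I would substitute the lower bound into the estimates~\eqref{estnonlindist} and~\eqref{estnonlin}. Using $\Delta_\omega^*(cr^{-1})^{-1}\leq(c^{-1}\gamma^{-1}r)^{\frac{1}{1+\tau}}$ in the bounds for $\Phi_I-\mathrm{Id}_I$, $\Phi_\theta-\mathrm{Id}_\theta$ and $g$, and $\Delta_\omega^*(cr^{-1})^{1/\alpha}\geq(c\gamma r^{-1})^{\frac{1}{\alpha(1+\tau)}}$ together with the monotonicity of $x\mapsto\exp(-x)$ in the exponent of the bound for $\tilde f$, yields precisely the claimed inequalities. I do not expect any real obstacle: all the analytic content sits in Theorem~\ref{thmnonlin}, and the corollary is bookkeeping, the only point deserving a moment's care being the equivalence between the threshold on $r$ in~\eqref{thr3} and the threshold on $\Delta_\omega^*$ in~\eqref{thr2}.
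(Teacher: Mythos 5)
Your proposal is correct and follows exactly the same route the paper takes: specialize the Diophantine bound $\Psi_\omega(Q)\leq\gamma^{-1}Q^\tau$ to obtain $\Delta_\omega^*(cr^{-1})\geq(c\gamma r^{-1})^{1/(1+\tau)}$, verify that~\eqref{thr3} implies~\eqref{thr2}, and substitute this lower bound into the estimates of Theorem~\ref{thmnonlin}. The paper calls this step straightforward and does not even write it out, so your bookkeeping is a complete justification.
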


\paraga It is perhaps worthwhile to compare our results with previous results, which were restricted mostly to the analytic case $\alpha=1$ or the periodic case $d=1$.

In the analytic case $\alpha=1$, results similar to Theorem~\ref{thmlin} (and also to Theorem~\ref{thmnonlin}) are contained in \cite{Pos93} and \cite{DG96} (see also \cite{Sim94}). Note that our results are however more accurate than those contained in \cite{Pos93}: in the estimates for $\Phi$ and $g$, the term $\Delta_{\omega}^*(c\varepsilon^{-1})^{-1}$ we obtain here is replaced by $1$. This does not lead to any improvement if one is interested in global stability estimates for perturbations of non-linear integrable Hamiltonians because the geometry of resonances prevails in this case, but this improvement on the estimate for $\Phi$ is visible for global stability estimates for perturbations of linear integrable Hamiltonians (compare Theorem~\ref{stablin} below with Theorem $5$ of \cite{Pos93}) and for local stability estimates for perturbations of non-linear integrable Hamiltonians. Also, the improvement on the estimate for $g$ is important for the application to the splitting of invariant manifolds (see Remark~\ref{rem}). Now concerning Corollary~\ref{thmlinD} in the special case $\alpha=1$, one can in fact find better results in \cite{Fas90}: still in the estimates for $\Phi$ and $g$, the term $(c^{-1}\gamma^{-1}\varepsilon)^{\frac{1}{1+\tau}}$ we find is replaced by $c^{-1}\gamma^{-1}\varepsilon$. This discrepancy can be explained as follows. The perturbation theory of quasi-periodic motions essentially relies on solving a certain equation (called homological) which consists of integrating a function along the linear flow of frequency $\omega$. Assuming that the function has zero average along the flow, a formal solution always exists, and it is a smooth convergent solution if and only if $\omega$ is Diophantine. So if one considers non-Diophantine vectors (which is important for several problems, such as Nekhoroshev type estimates), one can only solve an ``approximate" equation. The usual approach is to approximate the function by a nicer function (usually a polynomial function), whereas here, as in \cite{BF12}, we just approximate the frequency by a nicer frequency (a periodic frequency). But if we restrict to Diophantine vectors, then one can actually solve the exact equation, and this ultimately leads to better estimates.    

Now in the periodic case $d=1$, the result is known for any $\alpha \geq 1$ (see \cite{MS02}), and our proof of the general case $1 \leq d \leq n$ crucially relies on it. In fact, in \cite{MS02}, one has a more accurate result, namely the existence of a formal transformation to a formal normal form, both having Gevrey asymptotics. Such a result is stronger than the existence of a transformation to a normal form with an exponentially small remainder, as the latter is implied by the former (but not the contrary). In the analytic case $\alpha=1$, these asymptotics were first established in \cite{RS96}. 

Finally, there are some related results for $d=n$ and any $\alpha \geq 1$. First in \cite{MP10}, the dynamics in the neighbourhood of a Gevrey Lagrangian torus is considered, and a Gevrey normal form is constructed. The system studied there can be brought to a system of the form we considered, though with a special perturbation, and the result obtained in \cite{MP10} is therefore close but different. Then, in an unpublished manuscript \cite{Sau06}, a result analogous to \cite{MS02}, that is the construction of a formal transformation to a formal normal form with Gevrey asymptotics, is proved in the case of a Diophantine vector. As before, these two results give more information but in a different setting, moreover they are based on solving exact homological equations and so they cannot apply to non-Diophantine vectors. 

\paraga Finally let us describe the plan of this paper. In \S\ref{s2}, we will deduce from Theorem~\ref{thmlin} and Theorem~\ref{thmnonlin} exponentially large stability estimates for the evolution of the action variables, which are global for perturbations of linear integrable systems and only local for perturbations of non-linear integrable systems. In the first case, that is for perturbations of linear integrable systems, we will show on an example that these estimates cannot be improved in general. Then in \S\ref{s3}, we will use Theorem~\ref{thmnonlin} to prove a result of exponential smallness for the splitting of invariant manifolds of a hyperbolic tori. The proof of Theorem~\ref{thmlin} and Theorem~\ref{thmnonlin} will be given in \S\ref{s4}. Finally, we will gather in a appendix some technical estimates concerning Gevrey functions that are used to prove our theorems.

To simplify the notations and improve the readability, when convenient we shall replace constants depending on $n,R,\omega,\alpha$ and $L$ that can, but need not be, made explicit by a $\cdot$, that is an expression of the form $u \MP v$  means that there exists a constant $c>0$, that depends on the above set of parameters, such that $u \leq cv$. Similarly, we will use the notations $u \PS v$ and $u \EP v$.

\section{Application to stability estimates}\label{s2}

In this section, we will give direct consequences of our normal forms Theorem~\ref{thmlin} and Theorem~\ref{thmnonlin} to the stability of the action variables.

Recall that we have defined $F$ as the smallest rational subspace containing the vector $\omega$. The key point to obtain stability estimates is the following observation. By definition of the Poisson Bracket, the equalities $\{[f]_\omega,l_\omega\}=\{g,l_\omega\}=0$ imply that $l_\omega$ is a first integral of the normal form $l_\omega+[f]_\omega+g$ in the statement of Theorem~\ref{thmlin} (or a first integral of $h+[f]_\omega+g$, since $h$ is integrable, in the statement of Theorem~\ref{thmnonlin}). Therefore the action variables of all solutions of the normal form stay constant along $F$, that is $\Pi_F(I(t)-I_0)=0$ as long as they are defined, if $I_0=I(0)$ is an arbitrary initial action and where we have denoted by $\Pi_F$ the projection onto the subspace $F$. Now if we add a small remainder to the normal form, the quantities $|\Pi_F(I(t)-I_0)|$, as long as they are defined, will have small variations for an interval of time which is roughly the inverse of the size of (the partial derivative with respect to $\theta$ of) the remainder. Coming back to our original system, since it is conjugated by a symplectic map which is close to identity to such a normal form with a small remainder, the same property remains true as long as the solution stays on the domain where the conjugacy is defined.     

\paraga Let us state precisely the result, starting with a perturbation of a linear integrable Hamiltonian as in~\eqref{Hlin}.

\begin{theorem}\label{stablin}
Under the notations and assumptions of Theorem~\ref{thmlin}, let $(\theta(t),I(t))$ be a solution of the Hamiltonian system defined by $H$, with $(\theta_0,I_0)\in \mathcal{D}_{R/4}$ and let $T_0$ be the smallest $t \in ]0,+\infty]$ such that $(\theta(t),I(t))\notin \mathcal{D}_{R/4}$. Then we have the estimates
\[ |\Pi_F(I(t)-I_0)| \leq \delta, \quad |t|\MP \min \left\{T_0, \delta\varepsilon^{-1}\exp\left(\cdot\Delta_\omega^*(\cdot\varepsilon^{-1})^{\frac{1}{\alpha}}\right)\right\} \]
for any $(\Delta_\omega^*(\cdot\varepsilon^{-1}))^{-1} \MP \delta\MP 1$. Moreover, if $F=\R^n$, then 
\[ |I(t)-I_0| \leq \delta, \quad |t|\MP \delta\varepsilon^{-1}\exp\left(\cdot\Delta_\omega^*(\cdot\varepsilon^{-1})^{\frac{1}{\alpha}}\right). \]
\end{theorem}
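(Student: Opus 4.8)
The strategy is to conjugate the orbit into the normal-form coordinates supplied by Theorem~\ref{thmlin}, in which $l_\omega$ is an exact integral of the dominant part, and then let the exponentially small remainder $\tilde f$ account for the drift. Concretely, the hypotheses of Theorem~\ref{thmlin} are in force, so there is a symplectic $\Phi\in G^{\alpha,\tilde L}(\mathcal{D}_{R/2},\mathcal{D}_R)$ with $H\circ\Phi=N+\tilde f$, where $N:=l_\omega+[f]_\omega+g$, $\{g,l_\omega\}=0$, and the bounds \eqref{estlindist}--\eqref{estlin} hold; since $\{l_\omega,l_\omega\}=0$ and $[f]_\omega$ is $X_\omega^t$-invariant by construction (so $\{[f]_\omega,l_\omega\}=0$), we also have $\{N,l_\omega\}=0$. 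The bound \eqref{estlindist} controls $\Phi-\mathrm{Id}$ and its first derivatives in $C^{0}(\mathcal{D}_{R/2})$ by $\MP\Delta_\omega^*(c\varepsilon^{-1})^{-1}$, which is as small as we wish by \eqref{thr1}; hence $\Phi$ is a diffeomorphism onto its image, $\Phi(\mathcal{D}_{R/2})\supseteq\mathcal{D}_{R/3}$, and $\Phi^{-1}$ is defined on $\mathcal{D}_{R/3}$ with values in $\mathcal{D}_{R/2}$. For $|t|<T_0$ the orbit $z(t)=(\theta(t),I(t))$ stays in $\mathcal{D}_{R/4}\subset\mathcal{D}_{R/3}$, so $z^*(t):=\Phi^{-1}(z(t))=(\theta^*(t),I^*(t))$ is well defined, remains in $\mathcal{D}_{R/2}$, and solves the Hamiltonian system of $N+\tilde f$ (as $\Phi$ conjugates the flows).

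The key point is $\Pi_F\partial_\theta N\equiv 0$. Since $\{N,l_\omega\}=0$ one has $N\circ X_\omega^t=N$, i.e. $N(\theta+t\omega,I)=N(\theta,I)$ for all $t$; expanding $N(\theta,I)=\sum_k\hat N_k(I)e^{2\pi i k\cdot\theta}$ this forces $\hat N_k\equiv 0$ whenever $k\cdot\omega\neq 0$. For $k\in\Z^n$ with $k\cdot\omega=0$ the hyperplane $\{v\in\R^n: k\cdot v=0\}$ is rational and contains $\omega$, hence contains $F$ by minimality of $F$; therefore $k\perp F$ and $\Pi_F k=0$, and differentiating term by term yields $\Pi_F\partial_\theta N\equiv 0$. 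Consequently
\[ \frac{d}{dt}\Pi_F(I^*(t))=-\Pi_F\partial_\theta(N+\tilde f)(z^*(t))=-\Pi_F\partial_\theta\tilde f(z^*(t)), \]
and since the Gevrey norm dominates the first derivatives, \eqref{estlin} gives $|\partial_\theta\tilde f|_{C^{0}(\mathcal{D}_{R/2})}\MP\varepsilon\exp\left(-c_5\Delta_\omega^*(c\varepsilon^{-1})^{\frac{1}{\alpha}}\right)$, whence, by integration,
\[ |\Pi_F(I^*(t)-I^*(0))|\ \MP\ |t|\,\varepsilon\exp\left(-c_5\Delta_\omega^*(c\varepsilon^{-1})^{\frac{1}{\alpha}}\right). \]

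Returning to the original variables, $|I(t)-I^*(t)|\le|\Phi_I-\mathrm{Id}_I|_{C^{0}(\mathcal{D}_{R/2})}\MP\Delta_\omega^*(c\varepsilon^{-1})^{-1}$, and likewise at $t=0$ (recall $I_0=I(0)$), so that
\[ |\Pi_F(I(t)-I_0)|\ \MP\ \Delta_\omega^*(c\varepsilon^{-1})^{-1}+|t|\,\varepsilon\exp\left(-c_5\Delta_\omega^*(c\varepsilon^{-1})^{\frac{1}{\alpha}}\right). \]
For $\delta$ with $\Delta_\omega^*(c\varepsilon^{-1})^{-1}\MP\delta\MP1$, a suitable choice of the constant implicit in the lower bound on $\delta$ makes the first term $\le\delta/2$, and the second is $\le\delta/2$ as soon as $|t|\MP\delta\varepsilon^{-1}\exp\left(\cdot\,\Delta_\omega^*(\cdot\,\varepsilon^{-1})^{\frac{1}{\alpha}}\right)$; since all of the above holds for $|t|<T_0$, this proves the first assertion. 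When $F=\R^n$ there is no fast action direction and the bound reads $|I(t)-I_0|\MP\delta\MP1$; running the same argument on the slightly larger domain $\mathcal{D}_{R/3}$ (where $\Phi^{-1}$ and the bounds \eqref{estlindist}--\eqref{estlin} remain available), a standard continuity argument shows that the orbit stays in $B_{R/4+O(\delta)}\subset B_{R/3}$ for all $|t|$ in the exponential window, so the estimate persists with no reference to $T_0$, giving $|I(t)-I_0|\le\delta$ for $|t|\MP\delta\varepsilon^{-1}\exp\left(\cdot\,\Delta_\omega^*(\cdot\,\varepsilon^{-1})^{\frac{1}{\alpha}}\right)$.

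The main obstacle is not a quantitative estimate — all smallness is inherited from Theorem~\ref{thmlin} — but the algebraic fact $\Pi_F\partial_\theta N\equiv 0$, which expresses that commutation with $l_\omega$ confines the drift of the normal form to $F^\perp$, together with the bookkeeping needed to keep the conjugated orbit inside $\mathcal{D}_{R/2}$ (where Theorem~\ref{thmlin} and its estimates apply) throughout the relevant time interval. The exponential time scale then follows immediately from the exponential smallness of $\tilde f$.
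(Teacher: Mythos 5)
Your proof is correct and follows essentially the same route as the paper's: conjugate by the $\Phi$ of Theorem~\ref{thmlin}, observe that the resonant part of the transformed Hamiltonian has $\partial_\theta$-component in $F^\perp$, integrate the exponentially small remainder, and transfer back using the closeness of $\Phi$ to the identity, which is why the lower bound on $\delta$ is needed. The only cosmetic difference is that you spell out the Fourier argument behind the fact that $\{N,l_\omega\}=0$ forces $\Pi_F\partial_\theta N\equiv 0$, which the paper simply asserts.
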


\begin{corollary}\label{stablinD}
Under the notations and assumptions of Corollary~\ref{thmlinD}, let $(\theta(t),I(t))$ be a solution of the Hamiltonian system defined by $H$, with $(\theta_0,I_0)\in \mathcal{D}_{R/4}$ and let $T_0$ be the smallest $t \in ]0,+\infty]$ such that $(\theta(t),I(t))\notin \mathcal{D}_{R/4}$. Then we have the estimates
\[ |\Pi_F(I(t)-I_0)| \leq \delta, \quad |t|\MP \min \left\{T_0, \delta\varepsilon^{-1}\exp\left(\cdot(\gamma\varepsilon^{-1})^{\frac{1}{\alpha(1+\tau)}}\right)\right\} \]
for any $(\gamma^{-1}\varepsilon)^{\frac{1}{1+\tau}} \MP \delta\MP 1$. Moreover, if $F=\R^n$, then 
\[ |I(t)-I_0| \leq \delta, \quad |t|\MP \delta\varepsilon^{-1}\exp\left(\cdot(\gamma\varepsilon^{-1})^{\frac{1}{\alpha(1+\tau)}}\right). \]
\end{corollary}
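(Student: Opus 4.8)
The plan is to obtain Corollary~\ref{stablinD} as a direct specialization of Theorem~\ref{stablin} to Diophantine frequencies, by inserting into the latter the explicit lower bound on $\Delta_\omega^*$ already recorded before Corollary~\ref{thmlinD}. First I would recall that for $\omega \in \Omega_d(\gamma,\tau)$ one has $\Psi_\omega(Q) \leq \gamma^{-1}Q^\tau$, hence $\Delta_\omega(Q) = Q\Psi_\omega(Q) \leq \gamma^{-1}Q^{1+\tau}$; since $\Delta_\omega^*(x) = \sup\{Q \geq Q_\omega : \Delta_\omega(Q) \leq x\}$ and the choice $Q_0 = (\gamma x)^{1/(1+\tau)}$ gives $\gamma^{-1}Q_0^{1+\tau} = x$, we get $\Delta_\omega(Q_0) \leq x$ and therefore
\[ \Delta_\omega^*(c\varepsilon^{-1}) \geq (c\gamma\varepsilon^{-1})^{\frac{1}{1+\tau}} \]
once $\varepsilon$ is small enough that $Q_0 \geq Q_\omega$. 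In particular the threshold $\Delta_\omega^*(c\varepsilon^{-1}) \geq c_1$ required to invoke Theorem~\ref{thmlin}, hence Theorem~\ref{stablin}, is implied by the hypothesis $\varepsilon \leq cc_1^{-(1+\tau)}\gamma$ of Corollary~\ref{thmlinD}, so there is nothing to check on the smallness side.

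Next I would feed this bound into the two places where $\Delta_\omega^*$ enters the conclusion of Theorem~\ref{stablin}. On the one hand, the admissible range of $\delta$ only widens: $(\Delta_\omega^*(\cdot\,\varepsilon^{-1}))^{-1} \leq (\cdot\,\gamma\varepsilon^{-1})^{-\frac{1}{1+\tau}} \MP (\gamma^{-1}\varepsilon)^{\frac{1}{1+\tau}}$, the constant $c^{-1/(1+\tau)}$ depending only on $n,R,\omega,\alpha,L$ being absorbed into $\MP$; so any $\delta$ with $(\gamma^{-1}\varepsilon)^{\frac{1}{1+\tau}} \MP \delta \MP 1$ is also admissible in Theorem~\ref{stablin}. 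On the other hand, since $\exp$ is increasing,
\[ \exp\!\left(\cdot\,\Delta_\omega^*(\cdot\,\varepsilon^{-1})^{\frac{1}{\alpha}}\right) \geq \exp\!\left(\cdot\,(\gamma\varepsilon^{-1})^{\frac{1}{\alpha(1+\tau)}}\right), \]
again after renaming the positive constant in the exponent. Substituting these two facts into the conclusion of Theorem~\ref{stablin} gives exactly $|\Pi_F(I(t)-I_0)| \leq \delta$ for $|t| \MP \min\{T_0,\ \delta\varepsilon^{-1}\exp(\cdot\,(\gamma\varepsilon^{-1})^{1/(\alpha(1+\tau))})\}$, and the identical substitution in the $F=\R^n$ clause of Theorem~\ref{stablin} yields the last assertion.

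There is essentially no obstacle here: the argument is a single monotone substitution, and the only point deserving care is the direction of the inequalities, namely that replacing $\Delta_\omega^*$ by its Diophantine lower bound enlarges the forbidden-$\delta$ threshold and shrinks the exponential stability time, so that the resulting statement is \emph{weaker} than Theorem~\ref{stablin} and hence implied by it. (All the genuine content lies upstream, in Theorem~\ref{stablin} itself: conjugating $H$ to its resonant normal form $l_\omega + [f]_\omega + g + \tilde f$ via Theorem~\ref{thmlin}, using $\{l_\omega, [f]_\omega + g\} = 0$ to see that $\Pi_F \dot I$ is controlled by $|\partial_\theta \tilde f|_{C^0}$ along the transformed flow, and running a continuity/bootstrap argument on $\mathcal D_{R/4}$ valid as long as the solution remains in the domain where the close-to-identity conjugacy $\Phi$ of Theorem~\ref{thmlin} is defined.)
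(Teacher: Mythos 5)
Your proposal is correct and follows essentially the same route as the paper, which treats Corollary~\ref{stablinD} as an immediate specialization of Theorem~\ref{stablin}: one inserts the Diophantine lower bound $\Delta_\omega^*(c\varepsilon^{-1})\geq(c\gamma\varepsilon^{-1})^{1/(1+\tau)}$ (already recorded in the paper before Corollary~\ref{thmlinD}) into the conclusion of Theorem~\ref{stablin}, checking, as you do, that the substitution goes in the weakening direction (the admissible $\delta$-range implied is contained in that of the theorem, and the exponential time bound is shortened). There is nothing to add; the paper simply leaves this substitution implicit.
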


Note that we have exponential stability for the action variables only in the case $F=\R^n$, that is when $\omega$ is non-resonant, since in this case we can ensure that $T_0$ is at least exponentially large. In general, the stability result is only partial since it involves the projection of the action components onto $F$, and the time $T_0$, which is easily seen to be always at least of order $\delta\varepsilon^{-1}$, cannot be much larger in general (it is easy to construct examples for which the action variables of a solution drift along the orthogonal complement of $F$ with a speed of order $\varepsilon$).

Note that in the particular case where $\delta \EP (\Delta_\omega^*(\cdot\varepsilon^{-1}))^{-1}$, we have 
\[ |\Pi_F(I(t)-I_0)| \MP (\Delta_\omega^*(\cdot\varepsilon^{-1}))^{-1}\]
for
\[|t|\MP \min\left\{T_0,(\Delta_\omega^*(\cdot\varepsilon^{-1}))^{-1}\varepsilon^{-1}\exp\left(\cdot\Delta_\omega^*(\cdot\varepsilon^{-1})^{\frac{1}{\alpha}}\right)\right\} \]
and in the Diophantine case, for $\delta \EP (\gamma^{-1}\varepsilon)^{\frac{1}{1+\tau}}$, we have
\[ |\Pi_F(I(t)-I_0)| \MP (\gamma^{-1}\varepsilon)^{\frac{1}{1+\tau}}\]
for
\[|t|\MP \min\left\{T_0,\cdot\gamma^{-\frac{1}{1+\tau}}\varepsilon^{-\frac{\tau}{1+\tau}}\exp\left(\cdot(\gamma\varepsilon^{-1})^{\frac{1}{\alpha(1+\tau)}}\right)\right\}. \]
Still in the Diophantine case, we expect that it could be possible to take $\delta \EP \gamma^{-1}\varepsilon$ (see the comments at the end of the first section). 

The deduction of Theorem~\ref{estlin} from Theorem~\ref{thmlin} is very classical, but for completeness we give the details.

\begin{proof}
Using Theorem~\ref{thmlin}, it is enough to prove the statement for a solution $(\theta(t),I(t))$ of the Hamiltonian $\tilde{H}=H \circ \Phi$, with $(\theta_0,I_0)\in \mathcal{D}_{R/2}$ and $T_0$ being the smallest $t \in ]0,+\infty]$ such that $(\theta(t),I(t))\notin \mathcal{D}_{R/2}$. Indeed, we have $\tilde{H}=H \circ \Phi$, where $\Phi : \mathcal{D}_{R/2} \rightarrow \mathcal{D}_{R}$ is symplectic. Moreover, from~\eqref{estlindist}, we have the estimate
\[ |\Phi-\mathrm{Id}|_{C^0(\mathcal{D}_{R/2})} \leq |\Phi-\mathrm{Id}|_{\alpha,\tilde{L}}  \MP (\Delta_\omega^*(\cdot\varepsilon^{-1}))^{-1}.\]
so we can ensure that the image of $\Phi$ contains the domain $\mathcal{D}_{R/4}$, and since we have assumed that $(\Delta_\omega^*(\cdot\varepsilon^{-1}))^{-1} \MP \delta\MP 1$, the statement for $\tilde{H}$ trivially implies the statement for $H$ (only with different implicit constants). Note that in fact we will not use the first part of~\eqref{estlin}, but only the following obvious consequence of the second part of~\eqref{estlin}:
\begin{equation}\label{estestlin2}
|\partial_\theta \tilde{f}|_{C^0(\mathcal{D}_{R/2})} \MP |\tilde{f}|_{\alpha,\tilde{L}}\MP \varepsilon \exp\left(-\cdot (\Delta_\omega^*(\cdot\varepsilon^{-1}))^{\frac{1}{\alpha}}\right).
\end{equation}  
Recall that the equation of motions of $\tilde{H}$ are
\[ \dot{\theta}=\partial_I \tilde{H}(\theta,I), \quad \dot{I}=-\partial_{\theta} \tilde{H}(\theta,I)  \]
and using the mean value theorem and the fact that $l_\omega$ is integrable,
\[ |I(t)-I_0| \leq |t||\partial_\theta\tilde{H}|_{C^0(\mathcal{D}_{R/2})} \leq |t||\partial_\theta ([f]_\omega+g+\tilde{f})|_{C^0(\mathcal{D}_{R/2})}, \quad |t|\leq T_0.  \]
But $\{g,l_\omega\}=0$ is equivalent to $\partial_\theta g(\theta,I) \in F^\perp$, and similarly for $[f]_\omega$, hence if we project onto $F$, we have
\[ |\Pi_{F}(I(t)-I_0)| \leq |t||\partial_\theta \tilde{f}|_{C^0(\mathcal{D}_{R/2})}, \quad |t|\leq T_0.  \]
Now using~\eqref{estestlin2} and 
\[ |t|\MP \delta\varepsilon^{-1}\exp\left(\cdot (\Delta_\omega^*(\cdot\varepsilon^{-1}))^{\frac{1}{\alpha}}\right) \]
we obtain
\[ |\Pi_F(I(t)-I_0)| \leq \delta, \quad |t|\MP \min \left\{T_0, \delta\varepsilon^{-1}\exp\left(\cdot (\Delta_\omega^*(\cdot\varepsilon^{-1}))^{\frac{1}{\alpha}}\right)\right\} \]
for any $(\Delta_\omega^*(\cdot\varepsilon^{-1}))^{-1} \MP \delta\MP 1$. Finally, note that if $F=\R^n$, the map $\Pi_F$ is the identity so that
\[ T_0\PS \delta\varepsilon^{-1}\exp\left(-\cdot (\Delta_\omega^*(\cdot\varepsilon^{-1}))^{\frac{1}{\alpha}}\right) \]
and hence
\[ |I(t)-I_0| \leq \delta, \quad |t|\MP \delta\varepsilon^{-1}\exp\left(-\cdot (\Delta_\omega^*(\cdot\varepsilon^{-1}))^{\frac{1}{\alpha}}\right). \]
This concludes the proof.
\end{proof}

\paraga Now we will show on an example that the estimates of Theorem~\ref{stablin}, and therefore the estimate on the remainder in Theorem~\ref{thmlin}, are essentially optimal. 

\begin{theorem}\label{explin}
Let $\omega\in \R^n\setminus\{0\}$. Then there exist a sequence $(\varepsilon_j)_{j\in\N}$ of positive real numbers and a sequence $(f_j)_{j\in\N}$ of functions in $G^{\alpha,L}(\mathcal{D}_R)$, with 
\[ \varepsilon_j \MP |f_j|_{\alpha,L} \MP \varepsilon_j, \quad \lim_{j \rightarrow +\infty}\varepsilon_j=0,\] 
such that for $j \PS 1$, the Hamiltonian system defined by $H_j=l_\omega+f_j$ has solutions $(\theta(t),I(t))$ which satisfy
\[ |t|\varepsilon_j\exp\left(-\cdot (\Delta_\omega^*(\cdot\varepsilon_j^{-1}))^{\frac{1}{\alpha}}\right) \MP |\Pi_F(I(t)-I_0)|\MP|t|\varepsilon_j\exp\left(-\cdot (\Delta_\omega^*(\cdot\varepsilon_j^{-1}))^{\frac{1}{\alpha}}\right). \]
\end{theorem}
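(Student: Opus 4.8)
The plan is to construct an explicit, essentially one-dimensional (per resonant direction) example where the perturbation resonates with a near-optimal harmonic, so that the drift of $\Pi_F(I(t))$ is governed exactly by the Fourier coefficient of that harmonic, and then to choose the sequence $\varepsilon_j$ so that the corresponding Fourier coefficient is as large as the Gevrey norm allows. Concretely, I would first reduce to the model resonance $\omega = (\varpi, 0) \in \R^d \times \R^m$ with $\varpi$ non-resonant, so that $F = \R^d \times \{0\}$ and $\Pi_F$ is projection onto the first $d$ coordinates; the general case follows by a linear change of coordinates respecting the lattice. For a sequence of integer vectors $k_j \in \Z^n \cap F$ realizing the maximum in the definition~\eqref{psi} of $\Psi_\omega$ at a scale $Q_j \to \infty$ — so that $|k_j \cdot \omega| = \Psi_\omega(Q_j)^{-1}$ and $|k_j| \leq Q_j$ — I would set
\[
f_j(\theta, I) = \lambda_j \cos(2\pi k_j \cdot \theta),
\]
where $\lambda_j > 0$ is a small amplitude to be chosen. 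Since $f_j$ depends only on $\theta$ and $k_j \in F$, the equations of motion give $\dot I = -\partial_\theta f_j = 2\pi \lambda_j k_j \sin(2\pi k_j \cdot \theta)$ with $\theta(t) = \theta_0 + t\omega$, so $k_j \cdot \theta(t) = k_j\cdot\theta_0 + t\, (k_j \cdot \omega)$, and one integrates explicitly:
\[
\Pi_F(I(t) - I_0) = \frac{\lambda_j k_j}{k_j \cdot \omega}\bigl(\cos(2\pi k_j\cdot\theta_0) - \cos(2\pi k_j\cdot\theta(t))\bigr).
\]
This is genuinely oscillatory, but by choosing $\theta_0$ with $k_j \cdot \theta_0 \in \tfrac14 \Z$-type phases one gets, over a suitable window of times $|t| \lesssim |k_j \cdot \omega|^{-1} = \Psi_\omega(Q_j)$, the two-sided bound $|\Pi_F(I(t) - I_0)| \EP \lambda_j |k_j| |k_j\cdot\omega|^{-1} |t| \cdot |t|^{-1}$; more carefully, one compares with the linear-in-$t$ behaviour near $t = 0$, getting $|\Pi_F(I(t)-I_0)| \EP \lambda_j |k_j| |t|$ for $|t| \MP \Psi_\omega(Q_j)$, which after the amplitude choice below will match the claimed $|t|\varepsilon_j \exp(-\cdot \Delta_\omega^*(\cdot\varepsilon_j^{-1})^{1/\alpha})$.

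The amplitude is dictated by the Gevrey norm: a standard computation (the kind collected in the appendix of the paper) shows that for $g_j = \cos(2\pi k_j\cdot\theta)$ one has $|g_j|_{\alpha,L} \EP \exp(\cdot L^{1/\alpha}|k_j|^{1/\alpha})$ — the $\sup$ over multi-indices $l$ of $L^{|l|\alpha}(l!)^{-\alpha}(2\pi|k_j|)^{|l|}$ is maximized near $|l| \sim |k_j|$ and gives this exponential. Hence to have $|f_j|_{\alpha,L} \MP \varepsilon_j$ I must take $\lambda_j \EP \varepsilon_j \exp(-\cdot |k_j|^{1/\alpha})$, and I want $\lambda_j$ as large as this allows, so I set $\lambda_j = \varepsilon_j \exp(-\cdot|k_j|^{1/\alpha})$, which gives $\varepsilon_j \MP |f_j|_{\alpha,L} \MP \varepsilon_j$ as required. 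The remaining freedom is the pairing between $\varepsilon_j$ and the scale $Q_j$: I would choose $\varepsilon_j$ so that $\Delta_\omega^*(c\varepsilon_j^{-1})$ is comparable to $|k_j| \EP Q_j$, i.e. take $\varepsilon_j \EP \Delta_\omega(Q_j)^{-1} = (Q_j \Psi_\omega(Q_j))^{-1}$ along the sequence $Q_j \to \infty$. With this choice $\lambda_j |k_j| \EP \varepsilon_j |k_j| \exp(-\cdot|k_j|^{1/\alpha}) \EP \varepsilon_j \exp(-\cdot \Delta_\omega^*(\cdot\varepsilon_j^{-1})^{1/\alpha})$ (absorbing the polynomial factor $|k_j|$ into a slightly smaller constant in the exponent), and the two-sided drift estimate $|\Pi_F(I(t)-I_0)| \EP \lambda_j|k_j||t|$ becomes exactly the assertion of the theorem.

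The main obstacle — and the only genuinely delicate point — is making the drift estimate two-sided over a large enough time interval: the solution is a difference of two cosines, hence bounded in $t$, so the lower bound $|t|\varepsilon_j\exp(-\cdots) \MP |\Pi_F(I(t)-I_0)|$ can only hold up to times $|t| \MP |k_j\cdot\omega|^{-1} = \Psi_\omega(Q_j)$. One must check that this window is compatible with the intended reading of the statement (it is: the claim is an identity of orders valid along the orbit for the relevant range of $|t|$, and $\Psi_\omega(Q_j)$ is itself large), and one must pick the initial phase $\theta_0 = \theta_0(j)$ so that the linear term in the Taylor expansion of $\cos(2\pi k_j\cdot\theta(t))$ at $t=0$ is non-degenerate, i.e. $\sin(2\pi k_j\cdot\theta_0)$ bounded away from $0$; then on $|t| \leq \tfrac{1}{10}|k_j\cdot\omega|^{-1}$ the cosine difference is $\EP |k_j\cdot\omega|\,|t|$ with uniform constants, giving both bounds. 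The rest — verifying $f_j \in G^{\alpha,L}(\mathcal{D}_R)$ with the stated norm bounds, and that $\varepsilon_j \to 0$ (immediate from $Q_j\to\infty$ and $\Delta_\omega$ unbounded) — is routine, using only the Gevrey estimates for trigonometric monomials from the appendix.
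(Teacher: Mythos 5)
Your construction misses the key trick the paper uses to obtain an \emph{unbounded}, genuinely linear drift, and as a consequence the two-sided estimate you establish holds only over a time window that is far too short to give the conclusion of the theorem.

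Concretely, you take $f_j(\theta,I)=\lambda_j\cos(2\pi k_j\cdot\theta)$ and solve with the unperturbed angular flow $\theta(t)=\theta_0+t\omega$. Since $\omega$ is non-resonant in $F$ and $k_j\in\Z^n\cap F$ is nonzero, one has $k_j\cdot\omega\neq 0$, so $k_j\cdot\theta(t)$ precesses and the drift
\[
\Pi_F(I(t)-I_0)=\frac{\lambda_j k_j}{k_j\cdot\omega}\bigl(\cos(2\pi k_j\cdot\theta_0)-\cos(2\pi k_j\cdot\theta(t))\bigr)
\]
is \emph{bounded} in $t$ by $\lambda_j|k_j|/|k_j\cdot\omega|\EP\lambda_j\Delta_\omega(Q_j)\EP\lambda_j\varepsilon_j^{-1}\EP\exp(-\cdot|k_j|^{1/\alpha})$, which is exponentially small. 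You acknowledge this and propose to read the theorem only over $|t|\MP|k_j\cdot\omega|^{-1}=\Psi_\omega(Q_j)$. But this window is only polynomial in $\varepsilon_j^{-1}$, whereas the whole point of the theorem is to show that the exponentially long stability times of Theorem~\ref{stablin} are optimal; for that the drift must continue to grow linearly up to exponentially long times and reach macroscopic size. With your $f_j$ the action never moves more than $\exp(-\cdot|k_j|^{1/\alpha})$, so the example does not witness optimality. The lower bound $|t|\varepsilon_j\exp(-\cdots)\MP|\Pi_F(I(t)-I_0)|$ simply fails once $|t|\PS\Psi_\omega(Q_j)$.

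What the paper does differently, and what fixes this: the perturbation is taken to be $f_j=f_j^1+f_j^2$ with an additional \emph{integrable} linear piece $f_j^1(I)=(v_j-\omega)\cdot I$, where $v_j$ is the periodic (rational) approximation of $\omega$ built from a convergent of $\varpi_1$, and $k_j$ is chosen so that $k_j\cdot v_j=0$ exactly. Then $H_j=l_\omega+f_j=l_{v_j}+f_j^2$, the angular flow becomes $\theta(t)=\theta_0+tv_j$, the phase $k_j\cdot\theta(t)=k_j\cdot\theta_0$ is \emph{frozen}, and the action drifts linearly in $t$ without any bound:
\[
I(t)=I_0 - 2\pi t\,k_j\,\varepsilon_j\mu_j\cos(2\pi k_j\cdot\theta_0),
\]
which gives the two-sided estimate for \emph{all} $t$. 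The cost of the linear piece $f_j^1$ is precisely $|f_j^1|_{\alpha,L}\EP|\omega-v_j|\EP\varepsilon_j$, which dovetails with the choice $\varepsilon_j\EP\Delta_\omega(Q_j)^{-1}$. Your amplitude computation $\lambda_j\EP\varepsilon_j\exp(-\cdot|k_j|^{1/\alpha})$ and the replacement of $|k_j|$ by $\Delta_\omega^*(\cdot\varepsilon_j^{-1})$ are fine and identical to the paper's; the missing ingredient is the frequency shift $\omega\mapsto v_j$ encoded in $f_j^1$.
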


\begin{proof}
First recall the following fact from linear algebra: there exists an invertible square matrix $A$ of size $n$ with integer coefficients such that $\omega=A(\varpi,0) \in \R^d \times \R^m=\R^n$ with $\varpi \in \R^d$ non-resonant. So replacing $H$ by $H\circ \Phi_A$, where $\Phi_A: \T^n \times \R^n \rightarrow \T^n \times \R^n$ is the linear symplectic map given by $\Phi_A(\theta,I)=(A\theta,(A^{-1})^{t}I)$, we can assume that $\omega=(\varpi,0)$ as the statement for $H$ and $H\circ \Phi_A$ are equivalent (up to constants depending on $A$, and so on $n$ and $\omega$). Moreover, dividing $\varpi$ by $\pm|\varpi|$ and reordering its components if necessary, we can also assume that $\varpi=(1,\varpi,\dots,\varpi_{d-1})$, with $|\varpi_i|\leq 1$ for $1\leq i \leq d-1$.

So we have reduced the general case to 
\[ \omega=(1,\varpi_1, \dots, \varpi_{d-1},0)\in \R^d \times \R^m=\R^n, \quad F=\R^d \times \{0\}.\]
Let us denote by $(p_j/q_j)_{j\in\N}$ the sequence of the convergents of $\varpi_1$ for instance. We have the classical inequalities
\begin{equation*}
(q_j+q_{j+1})^{-1}<|q_j\varpi_1-p_j|<q_{j+1}^{-1}, \quad j\in\N, 
\end{equation*}
and since $q_{j+1}>q_j$, this gives
\begin{equation}\label{est1}
(2q_{j+1})^{-1}<|q_j\varpi_1-p_j|<q_{j+1}^{-1}, \quad j\in\N. 
\end{equation}
Now by definition of $\Psi_\omega$, we obtain
\begin{equation}\label{est2}
q_{j+1}<\Psi_\omega(q_j)<2q_{j+1}.
\end{equation}
The perturbation $f_j$ will be of the form
\[ f_j(\theta,I)=f_j^1(I)+f_j^2(\theta), \quad (\theta,I)\in \mathcal{D}_R. \]
First, we choose $f_j^1(I)=v_j\cdot I-\omega \cdot I$, where $v_j=(1,p_j/q_j,\varpi_2,\dots,\varpi_{d-1},0)$. We set
\[ \varepsilon_j=|\varpi_1-p_j/q_j|. \]
From the inequalities~\eqref{est1} and \eqref{est2}, recalling the definitions of $\Delta_\omega$ and $\Delta^*_\omega$, we have 
\begin{equation}\label{est4}
(\Delta_\omega(q_j))^{-1} \MP \varepsilon_j \MP (\Delta_\omega(q_j))^{-1}, \quad \Delta_\omega^*(\cdot \varepsilon_j^{-1}) < q_j < \Delta_\omega^*(\cdot \varepsilon_j^{-1}).
\end{equation}
Now by definition, $|f_j^1|_{\alpha,L}\EP \varepsilon_j$. Then, if we let $k_j=(p_j,-q_j,0,\dots,0)\in \Z^n$, we define $f_j^2(\theta)=\varepsilon_j\mu_j \sin(2\pi k_j\cdot\theta)$
with $\mu_j$ to be chosen. Note that $|k_j|=q_j$ since $|q_j|\geq|p_j|$ (as $|\varpi_1|\leq 1$). It is easy to estimate
\[ \varepsilon_j\mu_j \exp(\cdot |k_j|^{1/\alpha}) \MP |f_j^2|_{\alpha,L}\MP \varepsilon_j\mu_j \exp(\cdot |k_j|^{1/\alpha})   \]
and so 
\[ \varepsilon_j\mu_j \exp(\cdot q_j^{1/\alpha}) \MP |f_j^2|_{\alpha,L}\MP \varepsilon_j\mu_j \exp(\cdot q_j^{1/\alpha}). \]
Therefore we choose $\mu_j \EP \exp(-\cdot q_j^{1/\alpha})$ so that $\varepsilon_j \MP |f_j^2|_{\alpha,L} \MP \varepsilon_j$. Finally, $f_j=f^1_j+f^2_j$ so $\varepsilon_j \MP |f_j|_{\alpha,L} \MP \varepsilon_j$, and $\varepsilon_j \rightarrow 0$ when $j\rightarrow +\infty$. Now we can write the Hamiltonian
\begin{eqnarray*}
H_j(\theta,I) & = & l_\omega(I)+f_j(\theta,I) \\
& = & \omega\cdot I+v_j\cdot I-\omega \cdot I+\varepsilon_j\mu_j \sin(2\pi k_j\cdot\theta) \\
& = & v_j\cdot I+\varepsilon_j\mu_j \sin(2\pi k_j\cdot\theta) 
\end{eqnarray*}
and as $k_j.v_j=0$, the associated system is easily integrated:
\begin{equation*} 
\left\{  
\begin{array}{ccl}
\theta(t) & = & \theta_0+tv_j \quad [\Z^n]\\
 I(t) &= &I_0-t2\pi k_j\varepsilon_j\mu_j\cos(2\pi k_j.\theta_0).
\end{array}
\right. 
\end{equation*}
Choosing any solution with initial condition $(\theta_0,I_0)$ satisfying $k_j.\theta_0=0$, $\cos(2\pi k_j.\theta_0)=1$ and using the fact that $|k_j|=q_j$ and $\mu_j \EP \exp(-\cdot q_j^{1/\alpha})$, we obtain
\[ |I(t)-I_0|\EP|t|\varepsilon_jq_j\exp(-\cdot q_j^{1/\alpha}). \]
For $j\PS 1$, $q_j\PS 1$ and we have $\exp(-\cdot q_j^{1/\alpha}) < q_j\exp(-\cdot q_j^{1/\alpha}) < \exp(-\cdot q_j^{1/\alpha})$ for well-chosen implicit constants, hence
\[ |t|\varepsilon_j\exp(-\cdot q_j^{1/\alpha})\MP |I(t)-I_0|\MP|t|\varepsilon_j\exp(-\cdot q_j^{1/\alpha}). \]
Using~\eqref{est4} this gives
\[ |t|\varepsilon_j\exp\left(-\cdot(\Delta_\omega^*(\cdot \varepsilon_j^{-1}))^{\frac{1}{\alpha}}\right) \MP |I(t)-I_0| \MP |t|\varepsilon_j\exp\left(-\cdot(\Delta_\omega^*(\cdot \varepsilon_j^{-1}))^{\frac{1}{\alpha}}\right) \]
and as the vector $k_j$ belongs to $F=\R^d \times \{0\}$, the solution we have constructed satisfy $\Pi_F(I(t)-I_0)=I(t)-I_0$. This concludes the proof.
\end{proof}

\paraga For a perturbation of a non-linear integrable Hamiltonian as in~\eqref{Hnonlin}, we have a stability result similar to Theorem~\ref{stablin}. 

\begin{theorem}\label{stabnonlin}
Under the notations and assumptions of Theorem~\ref{thmnonlin}, let $(\theta(t),I(t))$ be a solution of the Hamiltonian system defined by $H$, with $(\theta_0,I_0)\in \mathcal{D}_{r/4}$ and let $T_0$ be the smallest $t \in ]0,+\infty]$ such that $(\theta(t),I(t))\notin \mathcal{D}_{r/4}$. Then we have the estimates
\[ |\Pi_F(I(t)-I_0)| \leq \delta, \quad |t|\MP \min \left\{T_0, \delta r^{-2}\exp\left(\cdot\Delta_\omega^*(\cdot r^{-1})^{\frac{1}{\alpha}}\right)\right\} \]
for any $r(\Delta_\omega^*(\cdot r^{-1}))^{-1} \MP \delta\MP r$. Moreover, if $F=\R^n$, then 
\[ |I(t)-I_0| \leq \delta, \quad |t|\MP \delta r^{-2}\exp\left(\cdot\Delta_\omega^*(\cdot r^{-1})^{\frac{1}{\alpha}}\right). \]
\end{theorem}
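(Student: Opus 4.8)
The plan is to follow the proof of Theorem~\ref{stablin} line by line, replacing Theorem~\ref{thmlin} by Theorem~\ref{thmnonlin} and carrying along the $r$-dependent weights that appear in~\eqref{estnonlindist}--\eqref{estnonlin}. First I would invoke Theorem~\ref{thmnonlin}: under~\eqref{thr2} it produces a symplectic map $\Phi\in G^{\alpha}(\mathcal{D}_{r/2},\mathcal{D}_r)$ with $\tilde{H}:=H\circ\Phi=h+g+[f]_\omega+\tilde{f}$ and $\{g,l_\omega\}=0$. Taking $l=0$ in~\eqref{estnonlindist} gives $|\Phi_I-\mathrm{Id}_I|_{C^0(\mathcal{D}_{r/2})}\MP r\,\Delta_\omega^*(cr^{-1})^{-1}$, which is much smaller than $r$ because $\Delta_\omega^*(cr^{-1})\geq c_6$ with $c_6$ large; hence $\Phi(\mathcal{D}_{r/2})\supseteq\mathcal{D}_{r/4}$ and $\Phi^{-1}(\mathcal{D}_{r/4})\subseteq\mathcal{D}_{r/2}$. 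So it suffices to prove the estimate for a solution of $\tilde{H}$ starting in $\Phi^{-1}(\mathcal{D}_{r/4})\subseteq\mathcal{D}_{r/2}$, with $T_0$ the exit time from $\mathcal{D}_{r/2}$; since $|\Phi-\mathrm{Id}|_{C^0}\MP r\,\Delta_\omega^*(\cdot r^{-1})^{-1}\MP\delta$ (this is exactly where the lower bound $\delta\PS r\,\Delta_\omega^*(\cdot r^{-1})^{-1}$ is used) and $\delta\MP r$, the conclusion for $\tilde{H}$ transfers back to $H$ with different implicit constants.

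Then I would extract the drift of $\Pi_F I$. Along the flow of $\tilde{H}$ one has $\dot{I}=-\partial_\theta\tilde{H}=-\partial_\theta(g+[f]_\omega+\tilde{f})$ since $h$ depends only on $I$. Because $\{g,l_\omega\}=0$, the function $g$ is invariant under $X_\omega^t$, so every non-zero $\theta$-Fourier mode $k$ of $g$ satisfies $k\cdot\omega=0$, hence $k\in F^\perp$ (using that $F$ is the smallest rational subspace containing $\omega$); therefore $\partial_\theta g(\theta,I)\in F^\perp$, and the same holds for $[f]_\omega$. Projecting the equation of motion onto $F$ and integrating,
\[ |\Pi_F(I(t)-I_0)|\leq|t|\,|\partial_\theta\tilde{f}|_{C^0(\mathcal{D}_{r/2})},\qquad|t|\leq T_0. \]
For the size of $\partial_\theta\tilde{f}$ I would take $l=(e_i,0)$ in~\eqref{estnonlin} and use the definition~\eqref{norm} of the Gevrey norm (since $|l|=1$ and $l!=1$, $|\partial_{\theta_i}\tilde{f}|_{C^0}\MP|\tilde{f}|_{\alpha,\tilde{L},(e_i,0)}$), getting $|\partial_\theta\tilde{f}|_{C^0(\mathcal{D}_{r/2})}\MP r^{2}\exp(-c_{10}\Delta_\omega^*(cr^{-1})^{1/\alpha})$. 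Thus $|\Pi_F(I(t)-I_0)|\MP|t|\,r^{2}\exp(-\cdot\,\Delta_\omega^*(\cdot r^{-1})^{1/\alpha})$, which is $\leq\delta$ as soon as $|t|\MP\delta r^{-2}\exp(\cdot\,\Delta_\omega^*(\cdot r^{-1})^{1/\alpha})$; together with the constraint $|t|\leq T_0$ this is the first estimate. If moreover $F=\R^n$, then $\Pi_F=\mathrm{Id}$ directly bounds $|I(t)-I_0|\leq\delta$, and since $\delta\MP r$ while $I_0$ is interior to $B_{r/2}$ (it lies in $\Phi^{-1}(B_{r/4})$), the solution of $\tilde{H}$ cannot leave $\mathcal{D}_{r/2}$ before time $\PS\delta r^{-2}\exp(\cdot\,\Delta_\omega^*(\cdot r^{-1})^{1/\alpha})$, so $T_0$ already exceeds this bound and the $\min$ may be dropped.

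I do not expect a genuine obstacle: all the analytic content is packaged in Theorem~\ref{thmnonlin}, and this deduction is classical, being the non-linear, $r$-scaled version of the argument already written out for Theorem~\ref{stablin}. The only two points needing a little care are (i) that the displacement $|\Phi_I-\mathrm{Id}_I|_{C^0}\MP r\,\Delta_\omega^*(cr^{-1})^{-1}$ is genuinely small compared with the radius $r/4$, so that the passage between $\mathcal{D}_{r/4}$ and $\mathcal{D}_{r/2}$ through $\Phi$ and $\Phi^{-1}$ is legitimate (this is what forces the hypothesis $\delta\PS r\,\Delta_\omega^*(\cdot r^{-1})^{-1}$), and (ii) the elementary fact that $\{g,l_\omega\}=0$ makes $\partial_\theta g$ take values in $F^\perp$; both are routine, and as in the linear case the first part of~\eqref{estnonlin} (the bound on $g$ itself) is not needed.
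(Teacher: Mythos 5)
Your argument is correct and reproduces exactly what the paper intends: the paper explicitly declines to write out a proof of Theorem~\ref{stabnonlin}, stating it is ``entirely similar'' to that of Theorem~\ref{stablin} with Theorem~\ref{thmnonlin} in place of Theorem~\ref{thmlin}, and your line-by-line transposition — using the $l=0$ case of~\eqref{estnonlindist} for the $C^0$ displacement of $\Phi_I$, the $|l|=1$, $l_2=0$ case of~\eqref{estnonlin} for $|\partial_\theta\tilde f|_{C^0}$, and the observation that $\{g,l_\omega\}=0$ forces $\partial_\theta g\in F^\perp$ — is precisely the deduction the author has in mind, including the role of the hypothesis $r\,\Delta_\omega^*(\cdot r^{-1})^{-1}\MP\delta\MP r$ in passing between $\mathcal D_{r/4}$ and $\mathcal D_{r/2}$.
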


\begin{corollary}\label{stabnonlinD}
Under the notations and assumptions of Corollary~\ref{thmnonlinD}, let $(\theta(t),I(t))$ be a solution of the Hamiltonian system defined by $H$, with $(\theta_0,I_0)\in \mathcal{D}_{r/4}$ and let $T_0$ be the smallest $t \in ]0,+\infty]$ such that $(\theta(t),I(t))\notin \mathcal{D}_{r/4}$. Then we have the estimates
\[ |\Pi_F(I(t)-I_0)| \MP \delta, \quad |t|\MP \min \left\{T_0, \delta r^{-2}\exp\left(\cdot(\gamma r^{-1})^{\frac{1}{\alpha(1+\tau)}}\right)\right\} \]
for any $r(\gamma^{-1}r)^{\frac{1}{1+\tau}} \MP \delta\MP r$. Moreover, if $F=\R^n$, then 
\[ |I(t)-I_0| \MP \delta, \quad |t|\MP \delta r^{-2}\exp\left(\cdot(\gamma r^{-1})^{\frac{1}{\alpha(1+\tau)}}\right). \]
\end{corollary}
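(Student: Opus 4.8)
The plan is to combine Theorem~\ref{stabnonlin} with the explicit Diophantine lower bound on $\Delta_\omega^*$ that was already used to pass from Theorem~\ref{thmlin} to Corollary~\ref{thmlinD}. Recall that for $\omega \in \Omega_d(\gamma,\tau)$ we have $\Psi_\omega(Q) \leq \gamma^{-1}Q^\tau$, hence $\Delta_\omega(Q)=Q\Psi_\omega(Q) \leq \gamma^{-1}Q^{1+\tau}$, which by definition of $\Delta_\omega^*$ gives the lower bound
\[ \Delta_\omega^*(x) \geq (\gamma x)^{\frac{1}{1+\tau}}. \]
Applying this with $x = c\,r^{-1}$ yields $\Delta_\omega^*(c r^{-1}) \geq (c\gamma r^{-1})^{\frac{1}{1+\tau}} \EP (\gamma r^{-1})^{\frac{1}{1+\tau}}$, and correspondingly $\Delta_\omega^*(c r^{-1})^{-1} \MP (\gamma^{-1} r)^{\frac{1}{1+\tau}}$. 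Note also that the smallness condition \eqref{thr3}, namely $r \leq c c_6^{-(1+\tau)}\gamma$, is exactly what is needed to guarantee $\Delta_\omega^*(c r^{-1}) \geq c_6$, so that the hypothesis \eqref{thr2} of Theorem~\ref{thmnonlin} (and hence of Theorem~\ref{stabnonlin}) is met; this is the same bookkeeping already carried out in the proof of Corollary~\ref{thmlinD}.

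It then remains to substitute these bounds into the conclusion of Theorem~\ref{stabnonlin}. In the exponent, $\Delta_\omega^*(c r^{-1})^{\frac{1}{\alpha}} \PS (\gamma r^{-1})^{\frac{1}{\alpha(1+\tau)}}$, so the exponential factor $\exp(\cdot\,\Delta_\omega^*(\cdot\,r^{-1})^{\frac{1}{\alpha}})$ is bounded below by $\exp(\cdot\,(\gamma r^{-1})^{\frac{1}{\alpha(1+\tau)}})$ for a suitable implicit constant, which is what appears in the statement. For the range of admissible $\delta$, the lower endpoint $r\,\Delta_\omega^*(\cdot\,r^{-1})^{-1}$ is $\MP r(\gamma^{-1}r)^{\frac{1}{1+\tau}}$, so any $\delta$ with $r(\gamma^{-1}r)^{\frac{1}{1+\tau}} \MP \delta \MP r$ is in particular of the form $r\,\Delta_\omega^*(\cdot\,r^{-1})^{-1} \MP \delta \MP r$ required by Theorem~\ref{stabnonlin} (again up to adjusting implicit constants). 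The conclusion on $|\Pi_F(I(t)-I_0)|$, and in the non-resonant case $F=\R^n$ on $|I(t)-I_0|$, follows immediately.

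There is essentially no obstacle here: the corollary is a purely mechanical specialization, exactly parallel to the passage from Theorem~\ref{thmlin} to Corollary~\ref{thmlinD} and from Theorem~\ref{stablin} to Corollary~\ref{stablinD}. The only point requiring a small amount of care is to check that the inequalities between the various $\MP$, $\PS$, $\EP$ expressions go in the right direction when they sit inside an exponential or in the definition of an admissible range for $\delta$ — in particular that replacing $\Delta_\omega^*(c r^{-1})$ by its lower bound $(c\gamma r^{-1})^{\frac{1}{1+\tau}}$ weakens the time estimate (as it must, since a smaller $\Delta_\omega^*$ means a shorter guaranteed stability time) and correctly enlarges the lower threshold on $\delta$. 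Since all the implicit constants are allowed to depend on $n,R,\omega,\alpha,L$ and on $\tau$ through $\omega$, absorbing the finitely many numerical factors $c, c_6$ introduced along the way is harmless.
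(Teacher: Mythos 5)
Your proof is correct and follows exactly the route the paper intends: the paper never writes out this corollary's proof, treating it (like Corollary~\ref{thmlinD} and Corollary~\ref{stablinD}) as a mechanical substitution of the Diophantine lower bound $\Delta_\omega^*(c r^{-1}) \geq (c\gamma r^{-1})^{\frac{1}{1+\tau}}$ into Theorem~\ref{stabnonlin}, and you carry out precisely that substitution, including the check that the threshold $r \leq cc_6^{-(1+\tau)}\gamma$ from~\eqref{thr3} implies~\eqref{thr2} and that the replacement goes the right way inside both the exponential and the admissible $\delta$-range.
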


The proof of Theorem~\ref{stabnonlin} is entirely similar to the proof of Theorem~\ref{stablin}, replacing the use of Theorem~\ref{thmlin} by the use of Theorem~\ref{thmnonlin}, since the only information we need to derive these estimates from the resonant normal form with a remainder are $C^0$ estimates for the distance to the identity (with respect to the action variables) of the conjugacy $\Phi$ and $C^0$ estimates for the partial derivatives (with respect to the angle variables) of the remainder $\tilde{f}$, and these information are contained in Theorem~\ref{thmnonlin}. Therefore we shall not repeat the details. 

These local estimates of stability (or variants of them) are at the basis of global (in phase space) estimates of stability for perturbations of non-linear integrable systems, provided that the integrable part satisfy some geometric assumption. However, we don't know if it is possible to construct an example such as in Theorem~\ref{explin} that would show that the estimates of Theorem~\ref{stabnonlin} are ``essentially" optimal.

\section{Application to the splitting of invariant manifolds}\label{s3}

In this section, we apply our normal form results to a different but ultimately related problem, which is the so-called ``splitting" of invariant manifolds (or ``splitting" of ``separatrices"). Roughly speaking, if a Hamiltonian system $H$ as in \eqref{Hnonlin} has a suitable ``hyperbolic" invariant torus for which the stable and unstable manifolds intersect, the problem is to evaluate in some sense the ``angle" between these invariant manifolds at the intersection point. This is an important problem in itself, and this is also deeply connected to the problem of the speed of instability for the action variables (which is known as the speed of Arnold diffusion). 

The general principle is that the ``splitting" is exponentially small for analytic systems and the literature on the subject is huge. Here we shall closely follows \cite{LMS03}, Chapter \S $2$, where an approach to obtain exponentially small upper bounds in the analytic case is given based on normal forms techniques. The results contained in \cite{LMS03} are quite general, as they are valid for any number of degrees of freedom, any Diophantine frequency vector and without any restriction on the perturbation, assuming the torus exists and that its stable and unstable manifolds intersect. However, because of this great generality, these results are not very accurate, they only give an upper bound with a reasonable value for the exponent in the exponential factor. Much more accurate results, such as an asymptotic formula and reasonable values for the other constants involved, can be obtained in much more restricted situations (essentially for two degrees of freedom, specific frequency vectors and specific perturbations, see \cite{BFGS12} for some recent results and references). 

Our aim here is to generalize the results of \cite{LMS03} for Hamiltonian systems which are only Gevrey regular. We will also assume the existence of a ``hyperbolic" torus, together with the property that its invariant manifolds intersect (that is, the existence of homoclinic orbits), simply because conditions that ensure the existence of these objects are well-known (we will quote some of these conditions).  

\paraga We will not try to give an abstract definition of a hyperbolic torus for a Hamiltonian system, first because we will deal with rather concrete examples, but also because we could not find any satisfactory abstract definition (for instance one which would ensure the existence and uniqueness of stable and unstable manifolds, we refer to \cite{LMS03} and \cite{BT00} for some attempts).

Now let us consider the setting as described in~\eqref{Hnonlin}. Let $\mathcal{T}_0=\T^n \times \{0\}$ be the invariant torus, for the integrable system, with frequency $\omega\in \R^n \setminus\{0\}$. Without loss of generality (as we explained in the proof of Theorem~\ref{explin}), we may already assume that $\omega=(\varpi,0)\in \R^d\times \R^m=\R^n$ with $\varpi \in \R^d$ non-resonant.

If $\omega \in \Omega_n(\gamma,\tau)$, by KAM theory this torus will persist under any sufficiently small and regular perturbation, provided that $h$ is non-degenerate (in the analytic case, the assumption that $\omega \in \Omega_n(\gamma,\tau)$ can be weakened). This is not true if $d \leq n-1$; however, under appropriate assumptions on the system, one can still apply KAM theory to prove the existence of an invariant $d$-dimensional torus, with frequency $\varpi$, which is hyperbolic in the sense that it possesses stable and unstable manifolds. Moreover, such a tori will be isotropic and its asymptotic manifolds will be Lagrangian. If the stable and unstable manifolds intersect, following \cite{LMS03}, we can define a symmetric matrix of size $n$ at a given homoclinic point, called a splitting matrix, the eigenvalues of which are called the splitting angles. Our result is that there exists at least $d$ splitting angles which are exponentially small (see Theorem~\ref{thmsplit}). The proof of this result will be analogous to the one in \cite{LMS03}. However, in \cite{LMS03}, it seems that the normal forms they used, which are taken from \cite{Pos93}, are not accurate enough to derive the result they claimed (see Remark~\ref{rem}).

Before stating precisely the result, we need some rather lengthy preparations in order to introduce our definitions and assumptions (the validity of our assumptions will be briefly discussed after the statement of Theorem~\ref{thmsplit}). 

First let us split our angle-action coordinates $(\theta,I) \in \mathcal{D}_R=\T^n \times B_R$ accordingly to  $\omega=(\varpi,0)\in \R^d\times \R^m=\R^n$: we write $\theta=(\theta_1,\theta_2) \in \T^d \times \T^m$ and $I=(I_1,I_2) \in B_R^d \times B_R^m$, where $B_R^d=B_R \cap \R^d$ and $B_R^m=B_R \cap \R^m$. We shall always assume here that $m\geq 1$, that is $d\leq n-1$. Then Theorem~\ref{thmnonlin} states that for the value $r=2\sqrt{\varepsilon}$, if 
\[\Delta_{\omega}^*(\cdot\sqrt{\varepsilon}^{-1}) \PS 1,\] 
there exists a symplectic map $\Phi \in G^{\alpha}(\mathcal{D}_{r/2},\mathcal{D}_{r})$, satisfying the estimate~\eqref{estnonlindist}, such that
\[ H \circ \Phi = h +[f]_\omega+ g + \tilde{f}, \quad [f]_\omega(\theta_2,I)=\int_{\theta_1 \in \T^d}f(\theta,I)d\theta_1, \quad g(\theta,I)=g(\theta_2,I)  \]
with $g$ and $\tilde{f}$ satisfying the estimates~\eqref{estnonlin}. As in the proof of Theorem~\ref{thmnonlin}, it will be more convenient to use a rescaled version of the Hamiltonian $H \circ \Phi$. Consider the map
\[ \sigma : (\theta,I) \longmapsto (\theta,\sqrt{\varepsilon}I) \]
which sends the domain $\mathcal{D}_{1}$ onto $\mathcal{D}_{r/2}$, and let 
\[ \mathcal{H}=\varepsilon^{-1}(H\circ \Phi \circ \sigma)=\varepsilon^{-1} h\circ \sigma +\varepsilon^{-1}[f]_\omega \circ \sigma + \varepsilon^{-1} g \circ \sigma + \varepsilon^{-1} \tilde{f} \circ \sigma\] 
be the rescaled Hamiltonian, which is defined on $\mathcal{D}_{1}$. Note that in the proof of Theorem~\ref{thmnonlin}, we used the same scaling map but the scaling factor was $\sqrt{\varepsilon}^{-1}$ in order to bring the linear part of $h$ to order one, while here the scaling factor is $\varepsilon^{-1}$ since it is the quadratic part of $h$ we want to bring to order one. Consequently, the solutions of the Hamiltonian system defined by $\mathcal{H}$ coincide with those of $H\circ \Phi$ only after scaling time by $\sqrt{\varepsilon}$: since we will be only interested in invariant and asymptotic manifolds, it makes no difference to consider $\mathcal{H}$ instead of $H \circ \Phi$. Now let us define
\begin{equation}\label{mu}
 \lambda=\lambda(\varepsilon)=(\Delta_{\omega}^*(\cdot\sqrt{\varepsilon}^{-1}))^{-1}, \quad \mu=\mu(\varepsilon)=\exp(-\cdot\lambda(\varepsilon)^{-\frac{1}{\alpha}})=\exp(-\cdot\lambda^{-\frac{1}{\alpha}}). 
\end{equation} 
For any $l=(l_1,l_2)\in \N^{2n}$, we have
\[ \partial^l (g\circ \sigma)=\partial_\theta^{l_1}\partial_I^{l_2}(g\circ \sigma)=\sqrt{\varepsilon}^{|l_2|}(\partial^lg)\circ \sigma\] 
so
\[ |\partial^l (g\circ\sigma)|_{C^0(\mathcal{D}_{1})} \leq \sqrt{\varepsilon}^{|l_2|} |\partial^l g|_{C^0(\mathcal{D}_{r/2})} \]
and therefore
\[ |g\circ\sigma|_{\alpha,\tilde{L},l} \leq \sqrt{\varepsilon}^{|l_2|} |g|_{\alpha,\tilde{L},l}.\]
Now using~\eqref{estnonlin}, this gives
\[ |g\circ\sigma|_{\alpha,\tilde{L},l} \MP \sqrt{\varepsilon}^{|l_2|} \varepsilon (\sqrt{\varepsilon})^{-|l_2|} \lambda \EP  \varepsilon \lambda  \]
and hence
\begin{equation}\label{termg}
|g\circ\sigma|_{\alpha,\tilde{L}} \MP \varepsilon \lambda. 
\end{equation}
In the same way,
\begin{equation}\label{termf}
|\tilde{f}\circ\sigma|_{\alpha,\tilde{L}} \MP \varepsilon\mu.
\end{equation}
We will further decompose $h$ and $[f]_\omega$ as follows. Without loss of generality, we can assume that $h(0)=0$. Then by Taylor's formula, we can expand $h$ at $I=0$ at order $2$:
\[ h(\sqrt{\varepsilon}I)=\sqrt{\varepsilon}\varpi\cdot I_1+\varepsilon AI_1\cdot I_1+\varepsilon BI_2\cdot I_2+\varepsilon CI_1\cdot I_2+\varepsilon\sqrt{\varepsilon}R_h(I)  \]
where 
\[ R_h(I)=2^{-1}\int_{0}^{1}(1-t)^2\nabla^3 h(t\sqrt{\varepsilon} I)(I,I,I) dt, \]
$A$ and $B$ are square matrix of size respectively $d$ and $m$, and $C$ is a matrix of size $m$ times $d$. The rectangular term represented by $CI_1\cdot I_2$ is slightly inconvenient, so we will assume:

\medskip

(A.1) $C=0$.

\medskip

Then we expand $[f]_\omega$ at $I=0$ at order $0$:
\[ [f]_\omega(\theta_2,\sqrt{\varepsilon}I)=[f]_\omega(\theta_2,0)+\sqrt{\varepsilon}\int_{0}^{1}\nabla_I [f]_\omega(\theta_2,t\sqrt{\varepsilon} I)I dt=\tilde{V}(\theta_2)+\sqrt{\varepsilon}R_{[f]_\omega}(\theta_2,I). \]   
and so
\[ \varepsilon^{-1}[f]_\omega\circ\sigma=\varepsilon^{-1}\tilde{V}+\sqrt{\varepsilon}^{-1}R_{[f]_\omega}=V+\sqrt{\varepsilon}^{-1}R_{[f]_\omega}. \]
Thus we have
\begin{eqnarray*}
\mathcal{H}(\theta,I) & = & \sqrt{\varepsilon}^{-1}\varpi\cdot I_1+AI_1\cdot I_1+BI_2\cdot I_2+V(\theta_2) \\
& + & (\sqrt{\varepsilon}R_h(I)+\sqrt{\varepsilon}^{-1}R_{[f]_\omega}(\theta_2,I)+\varepsilon^{-1} g(\theta_2,\sqrt{\varepsilon}I))+\varepsilon^{-1}f(\theta,\sqrt{\varepsilon}I). 
\end{eqnarray*}
Applying Lemma~\ref{deriv} (respectively with $s=3$ and $s=1$), we obtain $|R_h|_{\alpha,L'} \MP 1$ and $|R_{[f]_\omega}|_{\alpha,L'}\MP \varepsilon$, for $L'=\tilde{L}/2$, so that the first two terms in the parenthesis above (which are the Hamiltonians independent of $\theta_1$) are of order $\sqrt{\varepsilon}$. However, the last term in this parenthesis is of order $\lambda\geq \sqrt{\varepsilon}$ by~\eqref{termg}, so we define
\[ R= \lambda^{-1}\sqrt{\varepsilon}R_h +\lambda^{-1}\sqrt{\varepsilon}^{-1}R_{[f]_\omega} +\lambda^{-1}\varepsilon^{-1}g\circ\sigma, \] 
so that the Hamiltonian in the parenthesis above is $\lambda R$, with $|R|_{\alpha,L'}\MP 1$. Let us also write
\[ F=\varepsilon^{-1}\mu^{-1}\tilde{f}\circ\sigma \]
so that the last term in the expression of $\mathcal{H}$ is $\mu F$, with $|F|_{\alpha,L'}\leq |F|_{\alpha,\tilde{L}}\MP 1$ by~\eqref{termf}. Finally, we obviously have $|V|_{\alpha,L'} \leq 1$.

Now recalling the dependence of $\lambda$ and $\mu$ in the notation, we have obtained a Hamiltonian $\mathcal{H}=\mathcal{H}_{\lambda,\mu}$, defined on $\mathcal{D}_1$, of the form
\begin{equation}\label{Hres}
\begin{cases} 
\mathcal{H}_{\lambda,\mu}(\theta,I)=\mathcal{H}_{\lambda}(\theta_2,I)+\mu F(\theta,I), \quad |F|_{\alpha,L'}\MP 1  \\
\mathcal{H}_{\lambda}(\theta_2,I)=\mathcal{H}^{av}(\theta_2,I)+\lambda R(\theta_2,I), \quad |R|_{\alpha,L'} \MP 1\\
\mathcal{H}^{av}(\theta_2,I)=\sqrt{\varepsilon}^{-1}\varpi\cdot I_1+AI_1\cdot I_1+BI_2\cdot I_2+V(\theta_2), \quad |V|_{\alpha,L'} \leq 1.
\end{cases}
\end{equation}  
It will be convenient to consider $\mathcal{H}_{\lambda,\mu}$ as a Hamiltonian depending on two independent parameters $\lambda$ and $\mu$, and as a rule the notation $\lambda(\varepsilon)$ and $\mu(\varepsilon)$ will be used only when we want to recall that they both depend on $\varepsilon$ as in~\eqref{mu}. One has to consider $\mathcal{H}_{\lambda,\mu}$ as an arbitrary $\mu$-perturbation of $\mathcal{H}_{\lambda}=\mathcal{H}_{\lambda,0}$, and $\mathcal{H}_{\lambda}$ as a special $\lambda$-perturbation of the ``averaged" system $\mathcal{H}^{av}$ (special because the perturbation $R$ is independent of $\theta_1$). Note that the averaged system can be further decomposed as a sum of two Hamiltonians
\[ \mathcal{H}^{av}(\theta_2,I)=K(I_1)+P(\theta_2,I_2),   \]
where $K(I_1)=\sqrt{\varepsilon}^{-1}\varpi\cdot I_1+AI_1\cdot I_1$ is a completely integrable system on $\mathcal{D}_1^d=\T^d \times B_1^d$ and $P(\theta_2,I_2)=BI_2\cdot I_2+V(\theta_2) $ is a mechanical system (or a ``multidimensional pendulum") on $\mathcal{D}_1^m=\T^m \times B_1^m$. We make the following assumption on the mechanical system:

\medskip

(A.2) The matrix $B$ is positive definite (or negative definite), and the function $V : \T^m \rightarrow \R$ has a non-degenerated maximum (or minimum).

\medskip

Without loss of generality, we may assume that $V$ reaches its maximum at $\theta_2=0$, so that $O=(0,0)\in \mathcal{D}_1^m$ is a hyperbolic fixed point for the Hamiltonian flow generated by $P$. This in turns implies that, for any $I^* \in B_1^d$, the set $\mathcal{T}(I^*)=\{I_1=I_1^*\} \times O$ is a $d$-dimensional torus invariant for the averaged system, and it is hyperbolic in the sense that it has $C^1$ stable and unstable manifolds
\[ W^{\pm}(\mathcal{T}(I^*))=\{I_1=I_1^*\} \times W^{\pm}(O) \]
where $W^{\pm}(O)$ are the stable and unstable manifolds of $O$, which are Lagrangian. In particular, the torus $\mathcal{T}(0)$ is quasi-periodic with frequency $\sqrt{\varepsilon}^{-1}\varpi$.

Now this picture is easily seen to persist if we move from $\mathcal{H}^{av}$ to $\mathcal{H}_{\lambda}$. Indeed, since $\mathcal{H}_{\lambda}$ is still independent of $\theta_1$, the level sets of $I_1$ are still invariant, hence for a given $I_1^* \in B_1^d$, the Hamiltonian flow generated by the restriction of $\mathcal{H}_{\lambda}$ to $\{I_1=I_1^*\} \times \mathcal{D}_1^m$ (considered as a flow on $\mathcal{D}_1^m$ depending on $I_1^*$) is a $\lambda$-perturbation of the Hamiltonian flow generated by $P$: as a consequence it has a hyperbolic fixed point $O_\lambda(I_1^*) \in \mathcal{D}_1^m$ which is $\lambda$-close to $O$, for $\lambda$ small enough. Hence $\mathcal{T}_\lambda(I^*)=\{I_1=I_1^*\} \times O_\lambda(I_1^*)$ is invariant under the Hamiltonian flow of $\mathcal{H}_{\lambda}$, and it is hyperbolic with Lagrangian stable and unstable manifolds
\[ W^{\pm}(\mathcal{T}_\lambda(I^*))=\{I_1=I_1^*\} \times W^{\pm}(O_\lambda(I_1^*)). \] 
The torus $\mathcal{T}_\lambda(0)$ is still quasi-periodic with frequency $\sqrt{\varepsilon}^{-1}\varpi$. 

\begin{remark}\label{rem}
Let us point out here that in \cite{LMS03}, it is stated incorrectly that the size of the Hamiltonian we called $\lambda R$ is of order $\sqrt{\varepsilon}$ (as we mentioned above, this Hamiltonian is composed of three terms and only two of which are of order $\sqrt{\varepsilon}$, the last one being of order $\lambda(\varepsilon)$). Now in \cite{LMS03}, they made use of normal forms taken in \cite{Pos93} which contains estimates that are less accurate than ours (as we already explained, $\lambda(\varepsilon)$ is only of order one in \cite{Pos93}) and consequently these estimates do not allow to show the existence of the invariant torus for the system we called $\mathcal{H}_{\lambda}$.   
\end{remark}

Our next assumption concerns the persistence of the torus $\mathcal{T}_\lambda(0)$, as well as its stable and unstable manifolds, when we move from $\mathcal{H}_{\lambda}$ to $\mathcal{H}_{\lambda,\mu}$:  

\medskip

(A.3) For any $0 \leq \lambda \MP 1$ and $0 \leq \mu \MP \lambda$, the system $\mathcal{H}_{\lambda,\mu}$ has an invariant torus $\mathcal{T}_{\lambda,\mu}$, with $\mathcal{T}_{\lambda,0}=\mathcal{T}_{\lambda}=\mathcal{T}_\lambda(0)$, of frequency $\sqrt{\varepsilon}^{-1}\varpi$, with $C^1$ stable and unstable manifolds $W^{\pm}(\mathcal{T}_{\lambda,\mu})$ which are exact Lagrangian graphs over fixed relatively compact domains $U^{\pm} \subseteq \T^n$. Moreover, $W^{\pm}(\mathcal{T}_{\lambda,\mu})$ are $\mu$-close to $W^{\pm}(\mathcal{T}_{\lambda})$ for the $C^{1}$-topology.

\medskip

Let us denote by $S^{\pm}_{\lambda,\mu}$ generating functions for $W^{\pm}(\mathcal{T}_{\lambda,\mu})$ over $U^{\pm}$, that is if $V^{\pm}=U^{\pm} \times B_1$, then 
\[ W^{\pm}(\mathcal{T}_{\lambda,\mu}) \cap V^{\pm}=\{ (\theta_*,I_*) \in V^{\pm} \; | \: I_*=\partial_\theta S^{\pm}_{\lambda,\mu}(\theta_*)  \} \]
where $S^{\pm}_{\lambda,\mu} : U^{\pm} \rightarrow \R$ are $C^2$ functions. Since  $W^{\pm}(\mathcal{T}_{\lambda,\mu})$ are $\mu$-close to $W^{\pm}(\mathcal{T}_{\lambda})$ for the $C^{1}$-topology, the first derivatives of the functions $S^{\pm}_{\lambda,\mu}$ are $\mu$-close to the first derivatives of $S^{\pm}_{\lambda}=S^{\pm}_{\lambda,0}$ for the $C^1$-topology.

Then, in order to evaluate the splitting, we need the existence of orbits which are homoclinic to $\mathcal{T}_{\lambda,\mu}$:

\medskip

(A.4) For any $0 \leq \lambda \MP 1$ and $0 \leq \mu \MP \lambda$, the set $W^+(\mathcal{T}_{\lambda,\mu}) \cap W^-(\mathcal{T}_{\lambda,\mu}) \setminus \mathcal{T}_{\lambda,\mu}$ is non-empty. 

\paraga We can finally define the notions of splitting matrix and splitting angles, and state our results. 

The set $W^+(\mathcal{T}_{\lambda,\mu}) \cap W^-(\mathcal{T}_{\lambda,\mu}) \setminus \mathcal{T}_{\lambda,\mu}$ is invariant, so it consists of orbits of the Hamiltonian system defined by $\mathcal{H}_{\lambda,\mu}$. Let $\gamma_{\lambda,\mu}$ be one of this homoclinic orbit, and $p_{\lambda,\mu}=\gamma_{\lambda,\mu}(0)=(\theta_{\lambda,\mu},I_{\lambda,\mu})$ a homoclinic point. Since $p_{\lambda,\mu}$ is a homoclinic point, $\theta_{\lambda,\mu} \in U^+ \cap U^-$ and $\partial_\theta S^{+}_{\lambda,\mu}(\theta_{\lambda,\mu})=\partial_\theta S^{-}_{\lambda,\mu}(\theta_{\lambda,\mu})$. Then we can define the splitting matrix $M(\mathcal{T}_{\lambda,\mu},p_{\lambda,\mu})$ of $\mathcal{T}_{\lambda,\mu}$ at the point $p_{\lambda,\mu}$, as the symmetric square matrix of size $n$
\[ M(\mathcal{T}_{\lambda,\mu},p_{\lambda,\mu})= \partial_\theta^2 (S_{\lambda,\mu}^+-S_{\lambda,\mu}^-)(\theta_{\lambda,\mu}). \]
Moreover, for $1\leq i \leq n$, we define the splitting angles $a_i(\mathcal{T}_{\lambda,\mu},p_{\lambda,\mu})$ as the eigenvalues of the matrix $M(\mathcal{T}_{\lambda,\mu},p_{\lambda,\mu})$. Note that since the homoclinic point belongs to a homoclinic orbit, at least one of these angles is necessarily zero.

\begin{theorem}\label{thmsplit0}
Let $\mathcal{H}_{\lambda,\mu}$ be as in~\eqref{Hres}, and assume that (A.2), (A.3) and (A.4) are satisfied. Then, with the previous notations, we have the estimates
\[ |a_i(\mathcal{T}_{\lambda,\mu},p_{\lambda,\mu})| \MP \mu, \quad 1 \leq i \leq d.\]
\end{theorem}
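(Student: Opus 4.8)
The plan is to follow the strategy of \cite{LMS03}: for $\mu=0$ the Hamiltonian $\mathcal{H}_\lambda$ is independent of $\theta_1$, so $I_1$ is a first integral and the invariant manifolds of $\mathcal{T}_\lambda$ are confined to the slice $\{I_1=0\}$; this forces a whole block of the unperturbed splitting matrix to vanish \emph{exactly}, and the case $\mu>0$ is then a soft perturbation of this situation controlled by (A.3). Concretely, since $\mathcal{H}_\lambda$ does not depend on $\theta_1$, the discussion preceding the statement gives $W^\pm(\mathcal{T}_\lambda)=W^\pm(\mathcal{T}_{\lambda,0})\subseteq\{I_1=0\}$. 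By (A.3) with $\mu=0$ these are exact Lagrangian graphs over $U^\pm$ with generating functions $S^\pm_\lambda=S^\pm_{\lambda,0}$, and the inclusion $\{I_1=0\}$ reads $\partial_{\theta_1}S^\pm_\lambda\equiv 0$ on $U^\pm$, i.e.\ $S^\pm_\lambda$ depends on $\theta_2$ alone. Hence at every point of $U^\pm$ the Hessian $\partial^2_\theta S^\pm_\lambda$ — and therefore $\partial^2_\theta(S^+_\lambda-S^-_\lambda)$ — has its first $d$ rows and first $d$ columns equal to zero; in particular this symmetric matrix has rank at most $m$ everywhere on $U^+\cap U^-$, hence at least $n-m=d$ zero eigenvalues.

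Next I would bring in the perturbation. By (A.4) there is a homoclinic point $p_{\lambda,\mu}=(\theta_{\lambda,\mu},I_{\lambda,\mu})$ with $\theta_{\lambda,\mu}\in U^+\cap U^-$, and by definition $M:=M(\mathcal{T}_{\lambda,\mu},p_{\lambda,\mu})=\partial^2_\theta(S^+_{\lambda,\mu}-S^-_{\lambda,\mu})(\theta_{\lambda,\mu})$. Assumption (A.3) makes $W^\pm(\mathcal{T}_{\lambda,\mu})$ $\mu$-close to $W^\pm(\mathcal{T}_\lambda)$ in the $C^1$-topology, hence (as noted right after (A.3)) the maps $\nabla S^\pm_{\lambda,\mu}$ are $\mu$-close to $\nabla S^\pm_\lambda$ in $C^1$, so that $\partial^2_\theta S^\pm_{\lambda,\mu}$ differs from $\partial^2_\theta S^\pm_\lambda$ by $\MP\mu$ in operator norm, uniformly on $U^+\cap U^-$. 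Evaluating at $\theta_{\lambda,\mu}$ I would then write
\[ M=\partial^2_\theta(S^+_\lambda-S^-_\lambda)(\theta_{\lambda,\mu})+E,\qquad \|E\|\MP\mu, \]
where the first matrix is real symmetric and, by the previous paragraph, of rank at most $m$.

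Finally I would invoke Weyl's perturbation inequality: since $M$ and $\partial^2_\theta(S^+_\lambda-S^-_\lambda)(\theta_{\lambda,\mu})$ are both real symmetric, their ordered eigenvalues differ pairwise by at most $\|E\|$, which is $\MP\mu$. The $d$ vanishing eigenvalues of $\partial^2_\theta(S^+_\lambda-S^-_\lambda)(\theta_{\lambda,\mu})$ thus correspond to $d$ eigenvalues of $M$ of absolute value $\MP\mu$; relabelling these as the splitting angles $a_i(\mathcal{T}_{\lambda,\mu},p_{\lambda,\mu})$ with $1\le i\le d$ gives exactly the asserted estimate (one of them being the zero angle automatically present because $p_{\lambda,\mu}$ lies on a homoclinic orbit).

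The substance of the argument is entirely in the first step: recognising that the $\theta_1$-independence of $\mathcal{H}_\lambda$ — which is precisely what the sharper normal form of Theorem~\ref{thmnonlin} delivers, and where the weaker estimates used in \cite{LMS03} fall short (cf.\ Remark~\ref{rem}) — forces the $\theta_1$-block of the unperturbed splitting matrix to vanish identically rather than merely up to $O(\lambda)$. Everything afterwards is bookkeeping: one must only verify that the homoclinic point stays in the fixed common domain $U^+\cap U^-$ on which all the generating functions $S^\pm_\lambda$ and $S^\pm_{\lambda,\mu}$ are simultaneously defined (guaranteed by (A.3)--(A.4)), and that the $C^1$-closeness in (A.3) is strong enough to control the \emph{second} derivatives of the $S^\pm$, which it is by the very way (A.3) is phrased. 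I expect no serious obstacle beyond these verifications.
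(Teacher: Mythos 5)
Your proof is correct and follows essentially the same route as the paper (Lemma~\ref{lemma1}, Lemma~\ref{lemma2}, Lemma~\ref{lemma3}): the vanishing $\theta_1$-block of the unperturbed splitting matrix, which comes from the $\theta_1$-independence of $\mathcal{H}_\lambda$; the $\mu$-closeness of the Hessians supplied by (A.3); and Weyl's spectral stability inequality for symmetric matrices. Your choice to compare $M_{\lambda,\mu}$ with $\partial^2_\theta(S^+_\lambda-S^-_\lambda)$ evaluated at $\theta_{\lambda,\mu}$ itself, rather than with $M_\lambda$ at $\theta_\lambda$, is a small but worthwhile refinement: it uses that the $\theta_1$-block vanishes identically on $U^+\cap U^-$ (not merely at $\theta_\lambda$) and thereby sidesteps the implicit need, behind the paper's Lemma~\ref{lemma2}, to control the drift of the homoclinic point $\theta_{\lambda,\mu}$ from $\theta_\lambda$ together with a bound on third derivatives of $S^\pm_\lambda$.
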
 

We have used the fact that the stable and unstable manifolds are exact Lagrangian graphs over some domains in $\T^n$ in order to define the splitting matrix and splitting angles, but in fact only the Lagrangian property (and not the exactness nor the graph property) is necessary to make those definitions (see \cite{LMS03}).

Now the solutions of the Hamiltonian system defined by $H \circ \Phi$ differs from those of $\mathcal{H}_{\lambda(\varepsilon),\mu(\varepsilon)}$ only by a time change, so $\mathcal{T}_{\lambda(\varepsilon),\mu(\varepsilon)}$ is still an invariant hyperbolic torus for $H \circ \Phi$, with the same stable and unstable manifolds. Coming back to our original system, the torus $T_{\varepsilon}=\Phi(\mathcal{T}_{\lambda(\varepsilon),\mu(\varepsilon)})$ is hyperbolic for $H$, with stable and unstable manifolds $W^{\pm}(T_{\varepsilon})=\Phi(W^{\pm}(\mathcal{T}_{\lambda(\varepsilon),\mu(\varepsilon)}))$, and for $\gamma_{\varepsilon}=\Phi(\gamma_{\lambda(\varepsilon),\mu(\varepsilon)})$ and $p_{\varepsilon}=\Phi(p_{\lambda(\varepsilon),\mu(\varepsilon)})$ we can define a splitting matrix $M(\mathcal{T}_{\varepsilon},p_{\varepsilon})$ and splitting angles $a_i(\mathcal{T}_{\varepsilon},p_{\varepsilon})$ for $1\leq i \leq n$.  

\begin{theorem}\label{thmsplit}
Let $H$ be as in~\eqref{Hnonlin}, with $r=2\sqrt{\varepsilon}$ satisfying~\eqref{thr2}. Assume that (A.1) is satisfied, and that (A.2), (A.3) and (A.4) are satisfied for the Hamiltonian $\mathcal{H}_{\lambda(\varepsilon),\mu(\varepsilon)}$. Then, with the previous notations, we have the estimates
\[ |a_i(\mathcal{T}_{\varepsilon},p_{\varepsilon})| \MP \sqrt{\varepsilon}\left(1+(\Delta_{\omega}^*(\cdot\sqrt{\varepsilon}^{-1}))^{-1}\right) \exp\left(-\cdot \Delta_{\omega}^*(\cdot\sqrt{\varepsilon}^{-1})^{\frac{1}{\alpha}}\right), \quad 1 \leq i \leq d.\]
\end{theorem}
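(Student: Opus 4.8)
The plan is to reduce Theorem~\ref{thmsplit} to Theorem~\ref{thmsplit0} by carefully unwinding the scaling maps and the normal form coordinates. First I would observe that, by the construction preceding~\eqref{Hres}, the original Hamiltonian $H$ in~\eqref{Hnonlin} with $r=2\sqrt{\varepsilon}$ is conjugated, via the normal form transformation $\Phi$ of Theorem~\ref{thmnonlin} followed by the rescaling $\sigma:(\theta,I)\mapsto(\theta,\sqrt{\varepsilon}I)$ and multiplication by $\varepsilon^{-1}$, to the Hamiltonian $\mathcal{H}_{\lambda(\varepsilon),\mu(\varepsilon)}$ of the form~\eqref{Hres}, with $\lambda=\lambda(\varepsilon)$ and $\mu=\mu(\varepsilon)$ as in~\eqref{mu}. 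Since $\varepsilon^{-1}(H\circ\Phi\circ\sigma)$ and $H\circ\Phi$ generate the same orbits up to the time rescaling by $\sqrt{\varepsilon}$, and since invariant and asymptotic manifolds are time-reparametrization invariant, the hyperbolic torus $\mathcal{T}_{\lambda(\varepsilon),\mu(\varepsilon)}$ provided by (A.3) together with its Lagrangian stable/unstable manifolds pulls back through $\Phi\circ\sigma$ to a hyperbolic torus $T_\varepsilon=\Phi(\sigma(\mathcal{T}_{\lambda(\varepsilon),\mu(\varepsilon)}))$ for $H$ with Lagrangian manifolds $W^\pm(T_\varepsilon)$.

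Next I would track how the splitting matrix transforms under these changes of variables. Theorem~\ref{thmsplit0} already gives $|a_i(\mathcal{T}_{\lambda,\mu},p_{\lambda,\mu})|\MP\mu$ for $1\leq i\leq d$, i.e.\ the eigenvalues corresponding to the $d$ ``integrable'' directions $I_1$ are bounded by $\mu(\varepsilon)=\exp(-\cdot\lambda(\varepsilon)^{-1/\alpha})=\exp\left(-\cdot\Delta_\omega^*(\cdot\sqrt{\varepsilon}^{-1})^{1/\alpha}\right)$. The splitting matrix is, by definition, $\partial_\theta^2(S^+-S^-)$ evaluated at the homoclinic point, where $S^\pm$ are generating functions for the Lagrangian graphs. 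Under the linear rescaling $\sigma$ the angles are unchanged while the actions are multiplied by $\sqrt{\varepsilon}$, so a generating function $S$ for a Lagrangian graph $\{I=\partial_\theta S(\theta)\}$ transforms to $\sqrt{\varepsilon}\,S$ (the graph $\{I=\partial_\theta(\sqrt{\varepsilon}S)(\theta)\}$ in the original actions); consequently the splitting matrix picks up a factor $\sqrt{\varepsilon}$, and composing further with the near-identity symplectomorphism $\Phi$ contributes at most a bounded multiplicative distortion of the eigenvalues plus lower-order corrections coming from $|\Phi-\mathrm{Id}|$. The term $(1+(\Delta_\omega^*(\cdot\sqrt{\varepsilon}^{-1}))^{-1})$ in the statement is exactly this: the leading $1$ from the rescaling, and the $(\Delta_\omega^*(\cdot\sqrt{\varepsilon}^{-1}))^{-1}=\lambda(\varepsilon)$ correction from the distortion introduced by $\Phi$, whose distance to the identity is controlled by~\eqref{estnonlindist} precisely by $\lambda(\varepsilon)$ (up to the $r$-dependent factors, which here are harmless since $r=2\sqrt{\varepsilon}$ is fixed in terms of $\varepsilon$). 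Multiplying the bound $\mu(\varepsilon)$ from Theorem~\ref{thmsplit0} by $\sqrt{\varepsilon}(1+\lambda(\varepsilon))$ then yields the claimed estimate.

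I would therefore structure the proof as: (i) recall the reduction to $\mathcal{H}_{\lambda(\varepsilon),\mu(\varepsilon)}$ and apply Theorem~\ref{thmsplit0}; (ii) state a lemma describing how splitting matrices transform under $\sigma$ (multiplication of the eigenvalues by $\sqrt{\varepsilon}$) and under a symplectic map close to the identity (bounded distortion plus a relative error of order $|\Phi-\mathrm{Id}|$, controlled by~\eqref{estnonlindist}); (iii) combine the two to conclude. The main obstacle I anticipate is step (ii): one has to verify that the eigenvalue estimate is genuinely preserved (not just the matrix norm) under the non-linear map $\Phi$, which requires showing that the pullback of a Lagrangian graph $\mu$-close to a fixed reference remains a graph and that its generating function's Hessian differs from the transformed Hessian by a controlled amount — this uses the $C^1$-closeness in (A.3), the graph/exactness property, and Cauchy-type estimates on $\Phi$ (available since $\Phi\in G^\alpha$). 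Care is also needed because only the $d$ directions along $F$ are controlled; one must check that the block structure (the $I_1$ directions being the relevant ones) is compatible with the distortion by $\Phi$, i.e.\ that the $d$ smallest splitting angles remain exponentially small even after a bounded linear perturbation of the matrix — this follows from a standard eigenvalue perturbation argument (Weyl's inequality) since the perturbation itself is of size $\MP\sqrt{\varepsilon}\lambda(\varepsilon)\mu(\varepsilon)$, still dominated by the stated bound.
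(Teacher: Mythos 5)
Your overall route coincides with the paper's: apply Theorem~\ref{thmsplit0} to the rescaled normal form $\mathcal{H}_{\lambda(\varepsilon),\mu(\varepsilon)}$, then undo the scaling $\sigma$ (a factor $\sqrt{\varepsilon}$) and the near-identity map $\Phi$ (a factor $1+\lambda$), and combine. This is exactly what the paper does via Lemma~\ref{lemma4}, which writes $M_\varepsilon=\sqrt{\varepsilon}\,B\,M_{\lambda(\varepsilon),\mu(\varepsilon)}\,C$ with $|B|,|C^{-1}|\MP 1+\lambda$, and Lemma~\ref{lemma5}, which propagates the eigenvalue estimate through the relation $A'=BAC$ for \emph{symmetric} matrices. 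You correctly identify the source of the $\sqrt{\varepsilon}$ factor and of the $(1+\lambda)$ factor, and you correctly flag the key difficulty: one must show the \emph{eigenvalue} smallness, not merely a norm bound, survives the conjugacy.

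However, your last step does not close this gap, and as written it would fail. You invoke Weyl's inequality and assert the additive perturbation is of size $\MP\sqrt{\varepsilon}\,\lambda\mu$. But if you decompose $M_\varepsilon=\sqrt{\varepsilon}\,BMC=\sqrt{\varepsilon}M+\sqrt{\varepsilon}(\tilde B M+M\tilde C+\tilde B M\tilde C)$ with $\tilde B=B-\mathrm{Id}$, $\tilde C=C-\mathrm{Id}$ of size $\MP\lambda$, the remainder has operator norm $\MP\sqrt{\varepsilon}\,\lambda\,|M_{\lambda,\mu}|$, and $|M_{\lambda,\mu}|\EP 1$ (only $d$ of its eigenvalues are $\MP\mu$; the remaining $m$ eigenvalues, coming from $M_\lambda^\perp$, are of order one). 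Thus the additive perturbation is $\MP\sqrt{\varepsilon}\,\lambda$, which, since $\mu=\exp(-\cdot\lambda^{-1/\alpha})\ll\lambda$, completely dominates the target $\sqrt{\varepsilon}\,\mu$. Weyl's inequality therefore only gives $|a_i(\mathcal{T}_\varepsilon,p_\varepsilon)|\MP\sqrt{\varepsilon}\,\lambda$ and the exponential smallness is lost. The correct statement is the congruence-type Lemma~\ref{lemma5}: for symmetric $A,A'$ with $A'=BAC$, if $A$ has $d$ eigenvalues in $(-\delta,\delta)$ then $A'$ has $d$ eigenvalues in $(-\delta|B||C^{-1}|,\delta|B||C^{-1}|)$ (a min-max/Courant--Fischer subspace argument, not an additive one). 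This is the precise version of your phrase ``bounded multiplicative distortion of the eigenvalues''; you should use it instead of Weyl, and it also disposes of your concern about block compatibility, since the lemma does not care which $d$ directions carry the small eigenvalues.
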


\begin{corollary}\label{thmsplitD}
Let $H$ be as in~\eqref{Hnonlin}, with $\omega \in \Omega_d(\gamma,\tau)$ and $r=2\sqrt{\varepsilon}$ satisfying~\eqref{thr3}. Assume that (A.1) is satisfied, and that (A.2), (A.3) and (A.4) are satisfied for the Hamiltonian $\mathcal{H}_{\lambda(\varepsilon),\mu(\varepsilon)}$. Then, with the previous notations, we have the estimates
\[ |a_i(\mathcal{T}_{\varepsilon},p_{\varepsilon})| \MP \sqrt{\varepsilon}\left(1+(\cdot\gamma^{-2}\varepsilon)^{\frac{1}{2(1+\tau)}}\right) \exp\left(-\cdot (\cdot\gamma^{2}\varepsilon^{-1})^{\frac{1}{2\alpha(1+\tau)}}\right), \quad 1 \leq i \leq d.\]
\end{corollary}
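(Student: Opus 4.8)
Corollary~\ref{thmsplitD} is a purely formal consequence of Theorem~\ref{thmsplit}: for $\omega\in\Omega_d(\gamma,\tau)$ one has $\Psi_\omega(Q)\leq\gamma^{-1}Q^{\tau}$, hence $\Delta_\omega(Q)=Q\Psi_\omega(Q)\leq\gamma^{-1}Q^{1+\tau}$ and $\Delta_\omega^*(x)\geq(\gamma x)^{1/(1+\tau)}$; substituting $x\EP\sqrt\varepsilon^{-1}$ converts the two occurrences of $\Delta_\omega^*(\cdot\sqrt\varepsilon^{-1})$ in Theorem~\ref{thmsplit} into the powers of $\gamma^{2}\varepsilon^{-1}$ in the statement, exactly as Corollary~\ref{thmnonlinD} was read off from Theorem~\ref{thmnonlin}. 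So the real task is Theorem~\ref{thmsplit}, and the plan is to obtain it from Theorem~\ref{thmsplit0} together with a bookkeeping of the rescaling $\sigma$ and of the conjugacy $\Phi$; Theorem~\ref{thmsplit0} will itself rest on the $\theta_1$-independence of the averaged dynamics.

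For Theorem~\ref{thmsplit0} I would argue as follows. With the decomposition \eqref{Hres}, the Hamiltonian $\mathcal{H}_\lambda=\mathcal{H}_{\lambda,0}=\mathcal{H}^{av}+\lambda R$ depends only on $(\theta_2,I)$, so $I_1$ is a first integral of its flow; hence the torus $\mathcal{T}_\lambda=\mathcal{T}_\lambda(0)$ and its invariant manifolds lie in $\{I_1=0\}$, their generating functions $S^{\pm}_\lambda$ do not depend on $\theta_1$, and $\partial_\theta^2(S^+_\lambda-S^-_\lambda)$ has vanishing $\theta_1$-rows and $\theta_1$-columns, i.e. at least $d$ of the splitting angles of $\mathcal{T}_\lambda$ vanish. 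Then I would invoke (A.3) and (A.4): the latter provides a homoclinic point $p_{\lambda,\mu}$, and by the former $W^{\pm}(\mathcal{T}_{\lambda,\mu})$ being $\mu$-close to $W^{\pm}(\mathcal{T}_\lambda)$ in the $C^1$ topology, the first derivatives of $S^{\pm}_{\lambda,\mu}$ are $\mu$-close in $C^1$ to those of $S^{\pm}_\lambda$, so that $M(\mathcal{T}_{\lambda,\mu},p_{\lambda,\mu})=\partial_\theta^2(S^+_{\lambda,\mu}-S^-_{\lambda,\mu})(\theta_{\lambda,\mu})$ differs from a symmetric matrix with degenerate $\theta_1$-block by a matrix whose norm is $\MP\mu$. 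A short min-max argument — a symmetric matrix mapping a fixed $d$-dimensional subspace into a ball of radius $\MP\mu$ has at least $d$ eigenvalues of modulus $\MP\mu$ — then yields $|a_i(\mathcal{T}_{\lambda,\mu},p_{\lambda,\mu})|\MP\mu$ for $1\leq i\leq d$. I would stress that it is essential here to carry the $\theta_1$-independent remainder $\lambda R$, of size only of order $\lambda\geq\sqrt\varepsilon$ and not $\sqrt\varepsilon$, inside $\mathcal{H}_\lambda$ rather than in $\mu F$: this is the content of Remark~\ref{rem}, and otherwise the torus $\mathcal{T}_\lambda$ itself would not be seen to persist.

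To pass from Theorem~\ref{thmsplit0} to Theorem~\ref{thmsplit} I would first note that, with $r=2\sqrt\varepsilon$ in Theorem~\ref{thmnonlin} and after the rescaling $\sigma$ and the Taylor expansions of $h$ and $[f]_\omega$ carried out in the preparation above, $\mathcal{H}_{\lambda(\varepsilon),\mu(\varepsilon)}=\varepsilon^{-1}(H\circ\Phi\circ\sigma)$ has exactly the form \eqref{Hres} with $\lambda,\mu$ from \eqref{mu}, so under (A.1)--(A.4) Theorem~\ref{thmsplit0} gives the bound $\MP\mu$ for $d$ of the splitting angles of $\mathcal{H}_{\lambda(\varepsilon),\mu(\varepsilon)}$. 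It then remains to transport the splitting matrix through $\sigma$ and $\Phi$, which together conjugate, up to a time change, the flow of $\mathcal{H}_{\lambda(\varepsilon),\mu(\varepsilon)}$ to that of $H$. The map $\sigma$ sends a Lagrangian graph $\{I=\partial_\theta S(\theta)\}$ to $\{I=\partial_\theta(\sqrt\varepsilon S)(\theta)\}$, hence multiplies every generating function, and thus the splitting matrix, by $\sqrt\varepsilon$. For $\Phi$, evaluating at a homoclinic point (where the two manifolds meet, so that only the first derivatives of $\Phi$ enter the transformed Hessian) the splitting matrix transforms by a congruence $M\mapsto Q^{\top}MQ$ up to terms quadratic in $\|M\|$, with $Q$ built from $\partial_\theta\Phi_\theta$, $\partial_I\Phi_\theta$ and $\partial_\theta^2 S^-$; since \eqref{estnonlindist} with $r=2\sqrt\varepsilon$ gives $\Delta_\omega^*(cr^{-1})^{-1}\EP\lambda$, together with $|\Phi_\theta-\mathrm{Id}_\theta|\MP\lambda$ and $|\Phi_I-\mathrm{Id}_I|\MP\sqrt\varepsilon\lambda$ on $\mathcal{D}_{r/2}$, one gets $|Q-\mathrm{Id}|\MP\lambda$ and the congruence distorts the splitting angles by a factor $1+O(\lambda)$; by \eqref{mu} this is the factor $1+(\Delta_\omega^*(\cdot\sqrt\varepsilon^{-1}))^{-1}$ of the statement. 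Combining the two effects yields the estimate of Theorem~\ref{thmsplit}, and then Corollary~\ref{thmsplitD} as above.

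The hard part will be that very last step, transporting the bound through $\Phi$. Since $\Phi$ is an averaging transformation whose generators depend genuinely on $\theta_1$, it does not preserve the foliation $\{I_1=\mathrm{const}\}$, and one must make sure that the congruence by $Q$ does not let the $O(1)$ splitting angles carried by the $\theta_2$-block contaminate the $d$ small ones. The way around this is to use that $\Phi$ is a \emph{resonant} normal form conjugacy — equivalently that $l_\omega=\omega\cdot I$ stays a first integral of $h+[f]_\omega+g$ and is conserved by $H\circ\Phi$ up to the exponentially small $\tilde f$ — combined with the transformation rules for splitting matrices under near-identity symplectic maps worked out in \cite{LMS03}; this is also what produces the honest factor $1+(\Delta_\omega^*(\cdot\sqrt\varepsilon^{-1}))^{-1}$, rather than a bare constant, in Theorem~\ref{thmsplit}, in line with the non-optimality of the estimate on $\Phi$ noted after Theorem~\ref{thmlin}.
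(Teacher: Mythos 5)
Your overall route matches the paper's exactly: Corollary~\ref{thmsplitD} is read off from Theorem~\ref{thmsplit} by the substitution $\Delta_\omega^*(x)\geq(\gamma x)^{1/(1+\tau)}$, just as Corollary~\ref{thmnonlinD} comes from Theorem~\ref{thmnonlin}; Theorem~\ref{thmsplit0} is obtained from the block structure of $M_\lambda$ (your ``vanishing $\theta_1$-rows and columns'' is Lemma~\ref{lemma1}), the $\mu$-closeness of $M_{\lambda,\mu}$ to $M_\lambda$ (Lemma~\ref{lemma2}), and symmetric eigenvalue perturbation (your min-max argument is Lemma~\ref{lemma3}); and the passage to Theorem~\ref{thmsplit} uses the factor $\sqrt\varepsilon$ from the rescaling $\sigma$ and a transport of the splitting matrix through $\Phi$ with bounds coming from~\eqref{estnonlindist}.

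One remark on your last paragraph, which misdiagnoses where the difficulty lies. You worry that conjugation by $D\Phi$ could let the $O(1)$ eigenvalues of the $\theta_2$-block ``contaminate'' the $d$ small ones, and you propose to fix this by appealing to the resonant structure of $\Phi$. This is not needed: the paper closes the argument with a purely linear-algebraic fact (Lemma~\ref{lemma5}). If a symmetric matrix $A$ has a $d$-dimensional subspace $V$ on which $|Av|\leq\delta|v|$, and $A'=BAC$ is symmetric, then on the $d$-dimensional subspace $C^{-1}V$ one has $|A'w|\leq|B|\,\delta\,|Cw|$, so by the intersection/min-max argument $A'$ has $d$ eigenvalues of modulus $\MP\delta$ once $B$ and $C$ are $O(1)$. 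The naive estimate $Q^\top MQ=M+O(\lambda\|M\|)$, which is what suggests the contamination, is too crude; using the invariant near-kernel subspace is what saves the day, and it requires nothing about $\Phi$ being a resonant conjugacy beyond what already produced the block structure in Theorem~\ref{thmsplit0}. Relatedly, your phrase ``congruence up to terms quadratic in $\|M\|$'' is imprecise: the paper's Lemma~\ref{lemma4}, taken from \cite{LMS03} (Proposition~1.5.4), is an exact identity $M_\varepsilon=\sqrt\varepsilon\,BM_{\lambda,\mu}C$ with no remainder, with $B,C$ assembled from $D\Phi$ and $\partial_\theta^2 S^{\pm}$ at the homoclinic point; their norms, not error terms, carry the factor $1+(\Delta_\omega^*(\cdot\sqrt\varepsilon^{-1}))^{-1}$.
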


Let us note that the exponent in the exponential factor in our corollary is not far from being optimal, at least for $\alpha>1$, $d=n-1$ and for a badly approximable vector $\omega \in \Omega_{n-1}(\gamma,n-2)$: indeed, in this case we have an exponentially small upper bound with the exponent $(2\alpha (n-1))^{-1}$, whereas in \cite{Mar05}, a sequence of $\varepsilon_j$-perturbations, with $\varepsilon_j$ going to zero as $j$ goes to infinity, is constructed such that the perturbed system has an invariant hyperbolic torus of dimension $d=n-1$, with $d-1=n-2$ splitting angles which have an exponential small lower bound with the exponent $(2(\alpha-1)(n-2))^{-1}$.
 
Let us now briefly discuss the validity of our assumptions (A.1), (A.2), (A.3) and (A.4), referring to \cite{LMS03} for more details. Concerning (A.1), in principle it is just a simplifying assumption and it can be removed, though we shall not try to do it here. The assumption (A.2) is crucial as it ensures that the averaged system has an invariant hyperbolic tori. Then, using classical KAM theory, (A.3) follows from (A.2) under usual assumptions (there are many references in the analytic case, but we do not know any for non-analytic but sufficiently regular systems). Now concerning  the existence of homoclinic orbits (A.4), this is a general assumption as follows: using a variational argument one can prove that the mechanical system has orbits homoclinic to the hyperbolic fixed point and so the averaged system has orbits homoclinic to the hyperbolic torus $\mathcal{T}(0)$, then assuming that the stable and unstable manifolds of $\mathcal{T}(0)$ intersect transversely along one of this homoclinic orbit inside the energy level, the assumption (A.4) is satisfied, that is this homoclinic orbit for the averaged system can be continued to a homoclinic orbit for the full system.

Note that the results in Theorem~\ref{thmsplit0} and Theorem~\ref{thmsplit} are just upper bounds on $d$ splitting angles. These splitting angles can be actually equal to zero, as we did not assume that the stable and unstable manifolds intersect transversely inside the energy level. As we already mentioned, this transversality assumption implies (A.4) provided (A.1), (A.2) and (A.3) are satisfied, and moreover, under this assumption, one can prove that they are exactly $d$ splitting angles which are non-zero and exponentially small as the other angles can only be polynomially small (we recall that at least one of them is zero, but we could have avoided this situation by taking a Poincaré section and studied the associated discrete system).

\paraga Let us now give some details concerning the proof of Theorem~\ref{thmsplit0} and Theorem~\ref{thmsplit}, which follows from simple lemmas (the proof of which, or references, can be found in \cite{LMS03}). 

Consider first the Hamiltonian $H_\lambda$, with its hyperbolic invariant torus $\mathcal{T}_\lambda=\mathcal{T}_{\lambda,0}=\mathcal{T}_\lambda(0)$ with stable and unstable manifolds
\[ W^{\pm}(\mathcal{T}_\lambda)=\{I_1=0\} \times W^{\pm}(O_\lambda(0)). \] 
Let us denote by $S^{\pm}_{\lambda}=S^{\pm}_{\lambda,0}$ the corresponding generating functions, and by $p_\lambda=p_{\lambda,0}$ the homoclinic point. From the expression of $W^{\pm}(\mathcal{T}_\lambda)$, one immediately has $\partial_{\theta_1}S^{\pm}_\lambda=0$. Let us write $M_\lambda=M(\mathcal{T}_\lambda,p_\lambda)$. The following lemma is then obvious.

\begin{lemma}\label{lemma1}
The matrix $M_\lambda$ admits the following block decomposition:
\[ M_\lambda=\left( \begin{array}{cc}
0 & 0  \\
0 & M_{\lambda}^{\perp}  \\
\end{array} \right)\]
where $M_{\lambda}^{\perp}=\partial^2_{\theta_2}(S^+_\lambda-S^-_\lambda)(\theta_\lambda)$ is a square symmetric matrix of size $m$. 
\end{lemma}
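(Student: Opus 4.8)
The plan is to read off the block structure directly from the definition of the splitting matrix together with the structural fact that the stable and unstable manifolds of $\mathcal{T}_\lambda$ split as a product over the $I_1$-direction. First I would recall that, by construction of $\mathcal{H}_\lambda$ (which is independent of $\theta_1$), one has $W^{\pm}(\mathcal{T}_\lambda)=\{I_1=0\}\times W^{\pm}(O_\lambda(0))$, and that the generating functions $S^{\pm}_\lambda:U^{\pm}\to\R$ satisfy $I_*=\partial_\theta S^{\pm}_\lambda(\theta_*)$ on the manifold. Writing $\theta=(\theta_1,\theta_2)$ and $I=(I_1,I_2)$ and using that on $W^{\pm}(\mathcal{T}_\lambda)$ the first action component vanishes identically, we get $\partial_{\theta_1}S^{\pm}_\lambda\equiv 0$ on $U^{\pm}$; equivalently $S^{\pm}_\lambda$ depends on $\theta_2$ only. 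Moreover $W^{\pm}(O_\lambda(0))$ is a Lagrangian graph in the $(\theta_2,I_2)$-variables, so $\partial_{\theta_2}S^{\pm}_\lambda(\theta_2)$ is precisely a generating function for it.

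Next I would differentiate once more. Since $S^{+}_\lambda-S^{-}_\lambda$ is a function of $\theta_2$ alone, every second partial derivative involving at least one $\theta_1$ vanishes: $\partial^2_{\theta_1}(S^+_\lambda-S^-_\lambda)=0$ and $\partial_{\theta_1}\partial_{\theta_2}(S^+_\lambda-S^-_\lambda)=0$. Hence the Hessian $\partial^2_\theta(S^+_\lambda-S^-_\lambda)$, evaluated at the homoclinic point $\theta_\lambda$, has the claimed block form with only the lower-right $m\times m$ block $M_\lambda^{\perp}=\partial^2_{\theta_2}(S^+_\lambda-S^-_\lambda)(\theta_\lambda)$ possibly nonzero. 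Symmetry of $M_\lambda^{\perp}$ is immediate since it is a Hessian of a $C^2$ function (recall the $S^{\pm}_{\lambda,\mu}$ are $C^2$ by the assumption in (A.3), specialized to $\mu=0$). This yields the block decomposition of $M_\lambda=M(\mathcal{T}_\lambda,p_\lambda)$ exactly as stated.

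There is essentially no obstacle here: the lemma is bookkeeping about which variables the generating functions depend on, and the only thing to be careful about is to phrase it so that one genuinely uses the product structure $W^{\pm}(\mathcal{T}_\lambda)=\{I_1=0\}\times W^{\pm}(O_\lambda(0))$ established just above (which itself rests on the $\theta_1$-independence of $\mathcal{H}_\lambda$, i.e. on the resonant normal form of Theorem~\ref{thmnonlin} and the fact that $R$ does not depend on $\theta_1$). The only mild point worth spelling out is that the homoclinic point $p_\lambda$ lies in $V^+\cap V^-$ so that both $S^+_\lambda$ and $S^-_\lambda$ are defined and $C^2$ near $\theta_\lambda$, which is exactly what (A.3) and (A.4) guarantee. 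Once that is said, the computation of the Hessian is a one-line observation, and the ``obvious'' in the statement is justified.
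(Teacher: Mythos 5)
Your proof is correct and follows exactly the paper's approach: the paper notes right before the lemma that the product structure $W^{\pm}(\mathcal{T}_\lambda)=\{I_1=0\}\times W^{\pm}(O_\lambda(0))$ forces $\partial_{\theta_1}S^{\pm}_\lambda\equiv 0$, from which the block form of the Hessian is immediate, and you simply spell out what the paper calls ``obvious.'' Your remark that one only needs $\partial_{\theta_1}S^{\pm}_\lambda\equiv 0$ in a neighbourhood of $\theta_\lambda$ (so that the mixed second partials vanish at $\theta_\lambda$) together with the $C^2$ regularity from (A.3) at $\mu=0$ is exactly the right level of care.
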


Note that the matrix $M_\lambda^\perp$ always have zero as an eigenvalue (because we are in a continuous setting), and that the fact that $W^{\pm}(\mathcal{T}_\lambda)$ intersect transversely along the homoclinic orbit inside the energy level is equivalent to zero being a simple eigenvalue of $M_\lambda^\perp$. 

Now let us come back to the Hamiltonian $H_{\lambda,\mu}$, and let us write $M_{\lambda,\mu}=M(\mathcal{T}_{\lambda,\mu},p_{\lambda,\mu})$. From assumption (A.3) the second derivatives of the functions $S^{\pm}_{\lambda,\mu}$ are $\mu$-close to the second derivatives of $S^{\pm}_{\lambda}$ for the $C^0$-topology. This immediately implies the following lemma, where we also denote by $|\,.\,|$ the norm induced by the supremum norm on the space of matrices.

\begin{lemma}\label{lemma2}
We have the estimate
\[ |M_{\lambda,\mu}-M_\lambda| \MP \mu. \]
\end{lemma}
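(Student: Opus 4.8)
The plan is to reduce the estimate to the $C^{0}$-closeness of the Hessians of the generating functions recorded just before the statement. Set $\mathcal{S}_{\lambda,\mu}=S^{+}_{\lambda,\mu}-S^{-}_{\lambda,\mu}$ and $\mathcal{S}_{\lambda}=S^{+}_{\lambda}-S^{-}_{\lambda}$, both $C^{2}$ on $U^{+}\cap U^{-}$, so that $M_{\lambda,\mu}=\partial^{2}_{\theta}\mathcal{S}_{\lambda,\mu}(\theta_{\lambda,\mu})$ and $M_{\lambda}=\partial^{2}_{\theta}\mathcal{S}_{\lambda}(\theta_{\lambda})$. First I would write
\[ M_{\lambda,\mu}-M_{\lambda}=\big(\partial^{2}_{\theta}\mathcal{S}_{\lambda,\mu}(\theta_{\lambda,\mu})-\partial^{2}_{\theta}\mathcal{S}_{\lambda}(\theta_{\lambda,\mu})\big)+\big(\partial^{2}_{\theta}\mathcal{S}_{\lambda}(\theta_{\lambda,\mu})-\partial^{2}_{\theta}\mathcal{S}_{\lambda}(\theta_{\lambda})\big) \]
and bound the two brackets separately.

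The first bracket is at most $\sup_{U^{+}\cap U^{-}}|\partial^{2}_{\theta}\mathcal{S}_{\lambda,\mu}-\partial^{2}_{\theta}\mathcal{S}_{\lambda}|$. By (A.3) the manifolds $W^{\pm}(\mathcal{T}_{\lambda,\mu})$ are $\mu$-close to $W^{\pm}(\mathcal{T}_{\lambda})$ in the $C^{1}$-topology; since these are the graphs of $\partial_{\theta}S^{\pm}_{\lambda,\mu}$ and $\partial_{\theta}S^{\pm}_{\lambda}$ over $U^{\pm}$, this says precisely that $\partial_{\theta}S^{\pm}_{\lambda,\mu}$ is $C^{1}$-close to $\partial_{\theta}S^{\pm}_{\lambda}$, hence $|\partial^{2}_{\theta}S^{\pm}_{\lambda,\mu}-\partial^{2}_{\theta}S^{\pm}_{\lambda}|_{C^{0}}\MP\mu$ and so the first bracket is $\MP\mu$. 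This is the ``immediate'' part alluded to in the text.

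The only remaining point — which I expect to be the one delicate step — is the second bracket, i.e. the displacement of the homoclinic point. Following the homoclinic orbit by continuation from $\mu=0$ keeps $p_{\lambda,\mu}$ in a fixed compact subset of $V^{+}\cap V^{-}$; if one fixes the homoclinic point on this orbit so that its angle coordinate remains $\theta_{\lambda}$, then $\theta_{\lambda,\mu}=\theta_{\lambda}$, the second bracket vanishes, and the lemma follows from the first estimate alone. Otherwise one has $|\theta_{\lambda,\mu}-\theta_{\lambda}|\MP\mu$ and concludes using the Lipschitz regularity of $\partial^{2}_{\theta}\mathcal{S}_{\lambda}$ near $\theta_{\lambda}$ — which is available because the invariant manifolds of a Gevrey Hamiltonian are themselves Gevrey, not merely $C^{1}$ — to get $|\partial^{2}_{\theta}\mathcal{S}_{\lambda}(\theta_{\lambda,\mu})-\partial^{2}_{\theta}\mathcal{S}_{\lambda}(\theta_{\lambda})|\MP\mu$; adding the two contributions yields $|M_{\lambda,\mu}-M_{\lambda}|\MP\mu$. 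The triangle inequality and the passage from $C^{1}$-closeness of the Lagrangian graphs to $C^{0}$-closeness of the Hessians of their generating functions are routine, which is why the statement is essentially immediate from (A.3).
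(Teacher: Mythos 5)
Your first bracket is exactly the paper's own one-line argument: the text immediately preceding the lemma observes, from (A.3) and the Lagrangian-graph description, that $\partial^{2}_{\theta}S^{\pm}_{\lambda,\mu}$ is $C^{0}$-$\mu$-close to $\partial^{2}_{\theta}S^{\pm}_{\lambda}$ on $U^{\pm}$, and declares the lemma ``immediate'' from this. You go beyond the paper by isolating the second bracket, the displacement $\theta_{\lambda,\mu}\ne\theta_{\lambda}$ of the base point, which the paper passes over in silence.

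Your treatment of that second bracket is, however, the weak spot. The first option (shift along $\gamma_{\lambda,\mu}$ so that some point of it projects to $\theta_{\lambda}$) does not work once $n\ge 2$: the projected orbit $t\mapsto(\gamma_{\lambda,\mu}(t))_{\theta}$ is a one-dimensional curve in $\T^{n}$ which has no reason to pass through the prescribed point $\theta_{\lambda}$. The second option presupposes $|\theta_{\lambda,\mu}-\theta_{\lambda}|\MP\mu$, which is not among the hypotheses and would in general require transversality of the homoclinic intersection, something (A.3)--(A.4) do not provide; the Gevrey regularity of the manifolds gives Lipschitz control of the Hessian but not of the base point itself.

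The clean way to close the gap, and in my view what makes the paper's ``immediately'' legitimate, is to notice that for the purpose the lemma serves (feeding Lemma~\ref{lemma1} and Lemma~\ref{lemma3} into Theorem~\ref{thmsplit0}) one only needs $M_{\lambda,\mu}$ to be $\mu$-close to \emph{some} symmetric matrix whose upper-left $d\times d$ block vanishes. Since $\partial_{\theta_{1}}S^{\pm}_{\lambda}\equiv 0$ on all of $U^{\pm}$, the Hessian $\partial^{2}_{\theta}\bigl(S^{+}_{\lambda}-S^{-}_{\lambda}\bigr)(\theta)$ has this block structure at \emph{every} $\theta$, not merely at $\theta_{\lambda}$. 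Comparing $M_{\lambda,\mu}$ with $\partial^{2}_{\theta}\bigl(S^{+}_{\lambda}-S^{-}_{\lambda}\bigr)(\theta_{\lambda,\mu})$, i.e. at the same base point, your first bracket alone gives the required $\MP\mu$ bound and the second bracket never arises. If one insists on $M_{\lambda}$ being evaluated literally at $\theta_{\lambda}$, then an additional hypothesis controlling the homoclinic-point displacement is genuinely required.
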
 

Then, in order to use the previous lemma, we need to know how the eigenvalues of a symmetric matrix vary under perturbation. This is the content of the next lemma, where we denote by $d$ the distance on $\R^n$ induced by the supremum norm.

\begin{lemma}\label{lemma3}
Let $A$ and $A'$ be two symmetric matrices, with spectrum $\mathrm{Spec}(A)$ and $\mathrm{Spec}(A')$. Then we have the estimate
\[ d(\mathrm{Spec}(A),\mathrm{Spec}(A'))\leq |A-A'|. \] 
\end{lemma}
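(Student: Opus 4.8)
The plan is to reduce the statement to the classical Weyl perturbation inequality for the eigenvalues of symmetric matrices, and then to compare the two matrix norms that are involved. Here $|A-A'|$ denotes the operator norm of $A-A'$ associated to the supremum norm on $\R^n$, and I read $d(\mathrm{Spec}(A),\mathrm{Spec}(A'))$ as the supremum distance in $\R^n$ between the non-decreasing rearrangements $(\lambda_1(A)\leq\dots\leq\lambda_n(A))$ and $(\lambda_1(A')\leq\dots\leq\lambda_n(A'))$ of the eigenvalues of $A$ and $A'$ (counted with multiplicity); this is the natural and strongest reading, and it is precisely the one the argument below controls.

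First I would invoke the Courant--Fischer min-max formula: for a symmetric matrix $S$ of size $n$ and each $1\leq i\leq n$,
\[ \lambda_i(S)=\min_{\dim W=i}\ \max_{x\in W\setminus\{0\}}\frac{Sx\cdot x}{x\cdot x}, \]
the minimum running over $i$-dimensional subspaces $W\subseteq\R^n$ and $\cdot$ denoting the Euclidean scalar product. Applying this to both $A$ and $A'$ and using that $|(A-A')x\cdot x|\leq\|A-A'\|_2\,(x\cdot x)$ for every $x\in\R^n$, where $\|\,.\,\|_2$ is the spectral norm, the usual argument (fix a subspace nearly achieving the minimum for one of the two matrices and evaluate the Rayleigh quotient of the other matrix on it) yields Weyl's inequality
\[ |\lambda_i(A)-\lambda_i(A')|\leq\|A-A'\|_2,\quad 1\leq i\leq n, \]
and hence $d(\mathrm{Spec}(A),\mathrm{Spec}(A'))\leq\|A-A'\|_2$.

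It then remains to pass from the spectral norm to the supremum-induced norm $|\,.\,|$ used elsewhere in this section, that is, to check that $\|M\|_2\leq|M|$ for every symmetric matrix $M$; applied to the symmetric matrix $M=A-A'$ this completes the proof. For symmetric $M$ the spectral norm equals the spectral radius $\rho(M)$, and the spectral radius of any square matrix is dominated by any of its operator norms, in particular by the one induced by the supremum norm; therefore $\|M\|_2=\rho(M)\leq|M|$. Combining this with the previous inequality gives $d(\mathrm{Spec}(A),\mathrm{Spec}(A'))\leq|A-A'|$, as claimed.

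There is no genuine obstacle here, the result being entirely classical; the two points that deserve a word of care are the interpretation of the distance between the two spectra (a direct estimate using a single eigenvector would only bound the Hausdorff distance between the spectra, whereas the ordered-eigenvalue statement needs Weyl's inequality) and the comparison $\|M\|_2\leq|M|$ for symmetric $M$, which is exactly what lets one replace the spectral norm appearing naturally in the perturbation bound by the supremum-induced matrix norm employed in Lemmas~\ref{lemma1} and~\ref{lemma2}.
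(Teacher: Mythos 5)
Your proof is correct. Note that the paper itself does not supply a proof of this lemma: it states immediately before Lemma~\ref{lemma1} that the proofs of these auxiliary lemmas, ``or references, can be found in \cite{LMS03},'' and Lemma~\ref{lemma3} falls in the category of results for which the author simply points to that reference. Your argument is the canonical one: Courant--Fischer gives Weyl's perturbation bound $|\lambda_i(A)-\lambda_i(A')|\leq\|A-A'\|_2$ for the ordered eigenvalues, and then for the symmetric matrix $M=A-A'$ one passes to the sup-induced operator norm via $\|M\|_2=\rho(M)\leq|M|$, the last inequality holding because the spectral radius is dominated by any operator norm. Both steps are correctly justified, and the reading of $d(\mathrm{Spec}(A),\mathrm{Spec}(A'))$ as the $\ell^\infty$-distance between the ordered $n$-tuples of eigenvalues is the right one given the paper's convention (``the distance on $\R^n$ induced by the supremum norm''). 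Since the paper provides no proof of its own, there is no divergence in approach to report; your argument is the standard one that a reference such as \cite{LMS03} would also invoke.
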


Let us remark that the above lemma is quite specific to symmetric matrices, and the fact that the splitting matrix is indeed symmetric in the general case (in our restricted case, this is obvious by definition) ultimately comes from the Lagrangian character of the stable and unstable manifolds.

The proof of Theorem~\ref{thmsplit0} is now a straightforward consequence of Lemma~\ref{lemma1}, Lemma~\ref{lemma2} and Lemma~\ref{lemma2}.

Let us now come back to the Hamiltonian $H$ as in~\eqref{Hnonlin}. It is related to the Hamiltonian $\mathcal{H}_{\lambda(\varepsilon),\mu(\varepsilon)}$ by a scaling transformation and by a symplectic transformation $\Phi$. The effect of the scaling transformation is simply to multiply the splitting matrix by $\sqrt{\varepsilon}$. The effect of the symplectic transformation is more complicated to describe, but the overall effect is contained in the following lemma, where we denote $M_\varepsilon=M(\mathcal{T}_{\varepsilon},p_{\varepsilon})$.

\begin{lemma}\label{lemma4}
There exist two invertible square matrix $B$ and $C$ of size $n$ such that
\[M_\varepsilon=\sqrt{\varepsilon}B M_{\lambda(\varepsilon),\mu(\varepsilon)}C\]
where $B$ and $C$ satisfy
\[ |B|\MP 1+(\Delta_{\omega}^*(\cdot\sqrt{\varepsilon}^{-1}))^{-1} , \quad |C^{-1}|\MP 1+(\Delta_{\omega}^*(\cdot\sqrt{\varepsilon}^{-1}))^{-1}. \]
\end{lemma}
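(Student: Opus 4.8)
The plan is to track how the splitting matrix transforms under the composition of the scaling map $\sigma$ and the symplectic diffeomorphism $\Phi$. Recall that $W^{\pm}(T_\varepsilon)=\Phi(W^{\pm}(\mathcal{T}_{\lambda(\varepsilon),\mu(\varepsilon)}))$ and $p_\varepsilon=\Phi(p_{\lambda(\varepsilon),\mu(\varepsilon)})$. First I would dispose of the scaling: since $\sigma(\theta,I)=(\theta,\sqrt{\varepsilon}I)$ multiplies the action component of a Lagrangian graph $I=\partial_\theta S(\theta)$ by $\sqrt{\varepsilon}$, the generating function is multiplied by $\sqrt{\varepsilon}$, hence the second-derivative difference — that is, the splitting matrix — is multiplied by $\sqrt{\varepsilon}$. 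This accounts for the prefactor $\sqrt{\varepsilon}$, and reduces the problem to understanding the effect of $\Phi$ alone on the splitting matrix at a homoclinic point.

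Next I would analyze the symplectic map $\Phi$. The key geometric fact (borrowed from \cite{LMS03}) is that a symplectic diffeomorphism sends exact Lagrangian graphs to exact Lagrangian graphs, and there is an explicit bilinear formula: if $\Phi$ sends a graph with generating function $S$ near a point to a graph with generating function $\widetilde S$, then at a common point of two such images, the Hessian of the difference $\widetilde S^+-\widetilde S^-$ equals $B\,\big(\partial_\theta^2(S^+-S^-)\big)\,C$ for matrices $B$, $C$ built from the first-order jet (the differential) of $\Phi$ at the point $p_{\lambda(\varepsilon),\mu(\varepsilon)}$, evaluated along the diffeomorphism and its inverse, restricted to the fact that at a homoclinic point the two graphs have the same first derivative. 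Concretely, writing $\Phi$ in block form with respect to the $(\theta,I)$ splitting, $B$ and $C$ are rational expressions in the blocks of $D\Phi$ (and $C$ is, up to sign/transpose, the $\theta$-to-$\theta$ block of the relevant linearization), so that $C^{-1}$ is likewise a rational expression in those blocks; in particular $B$ and $C$ are invertible because $D\Phi$ is invertible and the graph transversality guarantees the relevant sub-block is invertible.

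The main obstacle — and the only place where real estimates enter — is controlling $|B|$ and $|C^{-1}|$. For this I would use Theorem~\ref{thmnonlin}, specifically the estimate~\eqref{estnonlindist} for $l=0$ (which bounds $\Phi-\mathrm{Id}$ in $C^0$) together with the $|l|=1$ cases (which bound $D\Phi-\mathrm{Id}$), keeping careful track of the anisotropic $r$-weights: the angle–angle and action–action blocks of $D\Phi-\mathrm{Id}$ are of order $\Delta_\omega^*(\cdot\, r^{-1})^{-1}$, the action–angle block is $O(r\,\Delta_\omega^*(\cdot\, r^{-1})^{-1})$, and the angle–action block is $O(r^{-1}\Delta_\omega^*(\cdot\, r^{-1})^{-1})$; after the rescaling by $\sigma$ with $r=2\sqrt\varepsilon$ these weights are absorbed and what survives in the formulas for $B$ and $C$ is a bound of the form $1+(\Delta_\omega^*(\cdot\,\sqrt\varepsilon^{-1}))^{-1}$. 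Then $C$ is $1+(\Delta_\omega^*(\cdot\,\sqrt\varepsilon^{-1}))^{-1}$-close to a fixed invertible matrix (coming from the linear part of the normal-form change of coordinates), so for $\varepsilon$ small $C^{-1}$ is bounded by the same quantity, and similarly for $B$. Assembling the scaling factor $\sqrt\varepsilon$, the bilinear transformation formula, and these two bounds yields $M_\varepsilon=\sqrt\varepsilon\, B\, M_{\lambda(\varepsilon),\mu(\varepsilon)}\, C$ with the claimed estimates on $|B|$ and $|C^{-1}|$.
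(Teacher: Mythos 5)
Your proposal follows essentially the same route as the paper: handle the scaling $\sigma$ (which contributes the $\sqrt{\varepsilon}$ prefactor), invoke the formula from \cite{LMS03}, Proposition 1.5.4 expressing the transformed splitting matrix as $BM_{\lambda,\mu}C$ in terms of $D\Phi$, and then control $|B|$ and $|C^{-1}|$ via the estimates \eqref{estnonlindist} from Theorem~\ref{thmnonlin} with $r=2\sqrt{\varepsilon}$. The paper's own treatment is even terser (it notes only that $\partial_\theta\Phi_\theta$ and $\partial_\theta\Phi_I$ enter the formula), but your block-by-block accounting of the anisotropic $r$-weights and how they are absorbed by the rescaling is consistent with, and fills in, that same argument.
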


The matrices $B$ and $C$ depends on the differential of the transformation $\Phi$ (we refer to \cite{LMS03}, Proposition 1.5.4 for an explicit expression), and the estimates on $B$ and $C$ follows from the estimates on $\partial_\theta \Phi_\theta$ and $\partial_\theta \Phi_I$ contained in~\eqref{estnonlindist} in Theorem~\ref{thmnonlin}.
 
Finally, we need yet another lemma from linear algebra.

\begin{lemma}\label{lemma5}
Let $A$ and $A'$ two symmetric matrices, with $A'=BAC$ for two invertible square matrix $B$ and $C$ of size $n$. Assume that $A$ has $d$ eigenvalues in the interval $(-\delta,\delta)$. Then $A'$ has $d$ eigenvalues in the interval $(-\delta|B||C^{-1}|,\delta|B||C^{-1}|)$.
\end{lemma}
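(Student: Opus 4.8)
The plan is to reduce the claim to a standard fact about how eigenvalues of a symmetric matrix are moved by conjugation-like transformations, handling the asymmetry caused by $A' = BAC$ (with $B \neq C^{-t}$ in general) by a min-max argument rather than by exact similarity.

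First I would observe that $A'$ is symmetric by hypothesis, and that the statement is really about the number of eigenvalues of $A'$ in a symmetric interval around the origin. Let $k = |B||C^{-1}|$, so that the target interval is $(-\delta k, \delta k)$. The key quantitative input is a bound on how the transformation $A \mapsto BAC$ distorts the relevant Rayleigh-type quantities. Writing $A' = BAC$, for any nonzero vector $x$ one has $\langle A' x, x\rangle = \langle A C x, B^t x\rangle$; this is not immediately controlled because the two arguments differ. A cleaner route is to use the symmetry of $A'$: since $A'$ is symmetric, $A' = \tfrac12(BAC + C^t A B^t)$, and one can try to compare the eigenvalue-counting function of $A'$ with that of $A$ via Sylvester's law of inertia. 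Indeed, the cleanest argument: since $A$ is symmetric with $d$ eigenvalues in $(-\delta,\delta)$, the matrix $A - \delta I$ has some signature and $A + \delta I$ has some signature, and the count of eigenvalues of $A$ in $(-\delta,\delta)$ equals the difference of the number of negative eigenvalues of $A-\delta I$ and of $A+\delta I$. Then I would relate the inertia of $A' \pm \delta k I$ to that of $A \pm \delta I$.

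The main step, which I expect to be the central (though still elementary) obstacle, is to show: if $A$ is symmetric and $A + \delta I$ is positive definite (respectively negative definite), then $A' + \delta k I$ is positive definite (resp. negative definite), where $k = |B||C^{-1}|$. Equivalently one wants $\|A'\| \le k \|A\|$-type control localized on spectral subspaces. I would do this by the min-max (Courant–Fischer) characterization of the $j$-th largest eigenvalue of the symmetric matrix $A'$, combined with the bound, valid for all $x$,
\[
|\langle A' x, x\rangle| \le |B|\,|C|\,\langle A (\cdot), \cdot\rangle\text{-type estimates},
\]
made precise as follows: writing $y = Cx$, we have $\|y\| \le |C|\|x\|$ and $\|x\| \le |C^{-1}|\|y\|$, and one estimates the symmetrized form $\langle A' x,x\rangle = \langle A y, B^t x\rangle$ after a further substitution. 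The technically cleanest version is: on the subspace $S$ where $A \ge \mu I$ (or $\le \mu I$), the image $C S$ (or $B^t S$) has the same dimension, and on it $A'$ satisfies the corresponding bound with $\mu$ replaced by $\mu/(|B||C^{-1}|)$ up to signs; feeding $\mu = \pm\delta$ into Courant–Fischer then yields that $A'$ has at least $d$ eigenvalues with absolute value $< \delta|B||C^{-1}|$, and an identical argument with the roles of $A$ and $A'$ interchanged (using $A = B^{-1} A' C^{-1}$) gives the reverse inequality on the count, hence exactly $d$. I would present this as an application of Lemma~\ref{lemma3}'s circle of ideas (Weyl-type inequalities) adapted to the one-sided factors $B$, $C$, citing the explicit computation from \cite{LMS03} for the bookkeeping with the block structure inherited from Lemma~\ref{lemma1}, since in our situation $A = M_{\lambda,\mu}$ already has the $d$ eigenvalues near zero coming from the $d\times d$ zero block perturbed by $O(\mu)$.
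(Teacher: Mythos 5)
Your ``main step'' --- that $A+\delta I$ positive definite implies $A'+\delta|B||C^{-1}|\,I$ positive definite --- is false, and in fact Lemma~\ref{lemma5} as literally written is also false. Already in dimension $n=1$ (where every matrix is symmetric, so the hypothesis on $A'$ is vacuous) take $A=1/2$, $B=1$, $C=10$, $\delta=1$: then $A'=BAC=5$, $|B||C^{-1}|=1/10$, and $5\notin(-1/10,1/10)$. A diagonal $2\times 2$ version works just as well. The paper gives no proof of this lemma, only a pointer to \cite{LMS03}; the statement as printed is evidently a misprint for $|B||C|$, or else relies tacitly on a relation between $B$ and $C$ inherited from the symplectic construction behind Lemma~\ref{lemma4} that is not recorded in the hypotheses. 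Your Sylvester/inertia reduction cannot close because $A'\pm\delta k I$ is not a congruence transform of $A\pm\delta I$ (one computes $A'+\delta k I=B(A+\delta k B^{-1}C^{-1})C$, which is a congruence only when $B=C^{t}$), and your Rayleigh-quotient step founders on the very asymmetry $\langle A'x,x\rangle=\langle ACx,B^{t}x\rangle$ that you flag as the obstacle but never actually overcome.

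What is true, and elementary, is the version with $k=|B||C|$, and its proof uses singular values rather than inertia. Let $V$ be the $d$-dimensional span of the eigenvectors of $A$ whose eigenvalues lie in $(-\delta,\delta)$, so that $\|A\restriction_{V}\|<\delta$. Set $W=C^{-1}V$, still of dimension $d$. For any nonzero $w\in W$ one has $Cw\in V$, hence
\[
\|A'w\|=\|BACw\|\le|B|\,\|ACw\|<|B|\,\delta\,\|Cw\|\le|B||C|\,\delta\,\|w\|.
\]
By the Courant--Fischer characterization of singular values applied on the $d$-dimensional subspace $W$, $\sigma_{n-d+1}(A')<|B||C|\delta$, and since $A'$ is symmetric its singular values are the absolute values of its eigenvalues, so at least $d$ eigenvalues of $A'$ lie in $(-|B||C|\delta,|B||C|\delta)$. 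This is the min--max idea you invoke, but applied to $\|A'w\|$ rather than to the two-argument quantity $\langle A'w,w\rangle$, which is precisely what bypasses the asymmetry. If the constant really must be $|B||C^{-1}|$ to match Lemma~\ref{lemma4}, a hidden constraint linking $B$ and $C$ (coming from the Lagrangian/symplectic structure in \cite{LMS03}, Proposition 1.5.4) must be exhibited and used; neither your proposal nor the statement in the paper supplies it.
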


The proof of Theorem~\ref{thmsplit} is now an obvious consequence of Theorem~\ref{thmsplit0}, Lemma~\ref{lemma4} and Lemma~\ref{lemma5}.

\section{Proof of the main results}\label{s4}

This section is devoted to the proofs of Theorem~\ref{thmlin} and Theorem~\ref{thmnonlin}.

\paraga Let us first state an approximation lemma of an arbitrary vector by linearly independent periodic vectors, which was proved in \cite{BF12}. Recall that a vector $v \in \R^n\setminus\{0\}$ is called periodic if there exists a real number $t>0$ such that $tv \in \Z^n$. In this case, $T=\inf\{t>0 \; | \; tv \in \Z^n\}$ is called the period of $v$, and a periodic vector with period $T$ will be simply called $T$-periodic. It is easy to see that a vector is periodic if and only if its minimal rational subspace is one-dimensional.

Now consider an arbitrary vector $\omega \in \R^n\setminus\{0\}$, and let $d$ be the dimension of its minimal rational subspace $F=F_\omega$. For a given $Q\geq 1$, it is always possible to find a $T$-periodic vector $v \in F$, which is a $Q$-approximation in the sense that $|T\omega - Tv|\leq Q^{-1}$, and for which the period $T$ satisfies the upper bound $T \MP Q^{d-1}$: this is essentially the content of Dirichlet's theorem. Then it is not hard to see that there exist not only one, but $d$ linearly independent periodic vectors in $F$ which are $Q$-approximations. Moreover, one can obtain not only linearly independent vectors, but periodic vectors $v_1,\dots,v_d$ of periods $T_1, \dots,T_d$ such that the integer vectors $T_1v_1,\dots,T_dv_d$ form a $\Z$-basis of $\Z^n \cap F$. However, the upper bound on the associated periods $T_1, \dots, T_d$ is necessarily bigger than $Q^{d-1}$, and is given by a function that we call here $\Psi_\omega'$ (once again, see \cite{BF12} for more precise and general information, but note that there $\Psi_\omega'$ was denoted by $\Psi_\omega$ and $\Psi_\omega$, which we defined in~\eqref{psi}, was denoted by $\Psi_\omega'$). The main Diophantine result of \cite{BF12} is that this function $\Psi_\omega'$ is in fact equivalent to the function $\Psi_\omega$, up to constants and for $Q$ large enough. This gives the following result.

\begin{proposition}\label{dio}
Let $\omega \in \R^n\setminus\{0\}$. For any $Q\PS 1$, there exist $d$ periodic vectors $v_1, \dots, v_d$, of periods $T_1, \dots, T_d$, such that $T_1v_1, \dots, T_dv_d$ form a $\Z$-basis of $\Z^n \cap F$ and for $j\in\{1,\dots,d\}$,
\[ |\omega-v_j|\MP(T_j Q)^{-1}, \quad 1 \MP T_j \MP \Psi_\omega(Q).\]
\end{proposition}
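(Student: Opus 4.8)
The plan is to deduce Proposition~\ref{dio} directly from the Diophantine machinery of \cite{BF12}, which is the natural black box here. First I would recall the setup from that paper: to a vector $\omega \in \R^n\setminus\{0\}$ with minimal rational subspace $F=F_\omega$ of dimension $d$, one associates, for each $Q\geq 1$, the quantity $\Psi_\omega'(Q)$ defined as the smallest bound $S$ for which there exist periodic vectors $v_1,\dots,v_d$ with periods $T_1,\dots,T_d\leq S$, whose rescalings $T_1v_1,\dots,T_dv_d$ form a $\Z$-basis of the lattice $\Z^n\cap F$, and with each $v_j$ being a $Q$-approximation of $\omega$ in the precise sense that $|\omega-v_j|\leq (T_jQ)^{-1}$. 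The existence of at least one such family (hence finiteness of $\Psi_\omega'(Q)$) is a Dirichlet-type/geometry-of-numbers argument carried out in \cite{BF12}; here I would simply quote it. The key input I then invoke is the main Diophantine theorem of \cite{BF12}, namely that $\Psi_\omega'$ and $\Psi_\omega$ (the function defined in~\eqref{psi}) are equivalent up to multiplicative constants depending only on $n$ and $\omega$, for $Q$ larger than some threshold $Q_\omega$: there are constants such that $\Psi_\omega'(Q)\MP \Psi_\omega(Q)$ (and conversely) for all $Q\PS 1$.

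Granting these two facts, the proposition is essentially a reformulation. Fix $Q\PS 1$ large enough that both the equivalence $\Psi_\omega'(Q)\MP \Psi_\omega(Q)$ holds and $\Psi_\omega'(Q)$ is finite. Take the family $v_1,\dots,v_d$ realizing (or nearly realizing, up to a fixed constant factor) the value $\Psi_\omega'(Q)$: by construction the vectors $T_jv_j$ form a $\Z$-basis of $\Z^n\cap F$, each $v_j$ is periodic of period $T_j$, and $|\omega-v_j|\leq (T_jQ)^{-1}\MP (T_jQ)^{-1}$. The upper bound $T_j\leq \Psi_\omega'(Q)\MP \Psi_\omega(Q)$ is then exactly the claimed $T_j\MP \Psi_\omega(Q)$. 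For the lower bound $T_j\PS 1$: since $v_j\neq 0$ is periodic, $T_jv_j\in\Z^n\setminus\{0\}$, so $|T_jv_j|\geq 1$, hence $T_j\geq |v_j|^{-1}$; and because $v_j$ is within $(T_jQ)^{-1}\leq Q^{-1}$ of the fixed nonzero vector $\omega$, one has $|v_j|\leq |\omega|+1$ once $Q\geq 1$, giving $T_j\geq (|\omega|+1)^{-1}\PS 1$. This establishes $1\MP T_j\MP \Psi_\omega(Q)$ and $|\omega-v_j|\MP (T_jQ)^{-1}$ for all $j$, which is the statement.

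The genuine mathematical content — the only step that is not bookkeeping — is the equivalence $\Psi_\omega'\asymp\Psi_\omega$, and this is precisely the result being imported from \cite{BF12}, so in this paper it is not re-proved. Consequently I expect the ``main obstacle'' here to be purely expository rather than substantive: one must be careful about the (acknowledged) clash of notation between the two papers (in \cite{BF12} the roles of the symbols $\Psi_\omega$ and $\Psi_\omega'$ are swapped relative to this paper), and one must make sure the definition of ``$Q$-approximation'' used to define $\Psi_\omega'$ matches the weighted form $|\omega-v_j|\MP(T_jQ)^{-1}$ appearing in the statement rather than the unweighted $|\omega-v_j|\MP Q^{-1}$. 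Once those identifications are pinned down, the proof is the three-line extraction above, together with the elementary lower bound on $T_j$. I would therefore keep this proof short, quoting Proposition/Theorem of \cite{BF12} by number for the equivalence and the existence of the basis, and spelling out only the trivial lower bound $T_j\PS 1$ which is not stated there.
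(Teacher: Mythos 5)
Your proposal takes exactly the approach of the paper: the paper itself treats this as a black-box import, writing only ``For the proof, we refer to \cite{BF12}, Proposition 2.3,'' which is the same citation you identify (the equivalence of the two $\Psi$-functions up to constants, together with the Dirichlet-type existence of a $\Z$-basis of $\Z^n\cap F$ by $Q$-approximating periodic vectors). Your bookkeeping of the notational swap between the two papers is also accurate.

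The one place you chose to add a genuine argument — the lower bound $T_j \PS 1$ — is circular as written. You claim $|\omega-v_j|\leq (T_jQ)^{-1}\leq Q^{-1}$ for $Q\geq 1$ to get $|v_j|\leq|\omega|+1$, but the step $(T_jQ)^{-1}\leq Q^{-1}$ already presupposes $T_j\geq 1$, which is what you are trying to establish. The repair is a one-liner: keep $T_j$ in the estimate. From $T_jv_j\in\Z^n\setminus\{0\}$ one has $T_j\geq |v_j|^{-1}$ (in the supremum norm), while $|v_j|\leq |\omega|+c(T_jQ)^{-1}$ for the implicit constant $c$ in the $Q$-approximation. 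Combining, $T_j^{-1}\leq |\omega|+cT_j^{-1}Q^{-1}$, hence $T_j^{-1}(1-cQ^{-1})\leq|\omega|$, so $T_j\geq (2|\omega|)^{-1}$ once $Q\geq 2c$. This does not change the structure of your argument, and in any case the paper does not supply this step either — it simply leans on the statement of \cite{BF12}.
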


For the proof, we refer to \cite{BF12}, Proposition $2.3$. The implicit constants depends only on $d$ and $\omega$ (the dependence on $\omega$ is through its norm $|\omega|$ and the discriminant of the lattice $\Z^n \cap F$).

Now a consequence of the fact that the vectors $T_1v_1, \dots, T_dv_d$ form a $\Z$-basis of $\Z^n \cap F$ is contained in the following corollary. For simplicity, we shall write $[\,\cdot\,]_{v_1,\dots,v_d}=[\cdots[\,\cdot\,]_{v_1}\cdots]_{v_d}$, where $[\,\cdot\,]_w$ has been defined for an arbitrary vector $w$ in~\eqref{ave}.

\begin{corollary}\label{cordio}
Under the assumptions of Proposition~\ref{dio}, let $l_\omega(I)=\omega\cdot I$ and $l_{v_j}(I)=v_j\cdot I$ for $j\in\{1,\dots,d\}$. For any $g\in C^{1}(\mathcal{D}_R)$, we have $[g]_\omega=[g]_{v_1,\dots,v_d}$ and therefore $\{g,l_\omega\}=0$ if and only if $\{g,l_{v_j}\}=0$ for any $j\in\{1,\dots,d\}$.
\end{corollary}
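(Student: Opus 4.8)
The key fact is that the lattice $\Lambda = \Z^n \cap F$ is exactly the set of integer vectors orthogonal to all ``extra'' relations among the $v_j$, and that membership in $\Lambda$ is what controls whether the average $[\,\cdot\,]_\omega$ kills a Fourier mode. So the plan is to argue on the level of Fourier coefficients, or equivalently, to characterize $[g]_\omega$ as the projection onto those Fourier modes $k$ with $k \cdot \omega = 0$, and to show that, thanks to the $\Z$-basis property, this coincides with the successive projections defining $[g]_{v_1,\dots,v_d}$.

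First I would recall the elementary description of the averages. For a single vector $w$, the average $[g]_w$ is the orthogonal projection (in $L^2$, say, but everything is fine pointwise for $C^1$ functions when the frequency is periodic) onto the subspace of functions $g$ with $\{g,l_w\}=0$; in terms of a Fourier expansion $g = \sum_{k \in \Z^n} g_k(I) e^{2\pi i k\cdot\theta}$, one has $[g]_w = \sum_{k\cdot w = 0} g_k(I) e^{2\pi i k\cdot\theta}$. Iterating, $[g]_{v_1,\dots,v_d}$ retains exactly the modes $k$ with $k\cdot v_1 = \cdots = k\cdot v_d = 0$, i.e.\ the modes $k$ lying in the orthogonal complement $V^\perp$ of $V := \mathrm{span}_\R(v_1,\dots,v_d)$. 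Similarly $[g]_\omega$ retains the modes $k$ with $k\cdot\omega = 0$, i.e.\ $k \in F^\perp$ (using that $\{g,l_\omega\}=0 \iff g\circ X_\omega^t = g \iff g = [g]_\omega$, as noted after~\eqref{ave}).

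So the whole statement reduces to the purely arithmetic claim: for $k \in \Z^n$, one has $k \cdot \omega = 0$ if and only if $k \cdot v_j = 0$ for all $j$. The direction ``$\Leftarrow$'' is clear since $\omega \in F$ and, by Proposition~\ref{dio}, the $v_j$ span $F$ over $\R$ (the $T_j v_j$ form a $\Z$-basis of $\Lambda$, hence an $\R$-basis of $F$), so $k \perp V = F \ni \omega$. For ``$\Rightarrow$'': if $k\in\Z^n$ satisfies $k\cdot\omega=0$, decompose $k = k' + k''$ with $k' \in F$, $k'' \in F^\perp$; since $\omega\in F$ one gets $k'\cdot\omega=0$. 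Now here is where the $\Z$-basis property is used: for an arbitrary $k\in\Z^n$ the component $k'$ need not be integral, but the relevant point is different — I want to conclude $k\cdot v_j = 0$. Write $v_j = T_j^{-1}(T_j v_j)$ with $T_j v_j \in \Lambda \subseteq F$; since $k'' \in F^\perp$ we have $k\cdot v_j = k'\cdot v_j$. The cleanest route is: both $\omega$ and each $v_j$ lie in $F$, which is $d$-dimensional; the orthogonality conditions $k\cdot(\cdot)=0$ define a hyperplane in $F$; but this does \emph{not} force $k\cdot\omega=0 \Rightarrow k\cdot v_j=0$ unless $\omega$ and $v_j$ are collinear, which they are not. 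So the Fourier argument must be done honestly: $[g]_\omega$ keeps modes in $F^\perp \cap \Z^n$, while $[g]_{v_1,\dots,v_d}$ keeps modes in $V^\perp \cap \Z^n = F^\perp \cap \Z^n$ since $V = F$. That equality $V = F$ is the content of the $\Z$-basis statement, and is the only place it enters. I expect the main obstacle, such as it is, to be purely bookkeeping: being careful that $[\,\cdot\,]_{v_j}$ is genuinely a Fourier projection (legitimate because each $v_j$ is periodic, so $X_{v_j}^t$ is a periodic flow and the time-average in~\eqref{ave} is an honest average over one period, converging in $C^0$ for $C^1$ data), and that the iterated averages compose to give projection onto $\bigcap_j \{k\cdot v_j = 0\}$, which requires noting that each $[\,\cdot\,]_{v_j}$ preserves the relevant function spaces and commutes appropriately. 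Once $V=F$ is in hand, the equivalence $\{g,l_\omega\}=0 \iff \forall j,\ \{g,l_{v_j}\}=0$ is immediate from $g = [g]_\omega = [g]_{v_1,\dots,v_d}$ together with the single-vector characterization $\{g,l_w\}=0 \iff g=[g]_w$.
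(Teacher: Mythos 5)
Your proof is correct but takes a genuinely different route from the paper's. The paper represents both averages as integrals over the compact torus $F/\Lambda$, invoking unique ergodicity and Birkhoff's ergodic theorem to identify $[g]_\omega$ with the Haar average, and then uses the $\Z$-basis property of $T_1v_1,\dots,T_dv_d$ explicitly in a change-of-variables step to recognize the iterated periodic averages $[g]_{v_1,\dots,v_d}$ as the same Haar integral. You instead work on the Fourier side, characterizing each average as the projection onto modes $k$ with $k\cdot w=0$, so that the whole statement collapses to the observation that $F^\perp\cap\Z^n = V^\perp\cap\Z^n$ for $V=\mathrm{span}_\R(v_1,\dots,v_d)=F$. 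Your route is more elementary (no ergodic theorem), and it isolates the fact that for \emph{this} corollary only linear independence of the $v_j$ over $\R$ is needed, not the full $\Z$-basis property; the latter is exactly what the paper's change-of-variables exploits, and is of course used crucially elsewhere in the paper. Two small remarks on your writeup: the detour via the decomposition $k=k'+k''$ is a blind alley, as you yourself notice, and can simply be deleted. And the step ``$k\cdot\omega=0$ i.e.\ $k\in F^\perp$'' (for $k\in\Z^n$) is the only place where the minimality of $F$ as a rational subspace enters; it deserves a one-line justification, namely that $k^\perp$ is a rational hyperplane containing $\omega$, so by minimality $F\subseteq k^\perp$.
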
 

\begin{proof}
Let $\Lambda=\Z^n \cap F$ and $d\vartheta$ be the Haar measure on the compact quotient group $F/\Lambda$. The flow $X_\omega^t$ leaves invariant the foliation on $\T^n$ induced by the trivial affine foliation on $\R^n$ defined by $F$. Each leaf of this foliation on $\T^n$ is diffeomorphic to $F/\Lambda$, and the restriction of $X_\omega^t$ to each leaf is uniquely ergodic. By Birkhoff's ergodic theorem, we have
\[ [g]_{\omega}=\lim_{s\rightarrow +\infty}\frac{1}{s}\int_{0}^{s}g\circ X_{\omega}^{t}dt=\int_{\vartheta \in F/\Lambda} g\circ X_{\vartheta}^{1} d\vartheta. \]
Also, as $T_jv_j$ is an integer vector, we have
\[ [g]_{v_j}=\lim_{s\rightarrow +\infty}\frac{1}{s}\int_{0}^{s}g\circ X_{v_j}^{t}dt=\frac{1}{T}\int_{0}^{T}g\circ X_{v_j}^{t}dt=\int_{0}^{1}g\circ X_{T_jv_j}^t dt. \]
Using the fact that the vectors $T_1v_1, \dots, T_dv_d$ form a $\Z$-basis of $\Lambda$, the first assertion follows easily from these expressions by a change of variables. Now if $\{g,l_{v_j}\}=0$ for any $j\in\{1,\dots,d\}$, then $g=[g]_{v_j}$ and so by the first assertion, $g=[g]_\omega$, which means that $\{g,l_\omega\}=0$. Conversely, if $\{g,l_\omega\}=0$, then $g=[g]_\omega$ and therefore $g=[g]_{v_1,\dots,v_d}$ by the first assertion. Since the maps $[\,\cdot\,]_{v_j}$ are projectors (that is $[\,\cdot\,]_{v_j,v_j}=[\,\cdot\,]_{v_j}$), this implies that 
\[ [g]_{v_d}=[g]_{v_1,\dots,v_d,v_d}=[g]_{v_1,\dots,v_d}=g, \]
and since they commute, we eventually find $[g]_{v_j}=g$ for any $j\in\{1,\dots,d\}$ and therefore $\{g,l_{v_j}\}=0$ for any $j\in\{1,\dots,d\}$.    
\end{proof}

\paraga Then we shall make use of the statement of Theorem~\ref{thmlin} (or Corollary~\ref{thmlinD}) in the particular case $d=1$, that is when $F$ is one-dimensional, which was proved in \cite{MS02}. As we already said, in this situation the vector is in fact periodic so we shall denote it by $v$, and for any non-zero integer vector $k \in F$, we have the lower bound $|k\cdot v| \geq T^{-1}$ and so we will have $\Delta_v^*(\varepsilon^{-1})\geq (T\varepsilon)^{-1}$ in the statement of Theorem~\ref{thmlin} (or $\tau=0$ and $\gamma=T^{-1}$ in the statement of Corollary~\ref{thmlinD}) for this particular case.

For subsequent use, we introduce another parameter $\nu>0$ and we consider the Hamiltonian
\begin{equation}\label{Hper}
\begin{cases} 
H(\theta,I)=l_v(I)+s(I)+u(\theta,I), \quad (\theta,I)\in \mathcal{D}_R, \\
Tv \in \Z^n, \quad |s|_{\alpha,L} \leq \nu, \quad |u|_{\alpha,L} \leq \varepsilon. 
\end{cases}
\end{equation}
Let us define $\delta=(2d)^{-1}R$. Then we have the following result.

\begin{proposition}\label{ana}
Let $H$ be as in \eqref{Hper}, and assume that
\begin{equation}\label{condana}
\varepsilon \MP \nu, \quad T\nu \MP 1.
\end{equation}
Then there exists a symplectic map $\Phi \in G^{\alpha,L'}(\mathcal{D}_{R-\delta},\mathcal{D}_{R})$, with $C \EP 1$ and $L'=CL$, such that
\[ H\circ\Phi=l_v+s+[u]_v+u'+\tilde{u}, \quad \{u',l_v\}=0 \]
with the estimates
\[ |\Phi-\mathrm{Id}|_{\alpha,L'} \MP T\nu, \quad |u'|_{\alpha,L'} \MP \varepsilon T\nu, \quad |\tilde{u}|_{\alpha,L'} \MP \varepsilon \exp(-\cdot (T\nu)^{-1/\alpha}).  \]
\end{proposition}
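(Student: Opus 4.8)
The plan is to build $\Phi$ as a finite composition $\Phi=\Phi_1\circ\cdots\circ\Phi_N$ of elementary symplectic maps produced by the composition‑of‑periodic‑averaging technique of \cite{BN09} and \cite{MS02}, the key structural point being to keep $l_v+s$ as a \emph{commuting core} and to average only the genuine perturbation. Since $s$ depends on the action alone we have $\{l_v+s,l_v\}=0$, so every averaging step can be carried out along the flow $X_v^t$ of $l_v$ — which is periodic of period $T$ because $Tv\in\Z^n$ — while $s$ is simply transported unchanged. This is what yields the sharp bound $|u'|_{\alpha,L'}\MP\varepsilon T\nu$ rather than the weaker $T\nu^2$ one would get by absorbing $s$ into the perturbation and invoking the $d=1$ case of Corollary~\ref{thmlinD}: the dominant error created at one averaging step is a Poisson bracket of $s$ with the corresponding generating Hamiltonian, hence of size comparable to $\nu$ times the (small, of order $\varepsilon$) current perturbation. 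Concretely, a first ``gentle'' step — spending only a fixed fraction of the Gevrey width — will bring $H$ to the form $l_v+s+[u]_v+u^{(1)}+f^{(1)}$ with $\{u^{(1)},l_v\}=0$ and $|u^{(1)}|_{\alpha,\cdot},|f^{(1)}|_{\alpha,\cdot}\MP\varepsilon T\nu$, and the remaining steps, each spending a small slice of the remaining width, will push $f^{(k)}$ down to exponential smallness while adding only $O(\varepsilon(T\nu)^2)$ more to the resonant part.

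For the generic step, suppose the Hamiltonian has the form $l_v+s+g_k+f_k$ with $\{g_k,l_v\}=0$ and $|g_k|_{\alpha,L_k},|f_k|_{\alpha,L_k}\MP\varepsilon$. Because $X_v^t$ is $T$‑periodic, the homological equation $\{l_v,\chi_{k+1}\}=[f_k]_v-f_k$ is explicitly solvable by a weighted time‑average of $f_k-[f_k]_v$ along $X_v^t$ over $[0,T]$; as this solution is built from compositions with the translations $X_v^t$, which preserve $\mathcal{D}_R$ and act isometrically on the $C^0$‑norm of every derivative, one gets $|\chi_{k+1}|_{\alpha,L_k}\MP T|f_k|_{\alpha,L_k}$ with no loss of regularity. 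Let $\Phi_{k+1}$ be the time‑one flow of the Hamiltonian vector field of $\chi_{k+1}$; expanding $H_k\circ\Phi_{k+1}$ by the Lie series, the first‑order term $\{l_v,\chi_{k+1}\}+f_k=[f_k]_v$ is resonant and is absorbed into the core, leaving $l_v+s+g_{k+1}+f_{k+1}$ with $g_{k+1}=g_k+[f_k]_v$ (still commuting with $l_v$) and $f_{k+1}$ collecting $\{s,\chi_{k+1}\}$, $\{g_k,\chi_{k+1}\}$ and the second‑order Lie terms. Using the Gevrey product and Cauchy estimates of the appendix together with $\varepsilon\MP\nu$ and $T\nu\MP1$, one obtains $|f_{k+1}|_{\alpha,L_{k+1}}\MP (L_k-L_{k+1})^{-\cdot}\,T\nu\,|f_k|_{\alpha,L_k}$ (with a separate, easily met condition on the action radius spent, to make $\Phi_{k+1}$ well defined) and $|g_{k+1}|_{\alpha,L_{k+1}}\le|g_k|_{\alpha,L_k}+|f_k|_{\alpha,L_k}$. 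Choosing the width budget so that the first slice is a fixed fraction of $L$ and the remaining $N-1$ slices are equal and $\EP L/N$, the generic multiplier $(L_k-L_{k+1})^{-\cdot}T\nu$ becomes a fixed number $<1$ as soon as $N\EP(T\nu)^{-1/\alpha}$ (chosen with a small enough constant), so $|f_k|_{\alpha,L_k}$ decays geometrically after the first step.

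Starting from $g_0=[u]_v$, $f_0=u-[u]_v$ (so $l_v+s+g_0+f_0=H$ and $|f_0|_{\alpha,L}\MP\varepsilon$) and iterating $N$ times, one gets $H\circ\Phi=l_v+s+g_N+f_N$ with $g_N=[u]_v+\sum_{k\ge1}[f_k]_v$. Since $|f_1|_{\alpha,L_1}\MP T\nu\,\varepsilon$ (cheap first step) and $|f_{k+1}|_{\alpha,L_{k+1}}\MP\kappa|f_k|_{\alpha,L_k}$ with some $\kappa<1$ for $k\ge1$, this gives $|f_N|_{\alpha,L_N}\MP\varepsilon\exp(-\cdot(T\nu)^{-1/\alpha})$ and $\sum_{k\ge1}|f_k|_{\alpha,L_k}\MP T\nu\,\varepsilon$. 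Setting $u'=\sum_{k\ge1}[f_k]_v=g_N-[u]_v$ (so $\{u',l_v\}=0$ and $|u'|_{\alpha,L'}\MP\varepsilon T\nu$), $\tilde u=f_N$, $L'=L_N=CL$ with $C\EP1$, and $\Phi=\Phi_1\circ\cdots\circ\Phi_N\in G^{\alpha,L'}(\mathcal{D}_{R-\delta},\mathcal{D}_R)$, and noting that $|\Phi-\mathrm{Id}|_{\alpha,L'}\MP\sum_k|\chi_k|_{\alpha,\cdot}\MP T\sum_k|f_k|_{\alpha,\cdot}\MP T\varepsilon\MP T\nu$, yields the proposition. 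I expect the main obstacle to be precisely the bookkeeping of the regularity losses across the $\sim(T\nu)^{-1/\alpha}$ steps: one must distribute the Gevrey‑width and action‑radius budgets so that the product of the one‑step loss factors stays sub‑exponential in $N$, and check that the Gevrey Cauchy estimate (of the form $|\partial^lf|_{\alpha,L-\eta}\MP\eta^{-|l|\alpha}|f|_{\alpha,L}$) feeds into the optimization so as to produce exactly the exponent $1/\alpha$ in the exponential — together with the routine but necessary verification, using the smallness hypotheses~\eqref{condana} with small enough implicit constants, that each $\Phi_k$ is well defined and close to the identity on the successively shrunk domains.
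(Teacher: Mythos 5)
The paper does not prove this proposition itself: it cites \cite{MS02}, Proposition~3.2, which in turn is a by-product of the stronger result that both the normalizing transformation and the normal form admit Gevrey-$\alpha$ asymptotic expansions (see the discussion at the end of \S\ref{s1}). Your proposal therefore takes a genuinely different route, a direct finite Nekhoroshev-type iteration of $T$-periodic averagings with the number of steps optimized to $N\EP(T\nu)^{-1/\alpha}$. The structural scheme is sound, and you correctly identify the two mechanisms that produce the sharp exponent and the bound $|u'|\MP\varepsilon T\nu$: the $T$-periodic homological equation $\{l_v,\chi\}=[f]_v-f$ is solved by a weighted time-average along $X_v^t$ with no loss of Gevrey width, and since $s=s(I)$ the dominant new term $\{s,\chi\}=-\partial_I s\cdot\partial_\theta\chi$ costs only one small-slice derivative (as $\partial_I s$ can be bounded once and for all by $|s|_{\alpha,L}\leq\nu$), so the per-step multiplier is $\eta^{-\alpha}T\nu$ and not $\eta^{-2\alpha}T\nu^2$. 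You should make this last point explicit, since a $2\alpha$-loss would yield the wrong exponent $1/(2\alpha)$.

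That said, there is a genuine gap between your sketch and a complete argument, and it lies exactly where you place the ``main obstacle'': the appendix of this paper does not contain the estimates you invoke. Lemma~\ref{deriv} gives a derivative bound only at the coarse scale $L\mapsto L/2$, not the fine Cauchy-type inequality $|\partial^l f|_{\alpha,L-\eta}\MP\eta^{-|l|\alpha}|f|_{\alpha,L}$, and no Gevrey product or Lie-bracket estimate compatible with a small slice $\eta$ is stated. More seriously, Lemma~\ref{propcomp} and Corollary~\ref{corcomp} incur a fixed multiplicative degradation $L\mapsto CL$ and a fixed domain slice $\delta$ per composition; that is adequate for the bounded number $d\leq n$ of compositions in the proof of Proposition~\ref{proplin}, but applied naively it would accumulate a factor $C^N$ over your $N\EP(T\nu)^{-1/\alpha}$ steps, which destroys the estimate. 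A rigorous implementation of your plan requires the refined Gevrey composition and Lie-series estimates of \cite{MS02}, with per-step losses controlled by the slice width uniformly over an unbounded number of steps; that is the technical substance of the proof the paper delegates, and it is not merely bookkeeping.
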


For the Hamiltonian~\eqref{Hper}, the term $l_v$ is considered as unperturbed, and $s+u$ is the perturbation. Since we have assumed that $\varepsilon\MP\mu$, the size of the perturbation is of order $\mu$, but as $s$ is integrable, the size of the non-resonant part of the perturbation is of order $\varepsilon$. Note that if we are only interested in the periodic case, then one may take $s=0$ in~\eqref{Hper}, $\varepsilon=\nu$ and write $u=f$ in the statement of Proposition~\ref{ana}, and this gives exactly the statement of Theorem~\ref{thmlin}.

For a proof of Proposition~\ref{ana}, we refer to \cite{MS02}, Proposition $3.2$ (note that in \cite{MS02}, the size of the perturbation is called $\varepsilon$ instead of $\mu$, and the size of the non-resonant part of the perturbation is called $\varepsilon'$ instead of $\varepsilon$). The implicit constants in the above statement depends only on $n,R,\alpha$ and $L$.

Let us now state a simple algebraic property (in our restricted setting), which complements Proposition~\ref{ana}.

\begin{proposition}\label{anaad}
Under the assumptions of Proposition~\ref{ana}, suppose that $l_w$ is a linear integrable Hamiltonian such that $\{u,l_w\}=0$. Then, in the conclusions of Proposition~\ref{ana}, we have $\{u',l_w\}=0.$
\end{proposition}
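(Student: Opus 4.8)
The transformation $\Phi$ produced in Proposition~\ref{ana} is built as a composition (or limit of compositions) of time-one maps of Hamiltonian flows generated by auxiliary functions obtained by solving homological equations relative to the periodic flow $X_v^t$. The key observation is that solving the homological equation $\{\chi,l_v\} = \psi - [\psi]_v$ by the explicit formula $\chi = \int_0^T \left(\frac{t}{T}-1\right)\psi\circ X_v^t\, dt$ (or the analogous averaging expression used in \cite{MS02}) has the property that if $\psi$ commutes with $l_w$, then so does $\chi$, because $X_v^t$ and $X_w^t$ commute (both are linear flows, hence their flows commute and the average over one does not disturb invariance under the other). From this, the plan is to prove by induction on the steps of the iterative scheme of \cite{MS02} that the invariance $\{\,\cdot\,,l_w\}=0$ is preserved at every stage, so that in particular it holds for the final resonant term $u'$.

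First I would set up the induction. Write $H\circ\Phi$ as the result of the iterative normalization, $H_0 = H$, $H_{k+1} = H_k\circ\Phi_k$ where $\Phi_k = X_{\chi_k}^1$ and $\chi_k$ solves a homological equation with right-hand side built from (the non-resonant, non-integrable part of) $H_k$. The inductive hypothesis is: the perturbative part of $H_k$ — more precisely every ``new'' term created so far, and in particular the pieces that will accumulate into $u'$ and $\tilde u$ — Poisson-commutes with $l_w$. The base case is the hypothesis $\{u,l_w\}=0$ (note $\{l_v,l_w\}=0$ and $\{s,l_w\}$: here one should check that $s$ is also $l_w$-invariant, or observe that $s$ is carried along untouched; in fact in Proposition~\ref{ana} the term $s$ appears unchanged in the conclusion, so only $u$-derived terms matter for $u'$).

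The inductive step has two ingredients. (i) $\chi_k$ inherits $l_w$-invariance from the right-hand side of its homological equation: since $\{l_v,l_w\}=0$, the flows $X_v^t,X_w^t$ commute, so $[\,\cdot\,]_v$ commutes with the operator $g\mapsto g\circ X_w^t$, and the explicit integral formula for $\chi_k$ over the period of $v$ then commutes with precomposition by $X_w^t$; hence $\{\chi_k,l_w\}=0$. (ii) If $\{\chi_k,l_w\}=0$ and $\{H_k - (\text{the }l_w\text{-invariant part}),l_w\}$ is under control, then $H_{k+1}=H_k\circ X_{\chi_k}^1 = \sum_j \frac{1}{j!}\{H_k,\chi_k\}_j$ is still $l_w$-invariant term-by-term, because the Poisson bracket of two $l_w$-invariant functions is again $l_w$-invariant (Jacobi identity: $\{\{f,g\},l_w\} = \{\{f,l_w\},g\}+\{f,\{g,l_w\}\}=0$). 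Passing to the limit (the remainder $\tilde u$ is a tail of the scheme, $u'$ the accumulated resonant part), both $u'$ and $\tilde u$ are $l_w$-invariant, which in particular gives $\{u',l_w\}=0$.

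**Main obstacle.** The only real subtlety is that this argument is not purely formal — it relies on the specific construction of $\Phi$ in \cite{MS02}, so I must make sure the bookkeeping matches their scheme: which precise homological equation is solved, whether the averaging is over $X_v^t$ (good) or over some modified flow, and whether the ``integrable part'' $s$ is genuinely transported unchanged. If \cite{MS02} uses a slightly different device (e.g. truncation in Fourier modes of $\theta$ along $v$), one must check that device also commutes with $X_w^t$; since $l_w$-invariance of a function is equivalent to invariance under the sublattice of Fourier modes orthogonal to $w$, and $v$-averaging kills exactly the $v$-direction modes, the two operations are compatible, but this compatibility is the point that needs to be stated carefully rather than waved through.
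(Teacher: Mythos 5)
Your proposal is correct and supplies the argument that the paper itself chooses to cite rather than spell out: the paper's ``proof'' of Proposition~\ref{anaad} consists of a pointer to \cite{BN09} and \cite{Bou11} plus the remark that the property is ``purely algebraic,'' which is precisely what your argument exhibits (commuting linear flows $X_v^t$ and $X_w^s$, hence $l_w$-equivariance of both the $v$-averaging operator $[\,\cdot\,]_v$ and the homological-equation solver, together with the Jacobi identity to propagate $l_w$-invariance through the Lie-series construction of $\Phi$). One small point to tighten: in your parenthetical on $s$, the second alternative — that $s$ ``is carried along untouched'' — is not accurate: although $s$ reappears verbatim in the conclusion of Proposition~\ref{ana}, it does feed into the Lie-series brackets $\{s,\chi_k\}$ that generate $u'$ and $\tilde u$, so it cannot simply be dropped from the bookkeeping. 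The correct (and trivial) resolution is your first alternative: $s=s(I)$ depends only on the actions, and any function of $I$ alone Poisson-commutes with $l_w=w\cdot I$, so $\{s,l_w\}=0$ automatically and the base case of your induction holds for the entire Hamiltonian $l_v+s+u$.
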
  

This property has been used several times (this was first used in \cite{Bam99} and \cite{Pos99b}) and is valid under much more general assumptions, we refer to \cite{BN09} for a proof in the analytic case (see also \cite{Bou11}). Of course, this is a purely algebraic property, and is valid regardless of the regularity of the system.

\paraga Now we can finally prove Theorem~\ref{thmlin}, which is a straightforward consequence of the more flexible proposition below.

\begin{proposition}\label{proplin}
Let $H$ be as in \eqref{Hlin}, and $Q\geq 1$. If 
\begin{equation}\label{condlin}
Q \PS 1, \quad \varepsilon \MP \Delta_\omega(Q)^{-1},
\end{equation}
then there exists a symplectic map $\Phi \in G^{\alpha,\tilde{L}}(\mathcal{D}_{R/2},\mathcal{D}_R)$, with $\tilde{L}\EP L$, such that 
\[ H \circ \Phi = l_\omega +[f]_\omega+ g + \tilde{f}, \quad \{g,l_\omega\}=0  \]
with the estimates
\[ |\Phi-\mathrm{Id}|_{\alpha,\tilde{L}} \MP Q^{-1}, \quad |g|_{\alpha,\tilde{L}}\MP \varepsilon Q^{-1}, \quad |\tilde{f}|_{\alpha,\tilde{L}}\MP \varepsilon \exp\left(-\cdot Q^{-1/\alpha}\right). \]
\end{proposition}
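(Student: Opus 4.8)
The plan is to deduce Proposition~\ref{proplin} from the periodic case, Proposition~\ref{ana}, by applying the latter $d$ times --- once along each vector of a system of independent periodic approximations of $\omega$ --- and using Proposition~\ref{anaad} at every step to preserve the resonant structure already produced. I would first apply Proposition~\ref{dio} with the given $Q$ to obtain periodic vectors $v_1,\dots,v_d$ of periods $T_1,\dots,T_d$, with $T_1v_1,\dots,T_dv_d$ a $\Z$-basis of $\Z^n\cap F$, $|\omega-v_j|\MP(T_jQ)^{-1}$ and $1\MP T_j\MP\Psi_\omega(Q)$. Set $\nu_j\EP(T_jQ)^{-1}$; since $l_{\omega-v_j}$ is linear, its Gevrey norm (at any of the widths involved) is $\MP|\omega-v_j|\MP\nu_j$. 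The hypotheses~\eqref{condana} of Proposition~\ref{ana} along $v_j$ then hold for every $j$: $T_j\nu_j\EP Q^{-1}\MP1$ by~\eqref{condlin}, and $\varepsilon\MP\Delta_\omega(Q)^{-1}=(Q\Psi_\omega(Q))^{-1}\MP(T_jQ)^{-1}\EP\nu_j$ using~\eqref{condlin} and $T_j\MP\Psi_\omega(Q)$.

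I would then prove by induction on $j\in\{0,\dots,d\}$ that there is a symplectic $\Psi_j\in G^{\alpha,L^{(j)}}$, with $L^{(j)}=C^jL\EP L$ and defined on a domain containing $\mathcal{D}_{R-j\delta}$ where $\delta=(2d)^{-1}R$, such that
\[ H\circ\Psi_j=l_\omega+f_j+\tilde f_j,\qquad\{f_j,l_{v_i}\}=0\ \text{for}\ 1\le i\le j, \]
with $|f_j|_{\alpha,L^{(j)}}\MP\varepsilon$, $|f_j-[f]_{v_1,\dots,v_j}|_{\alpha,L^{(j)}}\MP\varepsilon Q^{-1}$, $|\tilde f_j|_{\alpha,L^{(j)}}\MP\varepsilon\exp(-\cdot Q^{1/\alpha})$, and $|\Psi_j-\mathrm{Id}|_{\alpha,L^{(j)}}\MP Q^{-1}$ for $j\ge1$; the case $j=0$ is trivial ($\Psi_0=\mathrm{Id}$, $f_0=f$, $\tilde f_0=0$). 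For the step $j\to j+1$, I write $l_\omega+f_j=l_{v_{j+1}}+l_{\omega-v_{j+1}}+f_j$ and apply Proposition~\ref{ana} with $v=v_{j+1}$, $s=l_{\omega-v_{j+1}}$, $u=f_j$ (legitimate since $|f_j|\MP\varepsilon$ and~\eqref{condana} was checked), obtaining $\Phi_{j+1}$ with
\[ (l_\omega+f_j)\circ\Phi_{j+1}=l_\omega+[f_j]_{v_{j+1}}+f_j'+r_{j+1},\quad\{f_j',l_{v_{j+1}}\}=0, \]
$|f_j'|\MP\varepsilon T_{j+1}\nu_{j+1}\EP\varepsilon Q^{-1}$, $|r_{j+1}|\MP\varepsilon\exp(-\cdot Q^{1/\alpha})$, $|\Phi_{j+1}-\mathrm{Id}|\MP Q^{-1}$. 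I set $\Psi_{j+1}=\Psi_j\circ\Phi_{j+1}$, $f_{j+1}=[f_j]_{v_{j+1}}+f_j'$ and $\tilde f_{j+1}=r_{j+1}+\tilde f_j\circ\Phi_{j+1}$, so $H\circ\Psi_{j+1}=l_\omega+f_{j+1}+\tilde f_{j+1}$. Crucially, $\{f_j,l_{v_i}\}=0$ for $i\le j$ lets Proposition~\ref{anaad} give $\{f_j',l_{v_i}\}=0$ for $i\le j$, while $[f_j]_{v_{j+1}}=[f_j]_{v_i,v_{j+1}}$ commutes with $l_{v_i}$ ($i\le j$) and with $l_{v_{j+1}}$ because the periodic averages commute and are idempotent (as shown in the proof of Corollary~\ref{cordio}); hence $\{f_{j+1},l_{v_i}\}=0$ for $1\le i\le j+1$.

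The estimates propagate: $|f_{j+1}|\MP|f_j|+|f_j'|\MP\varepsilon$; by linearity of the average, $f_{j+1}-[f]_{v_1,\dots,v_{j+1}}=\bigl[f_j-[f]_{v_1,\dots,v_j}\bigr]_{v_{j+1}}+f_j'$ has norm $\MP\varepsilon Q^{-1}$; $|\tilde f_{j+1}|\MP|r_{j+1}|+|\tilde f_j\circ\Phi_{j+1}|\MP\varepsilon\exp(-\cdot Q^{1/\alpha})$, the term $\tilde f_j\circ\Phi_{j+1}$ being controlled by the standard Gevrey composition estimates (collected in the appendix) together with $|\Phi_{j+1}-\mathrm{Id}|\MP Q^{-1}\MP1$; and $|\Psi_{j+1}-\mathrm{Id}|\MP Q^{-1}$ similarly, at the cost of the fixed factor $C$ in the Gevrey width. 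Since $d$ depends only on $\omega$, all accumulated constants (and $C^d$) depend only on $n,R,\omega,\alpha,L$, and after the $d$ steps the composed map is defined on a domain containing $\mathcal{D}_{R/2}$. I then set $\Phi=\Psi_d$, $\tilde f=\tilde f_d$, $g=f_d-[f]_\omega$ and $\tilde L=L^{(d)}\EP L$. By Corollary~\ref{cordio}, $[f]_\omega=[f]_{v_1,\dots,v_d}$ and $\{f_d,l_{v_i}\}=0$ for all $i$ implies $\{f_d,l_\omega\}=0$, so $\{g,l_\omega\}=0$ and $g=f_d-[f]_{v_1,\dots,v_d}$ satisfies $|g|_{\alpha,\tilde L}\MP\varepsilon Q^{-1}$; the bounds on $\Phi-\mathrm{Id}$ and $\tilde f$ are exactly the step-$d$ estimates. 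This gives $H\circ\Phi=l_\omega+[f]_\omega+g+\tilde f$ with all the required properties, and Theorem~\ref{thmlin} follows by taking $Q$ as large as allowed, i.e.\ $Q\EP\Delta_\omega^*(c\varepsilon^{-1})$.

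I expect the difficulty to be bookkeeping rather than conceptual. One must verify that dragging the exponentially small remainder $\tilde f_j$ through the subsequent near-identity changes of variables (via the Gevrey composition estimates) keeps it exponentially small while only losing a bounded factor of Gevrey width over the $d$ fixed steps, and simultaneously that $f_j$ stays of size $\varepsilon$ with its non-resonant defect $f_j-[f]_{v_1,\dots,v_j}$ of size $\varepsilon Q^{-1}$. The one genuinely structural point is that the remainder cannot be absorbed into the resonant part, so at each step I must feed only $f_j$ --- and not $f_j+\tilde f_j$ --- into Proposition~\ref{ana}, which is precisely what makes Proposition~\ref{anaad} applicable and what preserves the resonance conditions accumulated along $v_1,\dots,v_j$.
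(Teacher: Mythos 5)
Your proposal is correct and follows essentially the same route as the paper's own proof: periodic approximations from Proposition~\ref{dio}, a finite induction over $j=1,\dots,d$ that applies Proposition~\ref{ana} along $v_{j+1}$ to the current resonant part (never to the exponentially small remainder), Proposition~\ref{anaad} and commutativity of the periodic averages to propagate the resonance conditions, the Gevrey composition estimates to drag the remainder through the near-identity changes of variables, and Corollary~\ref{cordio} to identify $[f]_{v_1,\dots,v_d}$ with $[f]_\omega$ at the end. The only cosmetic difference is that you carry the resonant part as a single object $f_j$ (bounding $|f_j-[f]_{v_1,\dots,v_j}|\MP\varepsilon Q^{-1}$ and extracting $g=f_d-[f]_\omega$ at the end) whereas the paper keeps $[f]_{v_1,\dots,v_j}$ and $g_j$ as separate summands throughout; also note your $\exp(-\cdot Q^{1/\alpha})$ is the correct sign for the remainder estimate --- the exponent $Q^{-1/\alpha}$ in the displayed statement of Proposition~\ref{proplin} and in the paper's proof is a typo, as it follows from $(T_{j+1}\nu_{j+1})^{-1/\alpha}\EP Q^{1/\alpha}$ and is what makes the specialization $Q\EP\Delta_\omega^*(\cdot\varepsilon^{-1})$ consistent with Theorem~\ref{thmlin}.
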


\begin{proof}[Proof of Theorem~\ref{thmlin}]
We choose 
\[ Q = \Delta_\omega^*(\cdot\varepsilon^{-1}) \]
with a well-chosen implicit constant so that the second part of~\eqref{condlin} is satisfied. Proposition~\ref{proplin} with this value of $Q$ implies Theorem~\ref{thmlin}, as the first part of~\eqref{condlin} is satisfied by the threshold~\eqref{thr1}.  
\end{proof}

It remains to prove Proposition~\ref{proplin}.

\begin{proof}[Proof of Proposition~\ref{proplin}]
Recall that we are considering $H$ as in \eqref{Hlin}. Since $Q \PS 1$ by the first part of~\eqref{condlin}, we can apply Proposition~\ref{dio}: there exist $d$ periodic vectors $v_1, \dots, v_d$, of periods $T_1, \dots, T_d$, such that $T_1v_1, \dots, T_dv_d$ form a $\Z$-basis of $\Z^n \cap F$ and for $j\in\{1,\dots,d\}$,
\[ |\omega-v_j|\MP(T_j Q)^{-1}, \quad 1 \MP T_j \MP \Psi_\omega(Q) .\] 
For $j\in\{1,\dots,d\}$, let us define 
\[ s_j=l_\omega-l_{v_j}, \quad \nu_j \EP (T_j Q)^{-1} \]
with a suitable implicit constant so that $|s_j|_{\alpha,L} \leq \nu_j$. Note that $l_\omega=l_{v_j}+s_j$, and that $T_j\nu_j \EP Q^{-1}$. Let us further define $L_j=C^jL$ and $R_j=R-j\delta$ so that in particular $R_d=R/2$.

Then we claim that for all $1 \leq j \leq d$, there exists a symplectic map $\Phi_j \in G^{\alpha,L_j}(\mathcal{D}_{R_j},\mathcal{D}_R)$ such that 
\[ H\circ\Phi_j=l_\omega+[f]_{v_1,\dots,v_j}+g_j+f_j, \quad \{g_j,l_{v_i}\}=0 \]
for any $1 \leq i \leq j$, with the estimates
\[ |\Phi_j-\mathrm{Id}|_{\alpha,L_j} \MP Q^{-1}, \quad |g_j|_{\alpha,L_j} \MP \varepsilon Q^{-1}, \quad |f_j|_{\alpha,L_j} \MP \varepsilon \exp(-\cdot Q^{-1/\alpha}).  \]
The proof of the proposition follows from this claim: it is sufficient to let $\Phi=\Phi_d$, $g=g_d$, $\tilde{f}=f_d$ and $\tilde{L}=L_d$, since from Corollary~\ref{cordio}, $[f]_{v_1,\dots,v_d}=[f]_\omega$ and $\{g,l_{v_i}\}=0$ for $1 \leq i \leq d$ is equivalent to $\{g,l_{\omega}\}=0$.

Now let us to prove the claim by induction. For $j=1$, writing $l_\omega=l_{v_1}+s_1$, this is nothing but Proposition~\ref{ana} (up to a change of notations, namely $s_1$ instead of $s$, $f$ instead of $u$, $g_1$ instead of $u'$ and $f_1$ instead of $\tilde{u}$). So assume the statement holds true for some $1 \leq j \leq d-1$, and let us prove it is true for $j+1$. By the inductive assumption, there exists a symplectic map $\Phi_j \in G^{\alpha,L_j}(\mathcal{D}_{R_j},\mathcal{D}_R)$ such that 
\[ H\circ\Phi_j=l_\omega+[f]_{v_1,\dots,v_j}+g_j+f_j, \quad \{g_j,l_{v_i}\}=0,\] 
for any $1 \leq i \leq j$, with the estimates
\[ |\Phi_j-\mathrm{Id}|_{\alpha,L_j} \MP Q^{-1}, \quad |g_j|_{\alpha,L_j} \MP \varepsilon Q^{-1}, \quad |f_j|_{\alpha,L_j} \MP \varepsilon \exp(-\cdot Q^{-1/\alpha}).  \]
Now let us consider the Hamiltonian
\[ H\circ\Phi^j-f_j=l_\omega+[f]_{v_1,\dots,v_j}+g_j=l_{v_{j+1}}+s_{j+1}+[f]_{v_1,\dots,v_j}+g_j=l_{v_{j+1}}+s_{j+1}+u_j \]
with $u_j=[f]_{v_1,\dots,v_j}+g_j$, so $|u_j|_{\alpha,L_j} \MP (\varepsilon+\varepsilon Q^{-1}) \MP \varepsilon$. We want to apply Proposition~\ref{ana} to this Hamiltonian, and we observe that \eqref{condlin} implies \eqref{condana}: indeed, $Q \PS 1$ implies $T_j\nu_j \EP Q^{-1} \MP 1$, whereas 
\[ \varepsilon \MP \Delta_\omega(Q)^{-1} \EP (Q\Psi_\omega(Q))^{-1}\MP (T_jQ)^{-1} \EP \nu_j. \]
So we can apply Proposition~\ref{ana}: there exists a symplectic map $\Phi^{j+1} \in G^{\alpha,L_{j+1}}(\mathcal{D}_{R_{j+1}},\mathcal{D}_{R_j})$ such that 
\begin{eqnarray*}
( H\circ\Phi_j-f_j)\circ\Phi^{j+1} & = & l_{v_{j+1}}+s_{j+1}+[u_j]_{v_{j+1}}+u_j'+\tilde{u}_j \\
& = & l_\omega + [[f]_{v_1,\dots,v_j}+g_j]_{v_{j+1}}+u_j'+\tilde{u}_j \\
& = & l_\omega+[f]_{v_1,\dots,v_{j+1}}+[g_j]_{v_{j+1}}+u'_{j}+\tilde{u}_j
\end{eqnarray*}
with $\{u'_{j},l_{v_{j+1}}\}=0$ and the estimates
\[ |\Phi^{j+1}-\mathrm{Id}|_{\alpha,L_{j+1}} \MP T_{j+1}\nu_{j+1} \EP Q^{-1} \]
and
\[ |u'_{j}|_{\alpha,L_{j+1}} \MP \varepsilon Q^{-1}, \quad |\tilde{u}_j|_{\alpha,L_{j+1}} \MP \varepsilon \exp(-\cdot (T_{j+1}\nu_{j+1})^{-1/\alpha}) \EP \varepsilon \exp(-\cdot Q^{-1/\alpha}).  \]
Obviously, we have $|[g_j]_{v_{j+1}}|_{\alpha,L_j} \MP \varepsilon Q^{-1}$, so we set
\[ \Phi_{j+1}=\Phi_{j} \circ \Phi^{j+1}, \quad g_{j+1}=[g_{j}]_{v_{j+1}}+u'_j, \quad f_{j+1}=\tilde{u}_j+f_j\circ\Phi^{j+1}. \]
We have $\Phi_{j+1} \in G^{\alpha,L_{j+1}}(\mathcal{D}_{R_{j+1}},\mathcal{D}_{R_{j}})$, and since
\begin{equation}\label{eqide}
|\Phi^{j+1}-\mathrm{Id}|_{\alpha,L_{j+1}} \MP  Q^{-1} \MP 1,
\end{equation}
using Corollary~\ref{corcomp} we have the estimate
\[ |\Phi_{j+1}-\mathrm{Id}|_{\alpha,L_{j+1}} \leq |\Phi_j-\mathrm{Id}|_{\alpha,L_j}+|\Phi^{j+1}-\mathrm{Id}|_{\alpha,L_{j+1}} \MP Q^{-1}. \]
Now $\{g_j,l_{v_i}\}=0$ for $1 \leq i \leq j$ from the induction hypothesis, and obviously we have $\{[f]_{v_1,\dots,v_j},l_{v_i}\}=0$ for $1 \leq i \leq j$, hence $\{u_j,l_{v_i}\}=0$ for $1 \leq i \leq j$. Then if we apply Proposition~\ref{anaad} with $l_w=l_{v_i}$ for $1 \leq i \leq j$, we obtain that $\{u_j',l_{v_i}\}=0$ for $1 \leq i \leq j$. But $\{u_j',l_{v_{j+1}}\}=0$ and hence $\{u_j',l_{v_i}\}=0$ for $1 \leq i \leq j+1$. Moreover, we claim that $\{g_j,l_{v_i}\}=0$ for $1 \leq i \leq j$ implies $\{[g_{j}]_{v_{j+1}},l_{v_i}\}=0$ for $1 \leq i \leq j$, and as $\{[g_{j}]_{v_{j+1}},l_{v_{j+1}}\}=0$, this gives $\{[g_{j}]_{v_{j+1}},l_{v_i}\}=0$ for $1 \leq i \leq j+1$, and this eventually gives $\{g_{j+1},l_{v_i}\}=0$ for $1 \leq i \leq j+1$. To prove the claim, note that since $\{l_{v_i},l_{v_{j+1}}\}=0$ for $1\leq i \leq j$, $l_{v_i}=l_{v_i}\circ X_{v_{j+1}}^{t}$ and so
\begin{eqnarray*}
\{[g_j]_{v_{j+1}},l_{v_i}\} & = & \lim_{s\rightarrow +\infty}\frac{1}{s}\int_{0}^{s}\{g_j\circ X_{{v_{j+1}}}^{t},l_{v_i}\}dt \\
& = & \lim_{s\rightarrow +\infty}\frac{1}{s}\int_{0}^{s}\{g_j\circ X_{v_{j+1}}^{t},l_{v_i}\circ X_{v_{j+1}}^{t}\}dt \\
& = & \lim_{s\rightarrow +\infty}\frac{1}{s}\int_{0}^{s}\{g_j,l_{v_i}\} \circ X_{v_{j+1}}^{t}dt \\
& = & 0.
\end{eqnarray*} 
The estimate for $g_{j+1}$ is obvious:
\[ |g_{j+1}|_{\alpha,L_{j+1}} \leq |[g_j]_{v_{j+1}}|_{\alpha,L_j} + |u'_{j}|_{\alpha,L_{j+1}} \MP \varepsilon Q^{-1}.   \]
Finally, from Lemma~\ref{propcomp} and~\eqref{eqide}, we have
\[ |f_j\circ\Phi_{j+1}|_{\alpha,L_{j+1}} \leq |f_j|_{\alpha,L_j} \]
and therefore
\[ |f_{j+1}|_{\alpha,L_{j+1}} \leq |\tilde{u}_j|_{\alpha,L_{j+1}} + |f_j|_{\alpha,L_j} \MP \varepsilon \exp(-\cdot Q^{-1/\alpha}). \]  
This proves the claim, and ends the proof of the proposition.
\end{proof}

\paraga Let us now prove Theorem~\ref{thmnonlin}, which is an easy consequence of Theorem~\ref{thmlin}. As before, this will be deduced from the more flexible proposition below.

\begin{proposition}\label{propnonlin}
Let $H$ be as in \eqref{Hnonlin}, and $Q\geq 1$. If 
\begin{equation}\label{condnonlin}
Q \PS 1, \quad \varepsilon \leq r^2, \quad r \MP \Delta_\omega(Q)^{-1}, \quad r\leq R,
\end{equation}
then there exists a symplectic map $\Phi \in G^{\alpha}(\mathcal{D}_{r/2},\mathcal{D}_r)$, such that
\[ H \circ \Phi = h +[f]_\omega+ g + \tilde{f}, \quad \{g,l_\omega\}=0.  \]
Moreover, for any $l=(l_1,l_2)\in \N^{2n}$, we have the estimates 
\begin{equation*}
|\Phi_I-\mathrm{Id}_I|_{\alpha,\tilde{L},l} \MP  rr^{-|l_2|}Q^{-1}, \quad |\Phi_\theta-\mathrm{Id}_\theta|_{\alpha,\tilde{L},l} \MP r^{-|l_2|}Q^{-1}
\end{equation*}
and
\begin{equation*}
|g|_{\alpha,\tilde{L},l} \MP  r^2r^{-|l_2|}Q^{-1}, \quad |\tilde{f}|_{\alpha,\tilde{L},l}\MP r^2r^{-|l_2|}\exp\left(-\cdot Q^{-1/\alpha}\right).
\end{equation*}
\end{proposition}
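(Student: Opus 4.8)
The strategy is to deduce Proposition~\ref{propnonlin} from Proposition~\ref{proplin} by a rescaling of the action variables, the effective small parameter becoming $r$ (rather than $\varepsilon$) because of the non-linearity of $h$. We may assume $h(0)=0$. By Taylor's formula at $I=0$, $h(I)=\omega\cdot I+h_2(I)$ with $h_2(I)=\int_0^1(1-t)\nabla^2h(tI)(I,I)\,dt$ vanishing to second order. Introduce the dilation $\sigma:(\theta,I)\mapsto(\theta,rI)$, which maps $\mathcal{D}_1$ onto $\mathcal{D}_r$ (here $r\leq R$ is used) and satisfies $\sigma^*(d\theta\wedge dI)=r\,d\theta\wedge dI$; since rescaling the symplectic form by a positive constant changes neither the class of symplectic maps nor the vanishing of Poisson brackets, it is enough to put in normal form, on $\mathcal{D}_1$ with the standard structure, the Hamiltonian
\[ \tilde{H}(\theta,I):=r^{-1}(H\circ\sigma)(\theta,I)=\omega\cdot I+r^{-1}h_2(rI)+r^{-1}f(\theta,rI), \]
whose linear part is now exactly $\omega\cdot I$.

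First I would bound, in Gevrey norm on $\mathcal{D}_1$ and for a suitable $L'=C^{-1}L$ (the constant depending also on $R$ and $\alpha$, since the dilation $I\mapsto rI$ with $r\leq R$ costs a factor in the Gevrey constant), the two perturbative pieces of $\tilde{H}$. From $r^{-1}h_2(rI)=r\int_0^1(1-t)\nabla^2h(trI)(I,I)\,dt$, the appendix estimates on Gevrey norms of Taylor remainders and of products, and $|h|_{\alpha,L}\leq1$, $r\leq R$, one gets $\big|(\theta,I)\mapsto r^{-1}h_2(rI)\big|_{\alpha,L'}\MP r$; this piece depends on $I$ only, hence Poisson-commutes with $l_\omega$. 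For the angle-dependent piece, the dilation estimate and $B_r\subseteq B_R$ give $\big|(\theta,I)\mapsto r^{-1}f(\theta,rI)\big|_{\alpha,L'}\leq r^{-1}\varepsilon\leq r$ by the hypothesis $\varepsilon\leq r^2$. Thus $\tilde{H}$ is of the form~\eqref{Hlin} on $\mathcal{D}_1$ with a perturbation of size $\MP r$, and I would apply Proposition~\ref{proplin} to it with the same $Q$ and with ``$\varepsilon$'' replaced by that size: the requirement $Q\PS1$ is the first part of~\eqref{condnonlin}, and the smallness requirement becomes $r\MP\Delta_\omega(Q)^{-1}$, which is the third part of~\eqref{condnonlin}. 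This produces a symplectic $\Psi\in G^{\alpha,\tilde{L}}(\mathcal{D}_{1/2},\mathcal{D}_1)$, $\tilde{L}\EP L$, with
\[ \tilde{H}\circ\Psi(\theta,I)=\omega\cdot I+r^{-1}h_2(rI)+r^{-1}[f]_\omega(\theta,rI)+g_\flat(\theta,I)+f_\flat(\theta,I),\quad\{g_\flat,l_\omega\}=0, \]
the $\omega$-average of the angle-dependent piece being $(\theta,I)\mapsto r^{-1}[f]_\omega(\theta,rI)$ and the $I$-only piece passing through it unchanged, together with the estimates $|\Psi-\mathrm{Id}|_{\alpha,\tilde{L}}\MP Q^{-1}$, $|g_\flat|_{\alpha,\tilde{L}}\MP rQ^{-1}$ and $|f_\flat|_{\alpha,\tilde{L}}\MP r\exp(-\cdot Q^{-1/\alpha})$.

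It remains to transport everything back through $\sigma$. Set $\Phi:=\sigma\circ\Psi\circ\sigma^{-1}:\mathcal{D}_{r/2}\to\mathcal{D}_r$, which is symplectic by the remark above, and $g:=r(g_\flat\circ\sigma^{-1})$, $\tilde{f}:=r(f_\flat\circ\sigma^{-1})$. Since $H\circ\sigma=r\tilde{H}$, we have $H\circ\Phi=r\,(\tilde{H}\circ\Psi)\circ\sigma^{-1}$, and the definitions give $r\,\big((\theta,I)\mapsto\omega\cdot(r^{-1}I)+r^{-1}h_2(I)\big)=h$ and $r\,\big((\theta,I)\mapsto r^{-1}[f]_\omega(\theta,I)\big)=[f]_\omega$ — the averaging operator commuting with the $I$-dilation because $X_\omega^t$ acts only on $\theta$ — so that $H\circ\Phi=h+[f]_\omega+g+\tilde{f}$, with $\{g,l_\omega\}=0$ since $\sigma^{-1}$ intertwines the two symplectic structures and $l_\omega\circ\sigma^{-1}=r^{-1}l_\omega$. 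For the estimates, the one new ingredient is that $\sigma^{-1}(\theta,I)=(\theta,r^{-1}I)$ multiplies every $\partial_I$-derivative by $r^{-1}$, whence $|G\circ\sigma^{-1}|_{\alpha,\tilde{L},l}=r^{-|l_2|}|G|_{\alpha,\tilde{L},l}$ for any $G$ on $\mathcal{D}_{1/2}$ and any $l=(l_1,l_2)$; applying this to $g_\flat$, $f_\flat$ and to the two components of $\Psi-\mathrm{Id}$ (noting $\Phi_I(\theta,I)-I=r(\Psi_I-\mathrm{Id}_I)(\theta,r^{-1}I)$ and $\Phi_\theta(\theta,I)-\theta=(\Psi_\theta-\mathrm{Id}_\theta)(\theta,r^{-1}I)$) yields exactly the asserted bounds, the extra factor $r^2$ in the estimates for $g$ and $\tilde{f}$ being the prefactor $r$ times the size $\MP r$ of $g_\flat$ and $f_\flat$. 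Theorem~\ref{thmnonlin} then follows by choosing $Q=\Delta_\omega^*(\cdot r^{-1})$, just as Theorem~\ref{thmlin} was obtained from Proposition~\ref{proplin}.

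The main difficulty is bookkeeping rather than any single hard estimate. After rescaling, $\Phi$, $g$ and $\tilde{f}$ are no longer uniformly $(\alpha,\tilde{L})$-Gevrey — the dilation $\sigma^{-1}$ destroys the uniform control — so one must work throughout with the individual seminorms $|\,\cdot\,|_{\alpha,\tilde{L},l}$ and carefully propagate the asymmetric factors $r^{-|l_2|}$; and one must make sure that the Gevrey constant, which contracts by a fixed factor in Proposition~\ref{proplin} and again when estimating the Taylor remainder $h_2$ and the dilated perturbation $(\theta,I)\mapsto f(\theta,rI)$, can still be written as $CL$ with $C$ depending only on $n,R,\omega,\alpha,L$. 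The only genuinely technical point is controlling $(\theta,I)\mapsto r^{-1}h_2(rI)$ in Gevrey norm (not merely in $C^0$), which is where the appendix lemmas on Gevrey norms of Taylor remainders and of products of Gevrey functions are used.
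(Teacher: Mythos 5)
Your proof matches the paper's argument almost line for line: the same dilation $\sigma:(\theta,I)\mapsto(\theta,rI)$, the same rescaled Hamiltonian $r^{-1}(H\circ\sigma)$, the same second-order Taylor split of $h$, the same observation that $\varepsilon\leq r^2$ makes the rescaled perturbation of size $\MP r$, an application of Proposition~\ref{proplin} with $r$ in the role of $\varepsilon$, and the scaling back $\Phi=\sigma\circ\Psi\circ\sigma^{-1}$ with the factor $r^{-|l_2|}$ accounting for the asymmetric seminorm estimates. Your brief justification that $\sigma$ merely rescales the symplectic form (so that $\Phi$ is honestly symplectic and Poisson brackets vanish iff they vanish after scaling) is a slightly more explicit version of what the paper takes for granted, and your flag about the dilation possibly costing a factor in the Gevrey constant when $R>1$ is a legitimate but minor refinement the paper glosses over.
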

 
\begin{proof}[Proof of Theorem~\ref{thmnonlin}]
We choose 
\[ Q = \Delta_\omega^*(\cdot r^{-1}) \]
with a well-chosen implicit constant so that the third part of~\eqref{condnonlin} is satisfied. Proposition~\ref{propnonlin} with this value of $Q$ implies Theorem~\ref{thmnonlin}, as the other conditions of~\eqref{condnonlin} are satisfied by~\eqref{thr2}. 
\end{proof}

Now let us prove Proposition~\ref{propnonlin}.

\begin{proof}[Proof of Proposition~\ref{propnonlin}]
To analyze our Hamiltonian $H$ in the domain $\mathcal{D}_r$, which is a neighbourhood of size $r$ around the origin, we rescale the action variables using the map
\[ \sigma : (\theta,I) \longmapsto (\theta,rI) \]
which sends the domain $\mathcal{D}_{1}$ onto $\mathcal{D}_r$, the latter being included in $\mathcal{D}_R$ by the last part of~\eqref{condnonlin}. Let 
\[ H'=r^{-1}(H\circ\sigma)\] 
be the rescaled Hamiltonian, so $H'$ is defined on $\mathcal{D}_{1}$ and reads
\[ H'(\theta,I)=r^{-1}H(\theta,r I)=r^{-1}h(r I)+r^{-1}f(\theta,r I), \quad (\theta,I)\in\mathcal{D}_{1}. \]
Without loss of generality, we assume that $h(0)=0$. Now using Taylor's formula we can expand $h$ around the origin to obtain
\begin{equation*}
h(r I)  = r\omega \cdot I+r^2\int_{0}^{1}(1-t)\nabla^2 h(tr I)(I,I) dt=r\omega\cdot I+r^2 h'(I) 
\end{equation*}
and so we can write
\[ H'=l_\omega+f' \]
with 
\[ f'=r h'+r^{-1}(f\circ\sigma). \]
Now we know that $|f|_{\alpha,L}\leq\varepsilon$, so that the $|r^{-1}(f\circ\sigma)|_{\alpha,L}\leq r^{-1}\varepsilon$, and using the second part of~\eqref{condnonlin}, $|r^{-1}(f\circ\sigma)|_{\alpha,L}\leq r$. Moreover, applying Lemma~\ref{deriv} (with $s=2$) we have $|h'|_{\alpha,L/2}\MP |h|_{\alpha,L} \MP 1$, so that $|rh'|_{\alpha,L/2} \MP r$. This eventually gives $|f'|_{\alpha,L/2} \MP r$. Now we will apply Proposition~\ref{proplin} to the Hamiltonian $H'=l_\omega+f'$, defined on the domain $\mathcal{D}_1$ and such that $|f'|_{\alpha,L/2} \MP r$. The first and third part of condition~\eqref{condnonlin} imply condition~\eqref{condlin} with $1$ instead of $R$, $L/2$ instead of $L$ and $r$ instead of $\varepsilon$, so that there exists a symplectic map $\Phi' \in G^{\alpha,\tilde{L}}(\mathcal{D}_{1/2},\mathcal{D}_1)$, with $\tilde{L}\EP L$, such that
\[ H' \circ \Phi' = l_\omega +[f']_\omega+ g' + \tilde{f}', \quad \{g',l_\omega\}=0  \]
with the estimates 
\[ |\Phi'-\mathrm{Id}|_{\alpha,\tilde{L}} \MP Q^{-1}, \quad |g'|_{\alpha,\tilde{L}}\MP r Q^{-1}, \quad |\tilde{f}'|_{\alpha,\tilde{L}}\MP r \exp\left(-\cdot Q^{-1/\alpha}\right). \]
Note that $[f']_\omega=[rh'+r^{-1}(f\circ\sigma)]_\omega=rh'+r^{-1}[f]_\omega\circ\sigma$ so the transformed Hamiltonian can be written as
\begin{equation*}
H' \circ \Phi'=l_\omega+r h'+r^{-1}[f]_\omega\circ\sigma+g'+\tilde{f}'.
\end{equation*}
Now scaling back to our original coordinates, we define $\Phi=\sigma \circ \Phi' \circ \sigma^{-1}$, therefore $\Phi : \mathcal{D}_{r/2} \longrightarrow \mathcal{D}_{r}$ and 
\begin{eqnarray*}
H\circ\Phi & = & r H'\circ\Phi' \circ \sigma^{-1} \\
& = & r (l_\omega+r h'+r^{-1}[f]_\omega\circ\sigma+g'+\tilde{f}') \circ \sigma^{-1} \\
& = & (r l_\omega+r^2 h') \circ \sigma^{-1} +[f]_\omega+ r g'\circ \sigma^{-1} +r\tilde{f}'\circ \sigma^{-1}.  
\end{eqnarray*}
Observe that $(r l_\omega+r^2 h') \circ \sigma^{-1}=h$, so we may set
\[ g=rg'\circ \sigma^{-1}, \quad \tilde{f}=r\tilde{f}'\circ \sigma^{-1}, \]
and write
\[ H\circ\Phi=h+[f]_{\omega}+g+\tilde{f}. \]
The equality $\{g,l_\omega\}=0$ is obvious. Now let $l=(l_1,l_2)\in \N^{2n}$. Observe that from the definition of $\sigma$ and $g$,
\[ \partial^l g=\partial_\theta^{l_1}\partial_I^{l_2}g=rr^{-|l_2|}(\partial^lg')\circ \sigma^{-1}\] 
so
\[ |\partial^l g|_{C^0(\mathcal{D}_{r/2})} \leq rr^{-|l_2|} |\partial^l g'|_{C^0(\mathcal{D}_{\rho/2})} \]
and therefore
\[ |g|_{\alpha,\tilde{L},l} \leq rr^{-|l_2|}|g'|_{\alpha,\tilde{L},l} \MP r^2r^{-|l_2|} Q^{-1}.\]
Replacing $g'$ by $\tilde{f}'$ in the above argument, we obtain 
\[ |\tilde{f}|_{\alpha,\tilde{L},l} \leq rr^{-|l_2|}|\tilde{f}'|_{\alpha,\tilde{L},l} \MP r^2r^{-|l_2|}\exp\left(-\cdot Q^{-1/\alpha}\right).\]
Finally, writing $\Phi=(\Phi_\theta,\Phi_I)$ and $\Phi'=(\Phi'_\theta,\Phi'_I)$, we observe that $\Phi_\theta=\Phi_\theta' \circ \sigma^{-1}$ and $\Phi_I=r\Phi_I' \circ \sigma^{-1}$ which immediately gives the estimates for $\Phi_\theta$ and $\Phi_I$. This concludes the proof.
\end{proof}

\appendix

\section{Technical estimates}\label{app1}

In this section we recall some technical estimates concerning Gevrey functions, taken from \cite{MS02}, that are used in the proofs. 

Note that our Gevrey norm differs from the one in \cite{MS02}, where they used
\[ ||H||_{G^{\alpha,L}(\mathcal{D}_R)}=\sum_{l\in \N^{2n}}L^{|l|\alpha}(l!)^{-\alpha}|\partial^l H|_{C^{0}(\mathcal{D}_R)} < \infty  \]
instead of~\eqref{norm}. However these norms are ``almost equivalent" in the sense that obviously we have $|H|_{G^{\alpha,L}(\mathcal{D}_R)} \leq ||H||_{G^{\alpha,L}(\mathcal{D}_R)}$ while, for instance, $||H||_{G^{\alpha,L/2}(\mathcal{D}_R)} \leq 2^\alpha(2^\alpha-1)^{-1}|H|_{G^{\alpha,L}(\mathcal{D}_R)}$. So this change only affects the implicit constants.

First recall that our proof of Theorem~\ref{thmlin} proceed by induction on $1\leq j \leq d$, starting with the case $d=1$ which is exactly Proposition~\ref{ana}, and the transformation in Theorem~\ref{thmlin} is a composition of transformations given by Proposition~\ref{ana}. Therefore we shall have to estimate the Gevrey norm of a composition of functions, and unlike classical $C^0$ norms used in the analytic case, these estimates are not completely trivial.

\begin{lemma}\label{propcomp}
There exists a constant $C$ depending on $n, \alpha$ and $L$ such that for any $0<\delta<R$, if $F\in G^{\alpha,L}(\mathcal{D}_R,\R^{2n})$ and $\Phi\in G^{\alpha,L'}(\mathcal{D}_{R'},\mathcal{D}_R)$, with $L'=CL$ and $R'=R-\delta$, and if $|\Phi-\mathrm{Id}|_{\alpha,L'}\MP 1$, then $F\circ\Phi \in G^{\alpha,L'}(\mathcal{D}_{R'},\R^{2n})$ and we have the estimate
\[ |F\circ\Phi|_{\alpha,L'} \leq |F|_{\alpha,L}. \]
\end{lemma}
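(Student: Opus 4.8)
The plan is to prove Lemma~\ref{propcomp} by the standard Fa\`a di Bruno/multi-index bookkeeping for Gevrey classes, exploiting the crucial point that the target Gevrey constant $L'=CL$ is allowed to be much smaller than $L$ (a large constant $C$; note that because of our normalization convention $|\,\cdot\,|_{\alpha,L}$ decreases as $L$ increases, so enlarging $C$ shrinks the norm and buys room). First I would recall the two basic closure properties of the Gevrey norm: (i) the algebra/Leibniz estimate $|fg|_{\alpha,L}\MP|f|_{\alpha,L}|g|_{\alpha,L}$, which follows from the binomial identity $\partial^l(fg)=\sum_{m\le l}\binom{l}{m}\partial^m f\,\partial^{l-m}g$ together with the elementary inequality $\binom{l}{m}\le\frac{(l!)^\alpha}{(m!)^\alpha((l-m)!)^\alpha}\cdot 2^{|l|(\alpha-1)}$ absorbed into a change of the constant $L$; and (ii) the behaviour under substitution. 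In fact it is cleaner to prove the scalar statement first: if $f\in G^{\alpha,L}(\mathcal D_R)$ and $\Phi\in G^{\alpha,L'}(\mathcal D_{R'},\mathcal D_R)$ with $|\Phi-\mathrm{Id}|_{\alpha,L'}\MP1$, then $|f\circ\Phi|_{\alpha,L'}\le|f|_{\alpha,L}$; the vector-valued version is then immediate by taking the max over components.

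The heart of the matter is the Fa\`a di Bruno formula: for a multi-index $l\in\N^{2n}$ with $|l|=p\ge1$,
\[
\partial^l(f\circ\Phi)=\sum \frac{l!}{\prod_i (k_i!)\,(m_i!)^{|k_i|}}\,\bigl((\partial^{\sum_i k_i}f)\circ\Phi\bigr)\prod_i\bigl(\partial^{m_i}\Phi\bigr)^{k_i},
\]
the sum running over the partitions of the "slots'' of $l$ into groups, where $m_i$ are the distinct higher multi-indices used and $k_i$ their multiplicities. The strategy is to bound each factor: $|(\partial^{q}f)\circ\Phi|_{C^0}\le L^{-q\alpha}(q!)^\alpha|f|_{\alpha,L}$ with $q=|\sum k_i|$, and $|\partial^{m_i}\Phi|_{C^0}\le (L')^{-|m_i|\alpha}(m_i!)^\alpha\,|\Phi|_{\alpha,L'}$, where writing $\Phi=\mathrm{Id}+(\Phi-\mathrm{Id})$ and using $|\Phi-\mathrm{Id}|_{\alpha,L'}\MP1$ one gets $|\Phi|_{\alpha,L'}\MP1$ (the identity part only affects the derivatives of order $1$, trivially). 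Multiplying everything out, the factorials recombine — via the standard combinatorial identity underlying Gevrey stability, namely that the number of partitions and the factorial weights conspire so that $\sum\frac{l!}{\prod(k_i!)(m_i!)^{|k_i|}}(q!)^\alpha\prod(m_i!)^\alpha\le (\text{const})^{|l|}(l!)^\alpha$ for $\alpha\ge1$ — and the powers of $L$ combine to $L^{-q\alpha}(L')^{-(\sum|m_i|)|k_i|\alpha}$; since $\sum_i|m_i||k_i|\ge p$ and $q\ge0$, this is dominated by $(L')^{-p\alpha}$ up to a factor $(L/L')^{q\alpha}\le 1$ (here $L'<L$, i.e.\ $C$ large, is exactly what makes this $\le1$ rather than merely bounded). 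Collecting, $|\partial^l(f\circ\Phi)|_{C^0}\le (L')^{-|l|\alpha}(l!)^\alpha|f|_{\alpha,L}$ provided $C$ is chosen large enough (depending only on $n,\alpha$ and, through the absorption of the $\delta$-dependent Cauchy-type losses, on $L$ — though here since we work directly with derivative bounds rather than holomorphic extensions, the $\delta$ enters only through ensuring the image of $\Phi$ stays in $\mathcal D_R$), and dividing by $(L')^{-|l|\alpha}(l!)^\alpha$ and taking the supremum over $l$ gives $|f\circ\Phi|_{\alpha,L'}\le|f|_{\alpha,L}$.

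The main obstacle is the combinatorial step: verifying that the Fa\`a di Bruno weights, after inserting the factorial factors coming from each Gevrey bound, are bounded by a purely exponential factor $C_0^{|l|}(l!)^\alpha$ which can then be absorbed by choosing $C$ (hence $L'$) suitably. This is precisely the "Gevrey category is stable under composition'' lemma; the cleanest route is to reduce to the case $\alpha=1$ (where it is the analytic composition estimate, provable by Cauchy estimates on a complexified domain, and this is where $|\Phi-\mathrm{Id}|$ small is used to guarantee $\Phi(\mathcal D_{R'})\subseteq\mathcal D_R$ with room to spare) and then note that replacing all derivative bounds of order $k$ by their $\alpha$-th powers only worsens constants by $2^{k(\alpha-1)}$-type factors, again absorbable into $C$. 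Since the statement is explicitly quoted from \cite{MS02}, in the write-up I would either cite it directly or sketch only this reduction, emphasizing that $C$ depends on $n,\alpha,L$ and that the smallness hypothesis on $\Phi-\mathrm{Id}$ is used solely to keep the composition well-defined and to bound $|\Phi|_{\alpha,L'}\MP1$.
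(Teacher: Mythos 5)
Your strategy --- multivariate Fa\`a di Bruno together with a combinatorial estimate encoding the stability of Gevrey classes under composition --- is the natural route to this result. Note, however, that the paper itself supplies no proof of Lemma~\ref{propcomp}: it simply cites \cite{MS02} (Corollary A.1, Appendix A.2) for the scalar case and remarks that the vector-valued case follows componentwise, so there is no ``paper's proof'' to compare against; you are attempting to reprove the cited result. Your attempt contains a genuine error in the direction of the Gevrey parameter, and it is not a typo: it recurs and is used in the argument. With the paper's normalization \eqref{norm}, $|f|_{\alpha,L,l}=L^{|l|\alpha}(l!)^{-\alpha}|\partial^l f|_{C^0}$ carries a \emph{positive} power of $L$, so for fixed $f$ the norm $|f|_{\alpha,L}$ is \emph{increasing} in $L$ (the paper makes this explicit in Appendix~\ref{app1} when comparing $\|\cdot\|_{\alpha,L/2}$ to $|\cdot|_{\alpha,L}$). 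Consequently the constant $C$ must be \emph{small}, $C<1$, so that $L'=CL<L$ and the target norm is the weaker one. Your remarks ``a large constant $C$'', ``$|\cdot|_{\alpha,L}$ decreases as $L$ increases'', ``enlarging $C$ shrinks the norm'', and ``$L'<L$, i.e.\ $C$ large'' are all either wrong or mutually contradictory, and the step $(L/L')^{q\alpha}\le 1$ is \emph{false} when $L'<L$: it requires $L'\ge L$. Followed literally, this bookkeeping cannot close the estimate.

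Beyond the sign confusion, the heart of the matter --- the claim that the Fa\`a di Bruno weights, once multiplied by the $\alpha$-th powers of factorials coming from the Gevrey bounds, are controlled by $C_0^{|l|}(l!)^\alpha$, so that the exponential excess can be absorbed by choosing the ratio $L'/L$ and the smallness constant in $|\Phi-\mathrm{Id}|_{\alpha,L'}\MP 1$ small enough (this is also where the $\delta$-dependence enters, as the paper notes just after the lemma) --- is asserted but not proved. That is acceptable if your intention is simply to cite \cite{MS02}, as the paper does, but then the Fa\`a di Bruno discussion is superfluous. The proposed shortcut of ``reducing to $\alpha=1$ by Cauchy estimates on a complexification, then raising factorial bounds to the $\alpha$-th power'' does not work as stated: a Gevrey-$\alpha$ function with $\alpha>1$ admits no holomorphic extension, and the factorial weights inside the Fa\`a di Bruno sum do not factor through the analytic case in any evident way; that step would require its own, essentially independent, combinatorial argument.
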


This lemma is contained in the statement of Corollary A.1, Appendix A.2, in \cite{MS02}, in the case where $F$ is a real-valued function, but it extends immediately to vector-valued functions, by applying it components by components. The implicit constant also depends on $\delta$, but the statement will be used for a value of $\delta$ depending only on $d$ and $R$. 

Here's a direct consequence that we will also use.

\begin{corollary}\label{corcomp}
There exists a constant $C$ depending on $n, \alpha$ and $L$ such that for any $0<\delta<R$, if $\Psi\in G^{\alpha,L}(\mathcal{D}_R,\R^{2n})$ and $\Phi\in G^{\alpha,L'}(\mathcal{D}_{R'},\mathcal{D}_R)$, with $L'=CL$ and $R'=R-\delta$, and if $|\Phi-\mathrm{Id}|_{\alpha,L'}\MP 1$, then $\Psi\circ\Phi \in G^{\alpha,L'}(\mathcal{D}_{R'},\R^{2n})$ and we have the estimate
\[ |\Psi\circ\Phi-\mathrm{Id}|_{\alpha,L'} \leq |\Psi-\mathrm{Id}|_{\alpha,L}+|\Phi-\mathrm{Id}|_{\alpha,L'}. \]
\end{corollary}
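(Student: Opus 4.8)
The plan is to derive the corollary from Lemma~\ref{propcomp} by pulling out the identity before composing. First I would record the pointwise identity
\[ \Psi\circ\Phi - \mathrm{Id} = (\Psi - \mathrm{Id})\circ\Phi + (\Phi - \mathrm{Id}), \]
which is immediate since $\mathrm{Id}\circ\Phi = \Phi$. As elsewhere in the paper, the three differences appearing here are understood as $\R^{2n}$-valued functions, the angle components being formed via the canonical near-identity lift, so that in particular $\Psi - \mathrm{Id} \in G^{\alpha,L}(\mathcal{D}_R,\R^{2n})$; this is precisely the kind of object whose Gevrey norm the hypothesis and the conclusion of the corollary refer to.

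Next I would apply Lemma~\ref{propcomp} with $F := \Psi - \mathrm{Id} \in G^{\alpha,L}(\mathcal{D}_R,\R^{2n})$ and the given $\Phi \in G^{\alpha,L'}(\mathcal{D}_{R'},\mathcal{D}_R)$, where $L' = CL$ and $R' = R - \delta$: since by hypothesis $|\Phi - \mathrm{Id}|_{\alpha,L'} \MP 1$, the lemma yields $(\Psi - \mathrm{Id})\circ\Phi \in G^{\alpha,L'}(\mathcal{D}_{R'},\R^{2n})$ together with
\[ |(\Psi - \mathrm{Id})\circ\Phi|_{\alpha,L'} \leq |\Psi - \mathrm{Id}|_{\alpha,L}. \]
The constant $C$ is simply the one furnished by Lemma~\ref{propcomp} for the value of $\delta$ being used (which in the applications depends only on $d$ and $R$).

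Finally I would combine the two pieces. By hypothesis $\Phi - \mathrm{Id} \in G^{\alpha,L'}(\mathcal{D}_{R'},\R^{2n})$, and by the previous step so is $(\Psi - \mathrm{Id})\circ\Phi$; hence the displayed identity shows $\Psi\circ\Phi - \mathrm{Id} \in G^{\alpha,L'}(\mathcal{D}_{R'},\R^{2n})$, that is, $\Psi\circ\Phi \in G^{\alpha,L'}(\mathcal{D}_{R'},\R^{2n})$. Moreover, using that $|\,.\,|_{\alpha,L'}$ is a norm, hence subadditive, together with the bound from Lemma~\ref{propcomp},
\[ |\Psi\circ\Phi - \mathrm{Id}|_{\alpha,L'} \leq |(\Psi - \mathrm{Id})\circ\Phi|_{\alpha,L'} + |\Phi - \mathrm{Id}|_{\alpha,L'} \leq |\Psi - \mathrm{Id}|_{\alpha,L} + |\Phi - \mathrm{Id}|_{\alpha,L'}, \]
which is exactly the asserted estimate. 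I do not expect any real obstacle here: the argument is just the triangle inequality layered on top of Lemma~\ref{propcomp}, and the only point that calls for a word of care is the bookkeeping around the meaning of $\mathrm{Id}$ and of the differences-from-identity, which is already fixed by the conventions in force earlier in the paper.
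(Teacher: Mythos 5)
Your argument is correct and is exactly the ``direct consequence'' the paper has in mind (the paper leaves the proof implicit): decompose $\Psi\circ\Phi-\mathrm{Id}=(\Psi-\mathrm{Id})\circ\Phi+(\Phi-\mathrm{Id})$, apply Lemma~\ref{propcomp} to $F=\Psi-\mathrm{Id}$, and use subadditivity of the norm.
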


Let us also state a straightforward lemma which says that the Gevrey norm of a function controls the Gevrey norm of its derivatives, provided we restrict the parameter $L$ (in the statement below, we simply choose $L/2$). 

\begin{lemma}\label{deriv} 
Let $f\in G^{\alpha,L}(\mathcal{D}_R)$. For any $s \in \N$, there exists a constant $c$ depending on $s,\alpha$ and $L$ such that
\[ \sup_{l\in\N^{2n},\,|l|=s}|\partial^l f|_{\alpha,L/2}\leq c|f|_{\alpha,L}. \]
\end{lemma}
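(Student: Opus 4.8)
The plan is to unwind both Gevrey norms and reduce everything to an elementary multi-index estimate. Fix a multi-index $m\in\N^{2n}$ with $|m|=s$. By the definition~\eqref{norm} of the norm $|\,\cdot\,|_{\alpha,L/2}$, and since $\partial^k\partial^m f=\partial^{k+m}f$,
\[ |\partial^m f|_{\alpha,L/2}=\sup_{k\in\N^{2n}}(L/2)^{|k|\alpha}(k!)^{-\alpha}\,|\partial^{k+m}f|_{C^0(\mathcal{D}_R)}. \]
On the other hand, the definition of $|f|_{\alpha,L}$ gives, for every $k$,
\[ |\partial^{k+m}f|_{C^0(\mathcal{D}_R)}\leq L^{-|k+m|\alpha}\bigl((k+m)!\bigr)^{\alpha}\,|f|_{\alpha,L}. \]
Substituting this into the previous display, the powers of $L$ combine to $(L/2)^{|k|\alpha}L^{-|k+m|\alpha}=2^{-|k|\alpha}L^{-s\alpha}$ and the factorials to $\bigl((k+m)!/k!\bigr)^{\alpha}$.

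The only non-routine ingredient is then the classical multi-index inequality
\[ \frac{(k+m)!}{k!}=m!\binom{k+m}{k}\leq m!\,2^{|k|+|m|}, \]
which follows componentwise from $\binom{k_i+m_i}{k_i}\leq 2^{k_i+m_i}$. Plugging this in, the factor $2^{-|k|\alpha}$ coming from having replaced $L$ by $L/2$ exactly cancels the factor $2^{|k|\alpha}$ produced by the binomial bound, so that
\[ (L/2)^{|k|\alpha}(k!)^{-\alpha}\,|\partial^{k+m}f|_{C^0(\mathcal{D}_R)}\leq \bigl(2^{s}L^{-s}\,m!\bigr)^{\alpha}\,|f|_{\alpha,L} \]
uniformly in $k$. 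Taking the supremum over $k$, then over all $m$ with $|m|=s$, and using $m!\leq s!$, yields the statement with $c=\bigl(2^{s}s!\,L^{-s}\bigr)^{\alpha}$, which depends only on $s$, $\alpha$ and $L$.

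There is essentially no obstacle here: the argument is a two-line bookkeeping once the multi-index binomial bound is in place. The one point worth emphasising is that shrinking $L$ to $L/2$ (indeed, to any $L'<L$) is exactly what makes the estimate work, since it provides the geometric decay in $|k|$ needed to absorb the growth of $(k+m)!/k!$; this is why the statement fails for $L'=L$.
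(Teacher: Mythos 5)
Your proof is correct, and since the paper gives no proof of this lemma (it is described only as ``straightforward''), there is nothing to compare it against except the obvious canonical argument — which is exactly what you wrote. Every step checks out: the reindexing $\partial^k\partial^m=\partial^{k+m}$, the cancellation of the $L$-powers leaving $2^{-|k|\alpha}L^{-s\alpha}$, the multi-index identity $(k+m)!/k!=m!\binom{k+m}{k}$, the componentwise bound $\binom{k_i+m_i}{k_i}\leq 2^{k_i+m_i}$, the exact cancellation of $2^{\pm|k|\alpha}$, and finally $m!\leq s!$ for $|m|=s$. The closing remark about why shrinking $L$ is needed (to generate the geometric factor that absorbs the binomial growth) is exactly the right thing to emphasise; the same mechanism shows that the lemma holds with $L/2$ replaced by any $\theta L$ with $\theta<1$, at the cost of a constant $\left((1-\theta)^{-1}\right)$-type dependence hidden in the binomial bound if one optimises. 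No gap.
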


\addcontentsline{toc}{section}{References}
\bibliographystyle{amsalpha}
\bibliography{GNFG}

\end{document}